\definecolor{darkgreen}{rgb}{0,0.45,0} 
\let\setof\Set
\title{Univalence for inverse EI diagrams}
\author{Michael Shulman}
\email{shulman@sandiego.edu}
\address{University of San Diego,
  5998 Alcala Park,
  San Diego, CA 92110,
  USA}
\thanks{This material is based on research sponsored by The United States Air Force Research Laboratory under agreement number FA9550-15-1-0053.  The U.S.~Government is authorized to reproduce and distribute reprints for Governmental purposes notwithstanding any copyright notation thereon.  The views and conclusions contained herein are those of the author and should not be interpreted as necessarily representing the official policies or endorsements, either expressed or implied, of the United States Air Force Research Laboratory, the U.S.~Government, or Carnegie Mellon University.}
\keywords{homotopy type theory, univalence axiom, inverse categories, EI-categories}
\def\defthm#1#2#3{%
  \newaliascnt{#1}{thm}
  \newtheorem{#1}[#1]{#2}
  \aliascntresetthe{#1}
  \crefname{#1}{#2}{#3}
}
\newtheorem{thm}{Theorem}[section]
\crefname{thm}{Theorem}{Theorems}
\theoremstyle{definition}
\theoremstyle{remark}
\crefname{figure}{Figure}{Figures}
\let\c@equation\c@thm
\numberwithin{equation}{section}
\let\ea\expandafter
\def\mdef#1#2{\ea\ea\ea\gdef\ea\ea\noexpand#1\ea{\ea\ensuremath\ea{#2}\xspace}}
\mdef\C{\mathscr{C}}
\let\sC\C
\mdef\D{\mathscr{D}}
\mdef\I{\mathcal{I}}
\mdef\J{\mathcal{J}}
\mdef\K{\mathcal{K}}
\mdef\L{\mathcal{L}}
\mdef\M{\mathcal{M}}
\mdef\N{\mathcal{N}}
\mdef\cZ{\mathcal{Z}}
\mdef\lD{\mathbb{D}}
\mdef\lC{\mathbb{C}}
\mdef\lR{\mathbb{R}}
\mdef\lZ{\mathbb{Z}}
\let\Gm\Gamma
\let\De\Delta
\def\fId{\mathsf{Id}}
\def\nInv{\mathrm{Inv}}
\mdef\cCat{\mathcal{C}\mathit{at}}
\mdef\sSet{\mathrm{sSet}}
\mdef\ssSet{\mathrm{ssSet}}
\mdef\lsSet{\mathrm{s}\mathbb{S}\mathrm{et}}
\mdef\Ibar{\overline{\I}}
\mdef\ttfc{\mathrm{TTFC}_s}
\mdef\cinv{\C\text{-}\nInv}
\mdef\cinvs{\C\text{-}\nInv^S}
\mdef\ssetinv{\sSet\text{-}\nInv}
\mdef\ssetinvs{\sSet\text{-}\nInv^S}
\def\f{_{\mathbf{f}}}
\newcommand{\io}{\ensuremath{(\infty,1)}}
\newcommand{\ig}{\ensuremath{\infty\text{-}\mathrm{Gpd}}\xspace}
\newcommand{\OG}{\ensuremath{\mathcal{O}_G}\xspace}
\newcommand{\blank}{\mathord{\hspace{1pt}\text{--}\hspace{1pt}}}
\newcommand{\strsl}[2]{#1 \mathord{\sslash} #2}
\mdef\ty{\;\mathsf{type}}
\def\r#1{\mathsf{refl}_{#1}}
\mdef\unit{\mathbf{1}}
\mdef\bool{\mathbf{2}}
\mdef\btrue{\mathsf{t}}
\mdef\bfalse{\mathsf{f}}
\let\dn\downarrow
\newcommand{\op}{^{\mathrm{op}}}
\newcommand{\pullback}[1][dr]{\save*!/#1-1.2pc/#1:(-1,1)@^{|-}\restore}
\def\oo{\ensuremath{\infty}}
\newcommand{\too}[1][]{\ensuremath{\overset{#1}{\longrightarrow}}}
\newcommand{\ot}{\ensuremath{\leftarrow}}
\let\into\hookrightarrow
\let\xto\xrightarrow
\def\toiso{\xto{\smash{\raisebox{-.5mm}{$\scriptstyle\sim$}}}}
\def\prd#1{\@ifnextchar\bgroup{\prd@parens{#1}}{%
    \@ifnextchar\sm{\prd@parens{#1}\@eatsm}{%
    \@ifnextchar\prd{\prd@parens{#1}\@eatprd}{%
    \@ifnextchar\;{\prd@parens{#1}\@eatsemicolonspace}{%
    \@ifnextchar\\{\prd@parens{#1}\@eatlinebreak}{%
    \@ifnextchar\narrowbreak{\prd@parens{#1}\@eatnarrowbreak}{%
      \prd@noparens{#1}}}}}}}}
\def\prd@parens#1{\@ifnextchar\bgroup%
  {\mathchoice{\@dprd{#1}}{\@tprd{#1}}{\@tprd{#1}}{\@tprd{#1}}\prd@parens}%
  {\@ifnextchar\sm%
    {\mathchoice{\@dprd{#1}}{\@tprd{#1}}{\@tprd{#1}}{\@tprd{#1}}\@eatsm}%
    {\mathchoice{\@dprd{#1}}{\@tprd{#1}}{\@tprd{#1}}{\@tprd{#1}}}}}
\def\@eatsm\sm{\sm@parens}
\def\prd@noparens#1{\mathchoice{\@dprd@noparens{#1}}{\@tprd{#1}}{\@tprd{#1}}{\@tprd{#1}}}
\def\lprd#1{\@ifnextchar\bgroup{\@lprd{#1}\lprd}{\@@lprd{#1}}}
\def\@lprd#1{\mathchoice{{\textstyle\prod}}{\prod}{\prod}{\prod}({\textstyle #1})\;}
\def\@@lprd#1{\mathchoice{{\textstyle\prod}}{\prod}{\prod}{\prod}({\textstyle #1}),\ }
\def\tprd#1{\@tprd{#1}\@ifnextchar\bgroup{\tprd}{}}
\def\@tprd#1{\mathchoice{{\textstyle\prod_{(#1)}}}{\prod_{(#1)}}{\prod_{(#1)}}{\prod_{(#1)}}}
\def\dprd#1{\@dprd{#1}\@ifnextchar\bgroup{\dprd}{}}
\def\@dprd#1{\prod_{(#1)}\,}
\def\@dprd@noparens#1{\prod_{#1}\,}
\def\@eatnarrowbreak\narrowbreak{%
  \@ifnextchar\prd{\narrowbreak\@eatprd}{%
    \@ifnextchar\sm{\narrowbreak\@eatsm}{%
      \narrowbreak}}}
\def\@eatlinebreak\\{%
  \@ifnextchar\prd{\\\@eatprd}{%
    \@ifnextchar\sm{\\\@eatsm}{%
      \\}}}
\def\@eatsemicolonspace\;{%
  \@ifnextchar\prd{\;\@eatprd}{%
    \@ifnextchar\sm{\;\@eatsm}{%
      \;}}}
\def\sm#1{\@ifnextchar\bgroup{\sm@parens{#1}}{%
    \@ifnextchar\prd{\sm@parens{#1}\@eatprd}{%
    \@ifnextchar\sm{\sm@parens{#1}\@eatsm}{%
    \@ifnextchar\;{\sm@parens{#1}\@eatsemicolonspace}{%
    \@ifnextchar\\{\sm@parens{#1}\@eatlinebreak}{%
    \@ifnextchar\narrowbreak{\sm@parens{#1}\@eatnarrowbreak}{%
        \sm@noparens{#1}}}}}}}}
\def\sm@parens#1{\@ifnextchar\bgroup%
  {\mathchoice{\@dsm{#1}}{\@tsm{#1}}{\@tsm{#1}}{\@tsm{#1}}\sm@parens}%
  {\@ifnextchar\prd%
    {\mathchoice{\@dsm{#1}}{\@tsm{#1}}{\@tsm{#1}}{\@tsm{#1}}\@eatprd}%
    {\mathchoice{\@dsm{#1}}{\@tsm{#1}}{\@tsm{#1}}{\@tsm{#1}}}}}
\def\@eatprd\prd{\prd@parens}
\def\sm@noparens#1{\mathchoice{\@dsm@noparens{#1}}{\@tsm{#1}}{\@tsm{#1}}{\@tsm{#1}}}
\def\lsm#1{\@ifnextchar\bgroup{\@lsm{#1}\lsm}{\@@lsm{#1}}}
\def\@lsm#1{\mathchoice{{\textstyle\sum}}{\sum}{\sum}{\sum}({\textstyle #1})\;}
\def\@@lsm#1{\mathchoice{{\textstyle\sum}}{\sum}{\sum}{\sum}({\textstyle #1}),\ }
\def\tsm#1{\@tsm{#1}\@ifnextchar\bgroup{\tsm}{}}
\def\@tsm#1{\mathchoice{{\textstyle\sum_{(#1)}}}{\sum_{(#1)}}{\sum_{(#1)}}{\sum_{(#1)}}}
\def\dsm#1{\@dsm{#1}\@ifnextchar\bgroup{\dsm}{}}
\def\@dsm#1{\sum_{(#1)}\,}
\def\@dsm@noparens#1{\sum_{#1}\,}
\begin{document}

\begin{abstract}
  We construct a new model category presenting the homotopy theory of presheaves on ``inverse EI $(\infty,1)$-categories'', which contains universe objects that satisfy Voevodsky's univalence axiom.
  In addition to diagrams on ordinary inverse categories, as considered in previous work of the author, this includes a new model for equivariant algebraic topology with a compact Lie group of equivariance.
  Thus, it offers the potential for applications of homotopy type theory to equivariant homotopy theory.
\end{abstract}


\maketitle

\section{Introduction}
\label{sec:introduction}

\emph{Homotopy type theory}~\cite{hottbook} is a recent subject that synthesizes intensional constructive type theory with homotopy theory.
Among other things, it offers the possibility of using type theory as a ``formal syntax'' for proving homotopy-theoretic theorems, which would apply automatically to any ``homotopy theory'' or \oo-topos~\cite{lurie:higher-topoi,rezk:homotopy-toposes}.
One potential advantage of this over other abstract languages for homotopy theory is that it talks concretely about points and paths, which are then ``compiled'' by an interpretation theorem to diagrammatic arguments.
It also makes available different technical tools, notably \textbf{higher inductive types} (a formal language for cell complexes that avoids small object arguments) and Voevodsky's \textbf{univalence axiom}.

Here we study univalence, which provides a \emph{classifying space for all (small) spaces} (or ``object classifier''~\cite{lurie:higher-topoi}) whose points are \emph{literally} spaces.
Thus, we can work ``representably'' without passing back and forth across equivalences.
For example, defining ``a spectrum'' in type theory automatically defines \emph{the space of spectra}, and thereby also a notion of ``parametrized spectrum'' (a map into the space of spectra).

Together, higher inductive types and univalence enable ``synthetic homotopy theory''; see~\cite{ls:pi1s1,lb:pinsn,lf:emspaces,brunerie:thesis,ffll:blakers-massey} and~\cite[Chapter 8]{hottbook}.
These proofs, written in an intuitive language that involves points and paths, nevertheless ``compile'' automatically into any suitable homotopy theory.
Notably,~\cite{ffll:blakers-massey} was the first purely homotopy-theoretic proof of Blakers--Massey that applies (in principle) to any \oo-topos; afterwards it was translated back into \oo-categorical language~\cite{rezk:hott-blakersmassey}.

However, there is presently a gap in this picture: not all \oo-toposes are known to model univalence in its usual form.\footnote{They do model a less convenient version of it that probably suffices for most applications.}
By~\cite{klv:ssetmodel}, the archetypical \oo-topos of \oo-groupoids does model univalence, and by~\cite{shulman:invdia,shulman:elreedy,cisinski:elegant} so do presheaf \oo-toposes on elegant Reedy categories~\cite{br:reedy}.\footnote{To be precise, all of these models also depend on an ``initiality theorem'', which is known for some type theories~\cite{streicher:semtt} and expected to generalize to all of them.\label{fn:initiality}}
Univalence also passes to slice categories, yielding parametrized homotopy theories; but many important examples are still missing from the list, notably including equivariant homotopy theory.

In this paper I will generalize the univalent models of~\cite{shulman:invdia} to include classical equivariant homotopy theory over a compact Lie group.
Therefore, synthetic homotopy theory applies to equivariant (parametrized) homotopy theory, without modifying the univalence axiom.%
\footnote{See also the parallel line of investigation due to Bordg~\cite{bordg:thesis}.}
By~\cite{elmendorf:theorem}, $G$-equivariant homotopy theory is equivalent to the \oo-topos of diagrams on the orbit category $\OG\op$.
If $G$ is compact Lie, $\OG\op$ is an \emph{inverse EI \io-category}: every endomorphism is an equivalence and the relation ``there is a noninvertible map $y\to x$'' is well-founded.
I will show that type theory with univalence is modeled by the \oo-topos of diagrams on any inverse EI \io-category.

On one hand, this construction is a generalization of~\cite{shulman:invdia} that internalizes in an \io-category.
An ordinary inverse category contains no nonidentity automorphisms, and this remains true for ``internal inverse categories''; but in the latter case there can nevertheless be nontrivial automorphisms ``hidden'' in the \emph{space of objects}.

On the other hand, this construction is also an \emph{iteration} of the ``gluing construction'' (i.e.\ comma categories) from~\cite{shulman:invdia}.
As described in~\cite{shulman:reedy}, inverse diagrams can be obtained by iterated gluing along ``matching object'' functors; here we generalize by gluing along hom-functors of internal categories rather than ordinary ones.

In \cref{sec:preliminaries} we recall basic facts about indexed categories and well-founded recursion.
In \cref{sec:category-theory} we study ``internal inverse categories'' in a general context that can be specialized both to type theory and homotopy theory.
In \crefrange{sec:model}{sec:homotopy-theory} we specialize to homotopy theory, identifying diagrams on such internal categories with previously known models for \oo-toposes of diagrams.
Then in \cref{sec:type-theory} we specialize instead to type theory, proving that our internal diagram categories admit models of homotopy type theory with univalence.
Finally, in \cref{sec:fibrant-categories} we discuss some examples, including equivariant homotopy theory.

There is actually no type theory as such in the main parts of this paper.
We do not even need the statement of univalence, relying instead on the gluing theorem from~\cite{shulman:invdia}.
Type-theoretic syntax will appear only in \cref{sec:fibrant-categories}.
Some familiarity with Quillen model categories and \io-categories is necessary for \crefrange{sec:model}{sec:homotopy-theory}.

I would like to thank Jaap van Oosten for writing~\cite{oosten:functors-wfr} so I could cite it, Geoffroy Horel for several useful conversations about~\cite{horel:model-intsscat}, Pedro Boavida de Brito for sharing an early draft of~\cite{pbb:groth-segal}, and the referee for helpful suggestions on exposition.

\newpage
\section{Preliminaries}
\label{sec:preliminaries}

\subsection{Indexed categories}
\label{sec:indexed-categories}

If \C is any category, a \textbf{\C-indexed category} is a pseudofunctor $\lD:\C\op\to\cCat$, written $X\mapsto \lD^X$ on objects and $f\mapsto f^*$ on morphisms.
A good modern reference is~\cite[Part B]{ptj:elephant1}.
We think of objects of $\lD^X$ as ``$X$-indexed families of objects of \lD'', allowing us to ``do category theory with \lD'' treating \C like the category of sets.
For instance, the following standard definition expresses ``local smallness''.

\begin{defn}\label{defn:locfib}
  Given 
  $A\in \lD^X$ and $B\in \lD^Y$, if the functor
  \begin{align*}
    \C/(X\times Y)\op &\to \mathrm{Set}\\
    ((p,q):Z\to X\times Y) &\mapsto \lD^Z(p^*A,q^*B)
  \end{align*}
  is representable, we denote its representing object by $\lD(A,B) \to X\times Y$. 
\end{defn}

If all such objects and their pullbacks exist, we get associative and unital maps $\lD(A,B) \times_Y \lD(B,C) \to \lD(A,C)$.
In particular, $\lD^1$ is enriched over \C.

\subsection{Well-founded recursion}
\label{sec:wf}

Recall that a relation $\prec$ on a set $I$ is \textbf{well-founded} if the only subset $A\subseteq I$ with the property that $x\in A$ as soon as $y\in A$ for all $y\prec x$ is $I$ itself.
Classically, this is equivalent to the nonexistence of infinite decreasing chains $x_0 \succ x_1 \succ x_2 \succ x_3 \succ \cdots$.

Our well-founded relations will always be \emph{transitive}.
If we define $x\preceq y$ to mean ``$x\prec y$ or $x=y$'', then $\preceq$ is a partial order, which we call a \emph{well-founded poset}.
Since the poset $I$ is a category, it has slice categories such as $I/x$, which is the full sub-poset of $y\in I$ such that $y\preceq x$.
We write $\strsl I x$ for the full sub-poset of $y\in I$ such that $y\prec x$.

If $\prec$ is well-founded and $P(x)$ holds for any $x\in I$ if it holds for all $y\prec x$, then $P(x)$ holds for all $x\in I$.
Similarly, if $F$ assigns to any $x\in I$ and any $g_x:\strsl I x \to Z$ an element of $Z$, there is a unique $g:I\to Z$ with $g(x) = F(x,g|_{\strsl I x})$ for all $x\in I$.
We will also define \emph{functors} by recursion, as in~\cite{oosten:functors-wfr}; the following is an easy generalization.

\begin{thm}\label{thm:wf}
  Let $I$ be a well-founded poset and \cZ be a category with a functor $\Phi:\cZ\to I$.
  Let $F$ be a function which assigns to any $x\in I$ and partial section $G_x:\strsl I x \to \cZ$ of $\Phi$, a cocone under $G_x$ lying $\Phi$-over the canonical cocone under $\strsl I x \into I$ with vertex $x$ (in other words, an extension of $G_x$ to a partial section defined on $I/x$).
  Then there exists a unique section $G:I\to \cZ$ of $\Phi$ such that 
  \begin{enumerate}
  \item For every $x\in I$, $G(x)$ is the vertex of $F(x,G|_{\strsl I x})$, and\label{item:wf1}
  \item For every $y\prec x$, $G(y\prec x)$ is the component of $F(x,G|_{\strsl I x})$ at $y$.\label{item:wf2}
  \end{enumerate}
\end{thm}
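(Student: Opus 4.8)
The plan is to reduce the construction of the functor $G$ to the element-level well-founded recursion recalled just above, applied not to the objects of $I$ directly but to the \emph{closed slices} $I/x$. Concretely, I would use well-founded recursion to construct, for each $x\in I$, a partial section $H_x\colon I/x\to\cZ$ of $\Phi$, subject to the coherence invariant that $H_x|_{I/y}=H_y$ whenever $y\preceq x$. The desired section $G$ is then obtained by gluing: set $G(x):=H_x(x)$ on objects and $G(y\prec x):=H_x(y\prec x)$ on morphisms.

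For the recursive step, suppose $H_y$ has been defined for all $y\prec x$ and that these satisfy the invariant among themselves. First I would assemble them into a single functor $G_x\colon\strsl I x\to\cZ$ by setting $G_x(z):=H_z(z)$ and $G_x(z'\prec z):=H_z(z'\prec z)$; functoriality of $G_x$ is exactly where the coherence invariant is needed, since preservation of composition along $z''\prec z'\prec z$ reduces, via the identity $H_{z'}=H_z|_{I/z'}$, to functoriality of the single functor $H_z$. I would then set $H_x:=F(x,G_x)$, regarded as an extension of $G_x$ to a partial section on $I/x$ — which is precisely the data that $F$ provides. The invariant $H_x|_{I/y}=H_y$ for $y\prec x$ then follows because $H_x$ restricts to $G_x$ on $\strsl I x$, while $G_x|_{I/y}=H_y$ by construction together with the inductive invariant. (The minimal elements are handled uniformly by this step, the hypothesis family being empty.)

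With the coherent family in hand, I would check that $G$ as defined above is genuinely a functor and a section of $\Phi$. The section property is immediate from the fact that each $H_x$ lies $\Phi$-over the inclusion $I/x\into I$, and preservation of composition for $G$ again uses the invariant to rewrite $H_y(z\prec y)$ as $H_x(z\prec y)$ and then invokes functoriality of $H_x$. Conditions \ref{item:wf1} and \ref{item:wf2} then follow once I verify $G|_{\strsl I x}=G_x$, which is a direct unwinding of the definitions and yields $H_x=F(x,G|_{\strsl I x})$.

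Finally, uniqueness is a straightforward well-founded induction: assuming a second section $G'$ also satisfies \ref{item:wf1}--\ref{item:wf2}, the inductive hypothesis that $G$ and $G'$ agree on all $z\prec x$ (on both objects and morphisms) says exactly that $G|_{\strsl I x}=G'|_{\strsl I x}$, whence \ref{item:wf1}--\ref{item:wf2} force agreement at $x$ and on all morphisms $y\prec x$, covering every non-identity morphism of $I$. The main obstacle I anticipate is not any single computation but the bookkeeping of this coherence invariant: the recursion naturally produces a value $H_x$ living in a \emph{set that depends on $x$} — namely partial sections on $I/x$ — so one is really performing a dependent well-founded recursion, and the only way to keep the locally defined functors $H_x$ mutually compatible, so that they glue to an honest functor rather than a mere assignment on objects, is to carry the restriction condition $H_x|_{I/y}=H_y$ through the recursion as part of the recursive data.
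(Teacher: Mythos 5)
Your proof is correct. The paper itself offers no proof of this theorem: it simply cites van Oosten's note on defining functors by well-founded recursion and calls the statement ``an easy generalization.'' Your argument --- constructing, by dependent well-founded recursion, a coherent family of partial sections $H_x$ on the closed slices $I/x$ with the restriction invariant $H_x|_{I/y}=H_y$, and then gluing them into $G$ --- is exactly the standard argument that the citation points to, and the subtlety you flag at the end (that the recursion is genuinely dependent and must carry the coherence invariant, e.g.\ by proving existence \emph{and} uniqueness of each $H_x$ simultaneously by induction so that compatibility comes for free) is indeed the only real content of the proof.
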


\section{Internal inverse categories}
\label{sec:category-theory}

Let \C be a category with the following properties.
\begin{itemize}
\item \C has finite products, including a terminal object $1$.
\item \C has two subcategories whose morphisms we call \textbf{fibrations} and \textbf{prefibrations}.
\item Every isomorphism is a fibration, and every fibration is a prefibration.
\item Every morphism $A\to 1$ is a prefibration.
\item All pullbacks of fibrations and prefibrations exist and are again fibrations or prefibrations, respectively.
\item The dependent product of a prefibration $g$ along a prefibration $f$ exists, is always a prefibration, and is a fibration if $g$ and $f$ are both fibrations.
\end{itemize}
In all cases, the fibrations will be the maps that usually go by that name (in type theory they are sometimes instead called \emph{display maps}).
The prefibrations are an auxiliary class to ensure the existence of pullbacks and dependent products; in \crefrange{sec:model}{sec:homotopy-theory} every map will be a prefibration, while in \cref{sec:type-theory} the prefibrations will coincide with the fibrations.
As usual, $X$ is \textbf{fibrant} if $X\to 1$ is a fibration; by assumption every object is ``prefibrant''.

We write $\lC$ for the \textbf{prefibrant self-indexing}, a \C-indexed category with $\lC^X$ the category of prefibrations with codomain $X$.
Each $\lC^X$ satisfies the above hypotheses.

\begin{lem}\label{thm:dp-pres-fib}
  If we have $X\xto{g} Y \xto{h} Z \xto{k} W$ such that $g$ and $k$ are fibrations and $h$ is a prefibration, then the induced map $k_*(h) \to k_*(h g)$ is a fibration.
\end{lem}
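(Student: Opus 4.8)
The plan is to deduce this morphism-level statement from the object-level hypothesis that a dependent product of a fibration along a fibration is a fibration, by base-changing to the codomain of the map in question. Note first that the induced map is the image $k_*(g)$ of the morphism $g\colon hg\to h$ of $\lC^Z$ under the dependent-product functor $k_*\colon\lC^Z\to\lC^W$; write $P$ for $k_*Y$, which is the domain of $k_*(h)\colon P\to W$ and the codomain of $k_*(g)$ (this exists since $h$ is a prefibration and $k$, being a fibration, is a prefibration). I would then pull $k$ back along $k_*(h)$ to obtain $k_P\colon k^*P\to P$, where $k^*P=P\times_W Z=k^*(k_*Y)$; this $k_P$ is a fibration, being a pullback of the fibration $k$. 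The counit of the adjunction defining $k_*$ supplies a map $\epsilon_Y\colon k^*P\to Y$ lying over $Z$, and pulling $g$ back along $\epsilon_Y$ yields a map $\epsilon_Y^*g$ over $k^*P$ that is again a fibration, being a pullback of the fibration $g$.

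Now $\epsilon_Y^*g$ and $k_P$ are both fibrations, so the hypothesis on dependent products gives that $(k_P)_*(\epsilon_Y^*g)\to P$ is a fibration. The result will then follow once I identify this object of $\lC^P$ with $k_*(g)\colon k_*X\to P$: the map in question is then isomorphic over $P$ to a fibration, and since every isomorphism is a fibration and the fibrations form a subcategory, it is itself a fibration. All the hypotheses are used exactly here --- $g$ a fibration for $\epsilon_Y^*g$, $k$ a fibration for $k_P$, and $h$ merely a prefibration so that $P$ and $k_*X$ exist.

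The crux, and the step demanding real care, is the isomorphism $(k_P)_*(\epsilon_Y^*g)\cong k_*X$ over $P$; I would prove it from universal properties rather than pointwise. For a test object $T\to P$ of $\lC^P$, maps $T\to(k_P)_*(\epsilon_Y^*g)$ correspond, by the adjunction defining $(k_P)_*$ together with the universal property of the pullback $\epsilon_Y^*g$, to maps $\psi\colon k_P^*T\to X$ over $Z$ whose composite with $g$ equals $\epsilon_Y$ precomposed with the structure map of $k_P^*T$. On the other side, maps $T\to k_*X$ commuting with $k_*(g)$ transpose under $k^*\dashv k_*$ to maps $k^*T\to X$ over $Z$ whose composite with $g$ is the transpose $\epsilon_Y\circ k^*(t_T)$ of the structure map $t_T\colon T\to P=k_*Y$. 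The pullback identity $k_P^*T\cong k^*T$ over $Z$ matches the two descriptions, so the two hom-sets agree naturally in $T$ and Yoneda yields the isomorphism. The only delicate point is the bookkeeping of the transpositions and the counit, which is where the triangle identities for $k^*\dashv k_*$ enter.
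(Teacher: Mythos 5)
Your proof is correct and follows essentially the same route as the paper: your pullback $k_P$ and the counit $\epsilon_Y$ are exactly the two legs $q$ and $p$ of what the paper calls a ``distributivity pullback'', and your identification $(k_P)_*(\epsilon_Y^*g)\cong k_*(g)$ over $P=k_*Y$ is precisely the instance at $g$ of the isomorphism $r_!q_*p^*\cong k_*h_!$ that the paper cites from Weber's Prop.~2.2.3, after which both arguments conclude in the same way (pullbacks of the fibrations $g$ and $k$, then the dependent-product axiom). The only difference is that you prove that isomorphism directly by adjunction and Yoneda where the paper cites it, which makes your argument self-contained; incidentally, you also correctly treat the induced map as going $k_*(hg)\to k_*(h)$, which is the direction the paper's own proof uses even though the lemma statement writes the arrow the other way.
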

\begin{proof}
  In the language of~\cite{weber:poly-pb}, the following square is a ``distributivity pullback'':
  \[ \xymatrix@-.5pc{k^* k_* Y \ar[r]^-p \ar[d]_q & Y \ar[r]^h & Z \ar[d]^k\\
    k_* Y \ar[rr]_-r && W. } \]
  Thus, by~\cite[Prop.~2.2.3]{weber:poly-pb}, the mate $r_! q_* p^* \to k_*  h_!$ is an isomorphism.
  Now our map $k_*(h) \to k_*(h g)$ is the composite $k_* h_!(g) \cong r_! q_* p^*(g) \to r_!(1) = r$.
  Since $g$ is a fibration, so is its pullback $p^*(g)$, and since $q$ is a fibration (being a pullback of the fibration $k$), so is $q_* p^*(g)$.
  Finally, $r_!$ doesn't change the underlying map in \C.
\end{proof}

If $I$ is a well-founded poset and $A\in \C^{I\op}$, its \textbf{matching object} at $x\in I$ is the limit of its restriction to $\strsl x {I\op}$ (or equivalently $(\strsl I x)\op$), if it exists:
\[ M_x A = \lim_{\strsl x {I\op}} A. \]
We say $A$ is \textbf{Reedy fibrant} if $M_x A$ exists and the induced map $A_x\to M_x A$ is a fibration for all $x$.
More generally, $A\to B$ is a \textbf{Reedy fibration} if each $M_x A$, $M_x B$, and the pullback $M_x A \times_{M_x B} B_x$ exist, and each induced map $A_x \to M_x A \times_{M_x B} B_x$ is a fibration.
Similarly, we have \textbf{Reedy prefibrations} and \textbf{Reedy prefibrant} objects.
The following are simplified versions of~\cite[Defs.~11.4 and 11.9 and Lem.~11.8]{shulman:invdia}.

\begin{defn}\label{defn:prereedy-limits}
  For a well-founded poset $I$, we say \C has \textbf{pre-Reedy $I\op$-limits} if
  \begin{enumerate}
  \item Any Reedy prefibrant $A\in\C^{I\op}$ has a limit, and 
  \item If $A,B\in\C^{I\op}$ are Reedy prefibrant and $f:A\to B$ is a Reedy fibration, $\lim f : \lim A \to \lim B$ is a fibration.\label{item:rlim2}
In particular, if $A$ is Reedy fibrant, $\lim A$ is fibrant.
  \end{enumerate}
\end{defn}

\begin{defn}\label{defn:pre-admissible}
  A well-founded poset $I$ is \textbf{pre-admissible} for \C if \C has pre-Reedy $(\strsl I x)\op$-limits for all $x\in I$.
\end{defn}

\begin{lem}
  If $I$ is finite, then any \C satisfying our hypotheses above has pre-Reedy $I\op$-limits.
  Thus, if each $\strsl I x$ is finite, then $I$ is pre-admisible for any \C.
\end{lem}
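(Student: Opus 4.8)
The plan is to prove both clauses of \cref{defn:prereedy-limits} simultaneously by induction on the (finite) cardinality of $I$. When $I=\emptyset$ the only diagram is empty and its limit is the terminal object $1$, which exists and is fibrant since $1\to 1$ is an isomorphism; the sole Reedy fibration is the identity, so both clauses hold. For the inductive step, pick a maximal element $x\in I$ (possible as $I$ is finite and nonempty) and set $I'=I\setminus\{x\}$. Since $x$ is maximal, both $I'$ and $\strsl I x$ are down-closed in $I$, with $\strsl I x\subseteq I'$ and $|I'|<|I|$; restriction along either inclusion leaves all matching objects unchanged and hence preserves Reedy (pre)fibrancy, so by the inductive hypothesis \C has pre-Reedy $I'\op$-limits and every matching object $M_x A=\lim_{(\strsl I x)\op}A$ exists.

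The crux is the decomposition
\[ \lim_{I\op} A \;\cong\; \lim_{I'\op} A \times_{M_x A} A_x, \]
with legs the restriction $\lim_{I'\op} A\to M_x A$ and the matching map $A_x\to M_x A$. This follows directly from the universal property of the limit: because $x$ is maximal, the only arrows of $I\op$ touching $x$ run from $x$ into $\strsl I x$, so a cone over $A$ is exactly a cone over $A|_{I'}$ together with a leg $A_x$ whose image in $M_x A$ matches that of the $I'$-cone. For the existence clause, $A_x\to M_x A$ is a prefibration (as $A$ is Reedy prefibrant), so the displayed pullback exists by our standing hypotheses, proving that $\lim_{I\op}A$ exists.

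For condition~\ref{item:rlim2}, let $f\colon A\to B$ be a Reedy fibration of Reedy prefibrant diagrams. I would factor $\lim f$ as
\[ \lim_{I\op} A \longrightarrow Q \longrightarrow \lim_{I\op} B,\qquad Q \coloneqq \lim_{I\op} B \times_{\lim_{I'\op} B} \lim_{I'\op} A, \]
where $Q\to\lim_{I\op}B$ is the pullback of $\lim_{I'\op}f$ along the restriction $\lim_{I\op}B\to\lim_{I'\op}B$. By induction $\lim_{I'\op}f$ is a fibration, so $Q$ exists and $Q\to\lim_{I\op}B$ is a fibration. Unwinding the two pullbacks identifies $Q\cong\lim_{I'\op}A\times_{M_x B}B_x$ and exhibits the first map $\lim_{I\op}A\to Q$ as the pullback of the map $A_x\to M_x A\times_{M_x B}B_x$ --- which is a fibration because $f$ is a Reedy fibration at $x$ --- along the canonical $Q\to M_x A\times_{M_x B}B_x$; hence it too is a fibration. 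As fibrations compose, $\lim f$ is a fibration. Taking $B$ to be the terminal diagram (which is Reedy fibrant, with $\lim B = 1$) recovers the final sentence of the clause. The closing assertion is then immediate from \cref{defn:pre-admissible}: if each $\strsl I x$ is finite, the first part gives pre-Reedy $(\strsl I x)\op$-limits for every $x$, which is exactly pre-admissibility of $I$.

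I expect the main obstacle to be condition~\ref{item:rlim2}, and within it the two pullback identifications just described: that the composite pullback $Q$ collapses to $\lim_{I'\op}A\times_{M_x B}B_x$, and that the residual map $\lim_{I\op}A\to Q$ is precisely the pullback of the Reedy matching map $A_x\to M_x A\times_{M_x B}B_x$. These are the usual pasting-of-pullbacks manipulations at the heart of the Reedy construction --- routine but bookkeeping-heavy --- whereas existence, the base case, and stability of Reedy prefibrancy under restriction are straightforward.
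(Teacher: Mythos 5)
Your proof is correct. The paper states this lemma without proof (it is offered as a routine simplification of a lemma from~\cite{shulman:invdia}), and your argument---induction on the cardinality of $I$ via a maximal element $x$, the pullback decomposition $\lim_{I\op} A \cong \lim_{(I\setminus\{x\})\op} A \times_{M_x A} A_x$, and the induced factorization of $\lim f$ through $Q$ for a Reedy fibration $f$---is exactly the standard Reedy-style argument the paper is taking for granted.
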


\begin{defn}\label{defn:cinversecat}
  A \textbf{\C-inverse category} \I consists of the following.
  \begin{enumerate}
  \item A set $\I_0$ of ``objects'' equipped with a transitive well-founded relation $\prec$.\label{item:cic1}
  \item For each $x\in \I_0$, a specified object $\I(x)\in \C$.
  \item For each $x,y\in\I_0$ with $y\prec x$, a span $\I(x) \ot \I(x,y) \to \I(y)$, in which $\I(x,y) \to \I(x)$ is a prefibration.
    (In particular, there is no $\I(x,x)$.)
  \item For each $x,y,z\in\I_0$ with $z\prec y\prec x$, a map $\I(x,y) \times_{\I(y)} \I(y,z) \to \I(x,z)$ over $\I(x)$ and $\I(z)$ (the pullback existing because $\I(y,z) \to \I(y)$ is a prefibration).
  \item For each $x,y,z,w\in\I_0$, the evident associativity square commutes.
  \end{enumerate}
\end{defn}


\begin{eg}\label{eg:wf-cinv}
  If $\I(x) = \I(x,y) = 1$ for all $x,y$, the only datum is $(\I_0,\prec)$.
\end{eg}

\begin{eg}\label{eg:inv-cinv}
  If \C has pullback-stable coproducts, an ordinary inverse category $I$ yields a \C-inverse category with the same objects, $\I(x)=1$ for all $x$, and $\I(x,y) = \coprod_{I(x,y)} 1$.
\end{eg}



\begin{rmk}
  The referee has pointed out that when \C is infinitary-extensive~\cite{clw:ext-dist}, a \C-inverse category is equivalently an ordinary \C-internal category \K together with an identity-reflecting functor $\K\to\Delta(\I_0)$, plus a prefibration condition.
  Here $\Delta(\I_0)$ is the discrete internal category on the ordinary category $\I_0$, and ``identity-reflecting'' means that the square
witnessing the preservation of identities is a pullback.
\end{rmk}

\begin{defn}\label{defn:idiag}
  Let \I be a \C-inverse category.
  The \C-indexed category $\lC^\I$ of \textbf{\I-diagrams} is defined as follows.
  \begin{itemize}
  \item An object $A\in (\lC^\I)^X$ consists of
    \begin{enumerate}
    \item For each $x\in \I_0$, a span $X \ot A_x \to \I(x)$ in which $A_x \to X$ is a prefibration.
    \item For each $y\prec x$ in $\I_0$, a map $A_x \times_{\I(x)} \I(x,y) \to A_y$ over $X$ and $\I(y)$.
    \item For each $z\prec y\prec x$, the evident associativity square commutes.
    \end{enumerate}
  \item A map in $(\lC^\I)^X$ consists of span maps $A_x \to B_x$ commuting with the actions.
  \item Reindexing along $f:Y\to X$ in \C is given by pullback of $A_x \to X$.
  \end{itemize}
  For brevity, we will write $\C^\I$ in place of $(\lC^\I)^1$.
\end{defn}

\begin{eg}
  For \I as in \cref{eg:wf-cinv,eg:inv-cinv}, $(\lC^\I)^X$ reduces to the usual category of diagrams in the category of prefibrations over $X$.
\end{eg}

\begin{defn}
  For any \C-inverse category \I and any subset $\J_0 \subseteq \I_0$, there is a \textbf{full \C-inverse subcategory} \J of \I defined by $\J(x) = \I(x)$, $\J(x,y) = \I(x,y)$, and so on.
  We say \J is \textbf{down-closed} if $\J_0$ is downwards closed under $\prec$.

  Given $x\in\I_0$, we denote by $x/\I$ and $\strsl x\I$ the down-closed full \C-inverse subcategories of \I determined by $\I_0/x = \setof{y|y\preceq x}$ and $\strsl{\I_0}{x} = \setof{y|y\prec x}$, respectively.
\end{defn}

\begin{eg}\label{thm:collage}
  For any \I and $x\in\I_0$, the spans $\I(x) \ot \I(x,y) \to \I(y)$ and corresponding actions of $\I(y,z)$ assemble \emph{precisely} into an object of $(\lC^{\strsl x \I})^{\I(x)}$, which we denote $\I(x,-)$.
  Said differently, the additional data required to extend a \C-inverse category \J by adding a new object $x$ ``at the top'' consists precisely of an object $\I(x)\in\C$ and an object of $(\lC^{\J})^{\I(x)}$.
  Categorically speaking, $\I$ is the \emph{collage} of $\I(x,\blank)\in (\lC^{\J})^{\I(x)}$, regarded as a sort of ``profunctor'' from $\J$ to the $\I(x)$-indexed terminal category.
\end{eg}

For any $x\in\I_0$ there is a \C-indexed forgetful functor $\lC^{x/\I}\to \lC^{\strsl x \I}$.
By definition, to extend $A\in (\lC^{\strsl x \I})^X$ to an object of $(\lC^{x /\I})^X$ we must give:
\begin{enumerate}
\item An object $A_x$ and a span $X \ot A_x \to \I(x)$ in which $A_x \to X$ is a prefibration.
\item For each $y\prec x$, a map $A_x \times_{\I(x)} \I(x,y) \to A_y$ over $X$ and $\I(y)$, where $A_y$ is given as part of the given diagram $A\in (\lC^{\strsl x \I})^X$.
\end{enumerate}
such that
\begin{enumerate}[resume]
\item The evident associativity squares commute.
\end{enumerate}
Now a map $A_x \times_{\I(x)} \I(x,y) \to A_y$ over $X$ and $\I(y)$ is equivalently a map $A_x \times_{\I(x)} \I(x,y) \to A_x \times_X A_y$ over $A_x$ and $\I(y)$.
The associativity diagrams then say that these maps assemble into a morphism in $(\lC^{\strsl x \I})^{A_x}$ from the reindexing of $\I(x,-)$ along $A_x \to \I(x)$ to the reindexing of $A$ along $A_x \to X$.
\cref{defn:locfib} then gives:

\begin{thm}\label{thm:reedy-char}
  Given \I, an $x\in\I_0$, and $A\in (\lC^{\strsl x \I})^X$, if the indexed hom-object $\lC^{\strsl x \I}(\I(x,-),A)$ exists, then to extend $A$ to an object of $(\lC^{x/\I})^X$ we must give
  \begin{enumerate}
  \item An object $A_x$ and
  \item A map $A_x \to \lC^{\strsl x \I}(\I(x,-),A)$ such that
  \item The composite $A_x \to \lC^{\strsl x \I}(\I(x,-),A) \to \I(x) \times X \to X$ is a prefibration.
  \end{enumerate}
  Similarly, given $A,B\in (\lC^{x/ \I})^X$ and a map $\bar f$ between the restrictions $\bar{A}$ and $\bar{B}$ of $A$ and $B$ to $\strsl x\I$, if $\lC^{\strsl x \I}(\I(x,-),\bar A)$ and $\lC^{\strsl x \I}(\I(x,-),\bar B)$ exist, to extend $\bar f$ to a map $f:A\to B$ we need
  \begin{enumerate}
  \item A map $f_x :A_x \to B_x$ such that
  \item The following square commutes:
    \begin{equation*}
      \vcenter{\xymatrix@C=5pc@R=1.5pc{
          A_x \ar[r]^{f_x}\ar[d] &
          B_x \ar[d]\\
          \lC^{\strsl x \I}(\I(x,-),\bar A)\ar[r]_{\lC^{\strsl x \I}(\I(x,-),\bar f)} &
          \lC^{\strsl x \I}(\I(x,-),\bar B).
        }}
    \end{equation*}
  \end{enumerate}
\end{thm}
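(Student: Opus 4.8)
The plan is to read off both halves of the statement directly from the universal property recorded in \cref{defn:locfib}, applied to the representing object $\lC^{\strsl x \I}(\I(x,-),A)\to\I(x)\times X$ (which the hypothesis guarantees exists), using the reformulation of the extension problem carried out in the paragraphs immediately preceding the theorem.

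For the first part, I would begin from the conclusion of that discussion: extending $A$ to $(\lC^{x/\I})^X$ means supplying an object $A_x$, a span $X\ot A_x\to\I(x)$ with $A_x\to X$ a prefibration, and---encoding the action maps together with their associativity---a morphism in $(\lC^{\strsl x \I})^{A_x}$ from $p^*\I(x,-)$ to $q^*A$, where $(p,q):A_x\to\I(x)\times X$ are the two legs of the span. By \cref{defn:locfib}, the object $\lC^{\strsl x \I}(\I(x,-),A)$ represents precisely the functor sending $(p,q):Z\to\I(x)\times X$ to the set of such morphisms $p^*\I(x,-)\to q^*A$ in $(\lC^{\strsl x \I})^Z$. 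Hence a span $(p,q)$ together with such a morphism is the same thing as a single map $A_x\to\lC^{\strsl x \I}(\I(x,-),A)$, whose postcomposite with the structure map to $\I(x)\times X$ recovers $(p,q)$. This yields items (i) and (ii), and the leftover prefibration requirement on $A_x\to X$ reads off as condition (iii) on the further composite to $X$.

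For the second part, I would observe that extending $\bar f$ to a map $f:A\to B$ amounts to choosing a component $f_x:A_x\to B_x$ that respects the spans and intertwines the action maps of $A$ and $B$ through $\bar f$. The two vertical maps in the displayed square are the classifying maps of the respective actions supplied by the first part, and the lower horizontal map $\lC^{\strsl x \I}(\I(x,-),\bar f)$ is obtained by functoriality of the representing object in its second argument---concretely, post-composition with (the reindexings of) $\bar f$, which by the Yoneda lemma induces a map of representing objects over $\I(x)\times X$. Projecting the square to $\I(x)\times X$ encodes the compatibility of $f_x$ with the spans, while, via the universal property of \cref{defn:locfib}, full commutativity is exactly the assertion that $f_x$ intertwines the two actions; thus $f_x$ extends $\bar f$ if and only if the square commutes.

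The genuinely delicate point, on which I expect the (modest) effort to concentrate, is the \emph{naturality} of the representing bijection from \cref{defn:locfib}: to know that $\lC^{\strsl x \I}(\I(x,-),\bar f)$ really computes post-composition with $\bar f$ on classified morphisms---so that the two routes around the square agree exactly when $f_x$ intertwines the actions---I must check that the identification in the first part is natural in the target object over $\I(x)\times X$. Everything else is the bookkeeping of spans, reindexings, and associativity diagrams already assembled in the discussion preceding the theorem, so that once the universal property is invoked both claims follow at once.
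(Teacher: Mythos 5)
Your proposal is correct and follows essentially the same route as the paper: the paper's ``proof'' is precisely the discussion preceding the theorem, which unpacks an extension of $A$ into a span $X \ot A_x \to \I(x)$ (with $A_x \to X$ a prefibration) plus a morphism in $(\lC^{\strsl x \I})^{A_x}$ from the reindexing of $\I(x,-)$ to the reindexing of $A$, and then invokes the representability of \cref{defn:locfib} to convert that morphism-plus-span into a single map $A_x \to \lC^{\strsl x \I}(\I(x,-),A)$, exactly as you do. The naturality point you flag for the morphism half is the same routine Yoneda bookkeeping the paper leaves implicit in its ``Similarly''; there is no gap.
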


\begin{defn}
  Given $A \in (\lC^{\strsl x\I})^X$, if $\lC^{\strsl x \I}(\I(x,-),A)$ exists, we call it the \textbf{matching object} of $A$ at $x$ and denote it by $M_x A$.
  If $A \in(\lC^{\I})^X$ instead, we write $M_x A$ for the matching object of its restriction to $\strsl x \I$.
\end{defn}

\begin{defn}\label{defn:reedy-fibration}
  An $A\in (\lC^\I)^X$ is \textbf{Reedy fibrant} (resp.~\textbf{Reedy prefibrant}) if each $M_x A$ exists and each map $A_x \to M_x A$ is a fibration (resp.~a prefibration).
  More generally, $f:A\to B$ in $(\lC^\I)^X$ is a \textbf{Reedy fibration} if each $M_x A$ and $M_x B$ and each pullback $B_x \times_{M_x B} M_x A$ exist, and $A_x \to B_x \times_{M_x B} M_x A$ is a fibration.
\end{defn}

The following definition may look curious, but it is essential for \cref{thm:homs}.
A reader who wants to understand it better immediately may skip forward to \cref{sec:fibrant-categories}.

\begin{defn}\label{defn:invcat-fibrant}
  A \C-inverse category \I is \textbf{fibrant} if each $\I(x)$ is fibrant and each $\I(x,-)\in (\lC^{\strsl x \I})^{\I(x)}$ is Reedy fibrant.
\end{defn}

If $\J\subseteq \I$ is a down-closed full \C-inverse subcategory, then $\strsl x \J = \strsl x \I$ for any $x\in\J_0$, so restriction $\lC^\I\to\lC^\J$ preserves matching objects.
Thus, any down-closed full \C-inverse subcategory of a fibrant \I is again fibrant; this includes $x/\I$ and $\strsl x \I$.


\begin{lem}\label{thm:hom-limit}
  For any \I and $A\in (\lC^\I)^X$ and $B\in (\lC^\I)^Y$, if $\lC^{x/\I}(A,B)$ exists for all $x\in\I_0$, then in $\C/(X\times Y)$ we have
  \[ \lC^\I(A,B) \cong \lim_{x\in\I_0} \lC^{x/\I}(A,B) \]
  in the strong sense that if either exists, so does the other and they are isomorphic.
\end{lem}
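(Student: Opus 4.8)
The plan is to argue representably and then let the Yoneda embedding do the work. By \cref{defn:locfib}, whenever it exists $\lC^\I(A,B)$ represents the presheaf
\[ F\colon \C/(X\times Y)\op\to\mathrm{Set},\qquad ((p,q)\colon Z\to X\times Y)\mapsto(\lC^\I)^Z(p^*A,q^*B), \]
and likewise $\lC^{x/\I}(A,B)$ represents $F_x\colon((p,q)\colon Z\to X\times Y)\mapsto(\lC^{x/\I})^Z(p^*A,q^*B)$, where I write $p^*A$ also for its restriction to $x/\I$ (reindexing and restriction to a down-closed full subcategory commute, both acting pointwise on the spans $A_y$). Whenever $x\preceq x'$ the inclusion $x/\I\into x'/\I$ induces by restriction a natural transformation $F_{x'}\to F_x$, hence---using the hypothesis that each $\lC^{x/\I}(A,B)$ exists---a map $\lC^{x'/\I}(A,B)\to\lC^{x/\I}(A,B)$; these make $x\mapsto\lC^{x/\I}(A,B)$ into a diagram over $\I_0$ (contravariant in $\preceq$) whose limit is the right-hand side, and the restriction maps out of $\lC^\I$ assemble into a cone over it. So it remains to produce a bijection $F\cong\lim_{x\in\I_0}F_x$ in $[\C/(X\times Y)\op,\mathrm{Set}]$ and to transport it through representability.

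The heart is the pointwise bijection: a morphism $\phi\colon p^*A\to q^*B$ in $(\lC^\I)^Z$ should be the same datum as a family $(\psi^x)_{x\in\I_0}$ with $\psi^x\in F_x(Z)$ that is compatible under restriction, i.e.\ $\psi^{x'}|_{x/\I}=\psi^x$ for $x\preceq x'$. In one direction I would send $\phi$ to the family of its restrictions $\phi|_{x/\I}$, which are visibly compatible. Conversely, given a compatible family, I would set $\phi_y$ to be the component of $\psi^y$ at the top object $y$ of $y/\I$; compatibility forces $\psi^x_y=\psi^y_y$ for every $x\succeq y$, so the components are unambiguous and agree with those of each $\psi^x$. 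Each condition on a morphism in \cref{defn:idiag} is a commuting square attached to a single relation $y\prec x$, hence to the pair $y,x\in x/\I$, so it holds for $\phi$ because it holds for $\psi^x$; thus $\phi$ is a genuine $\I$-diagram morphism and the two assignments are mutually inverse. Since everything is done componentwise and commutes with pullback along $Z'\to Z$, the bijection is natural in $(p,q)$. I expect this bookkeeping---checking that every component and every commutation condition of a global morphism is seen by exactly the principal ideals $x/\I$, and only these---to be the main, though routine, point of the argument; it is precisely here that the choice of $x/\I$ rather than $\strsl x\I$ matters, since the former retains the component at $x$ itself.

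Finally I would transport this through representability. Because $F\cong\lim_x F_x$ with each $F_x$ represented by $\lC^{x/\I}(A,B)$, and because the Yoneda embedding of $\C/(X\times Y)$ is fully faithful and continuous, $F$ is representable exactly when $\lim_x\lC^{x/\I}(A,B)$ exists in $\C/(X\times Y)$: if the limit exists then $\C/(X\times Y)(\blank,\lim_x\lC^{x/\I}(A,B))\cong\lim_x F_x\cong F$ exhibits it as $\lC^\I(A,B)$, while if $\lC^\I(A,B)$ exists the canonical cone together with the bijection shows it satisfies the universal property of $\lim_x\lC^{x/\I}(A,B)$. This yields the asserted ``strong sense'' that either object exists iff the other does, with a canonical isomorphism, essentially for free once the bijection above is in place.
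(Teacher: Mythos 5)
Your proof is correct and takes essentially the same approach as the paper, whose entire proof is the one-sentence observation that a morphism between \I-diagrams is determined by compatible morphisms between its restrictions to the subcategories $x/\I$, so that both sides represent the same functor on $\C/(X\times Y)$. Your proposal just fills in the bookkeeping (the pointwise bijection, its naturality, and the Yoneda transport giving the ``strong sense'' of the isomorphism) that the paper leaves implicit.
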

\begin{proof}
  A morphism between \I-diagrams is determined by compatible morphisms between their restrictions to each $x/\I$, so both sides represent the same functor.
\end{proof}


\begin{lem}\label{thm:cinv-wf}
  For \C-inverse categories \I and \J, define $\J\prec \I$ to mean that $\J = \strsl x\I$ for some $x\in \I$.
  Then the relation $\prec$ is transitive and well-founded.
\end{lem}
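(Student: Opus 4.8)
The plan is to reduce both assertions to the corresponding properties of the underlying relation $\prec$ on the object set of a single $\C$-inverse category, using the fact that iterated strict slicing collapses. For transitivity, suppose $\K \prec \J \prec \I$, so that $\J = \strsl x \I$ for some $x\in\I_0$ and $\K = \strsl y \J$ for some $y\in\J_0$. Since $\J_0 = (\strsl x \I)_0 = \strsl{\I_0}{x} = \{z \mid z\prec x\}$, we have $y\prec x$. I would then compute
$\K_0 = (\strsl y \J)_0 = \{z\in\J_0 \mid z\prec y\} = \{z \mid z\prec x \text{ and } z\prec y\}$,
and simplify this using that $\prec$ on $\I_0$ is transitive (\cref{defn:cinversecat}) together with $y\prec x$: every $z\prec y$ already satisfies $z\prec x$, so the set equals $\{z \mid z\prec y\} = \strsl{\I_0}{y}$. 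Because the hom-objects, compositions, and associativity data of a full $\C$-inverse subcategory are simply inherited from the ambient one, a full subcategory of a full subcategory, taken on a given object set, coincides with the full subcategory of $\I$ on that set; hence $\K = \strsl y \I$ with $y\in\I_0$, i.e.\ $\K\prec\I$.

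The very same computation establishes the structural fact driving well-foundedness: the $\prec$-predecessors of $\strsl x \I$ are exactly the categories $\strsl y \I$ for $y\prec x$, since $\strsl y{\strsl x \I} = \strsl y \I$. Thus the predecessors of a fixed $\I$ are indexed by the object poset $(\I_0,\prec)$, and slicing commutes with this indexing. For well-foundedness, read in the induction-principle form appropriate to the (proper) class of $\C$-inverse categories, let $P$ be a property that holds at $\I$ whenever it holds at every $\J\prec\I$; I must show $P$ holds everywhere. Fixing $\I$, I would define a property $R$ on the set $\I_0$ by letting $R(x)$ hold iff $P(\strsl x \I)$. The preceding fact shows $R$ is progressive for the well-founded relation $\prec$ on $\I_0$: if $R(y)$ holds for all $y\prec x$, then $P$ holds at all predecessors $\strsl y \I$ of $\strsl x \I$, whence $P(\strsl x \I)$, i.e.\ $R(x)$. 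By well-founded induction on $\I_0$ (\cref{sec:wf}), $R$ holds throughout $\I_0$, so $P$ holds at every predecessor $\strsl x \I$ of $\I$, and therefore $P(\I)$ by the hypothesis on $P$.

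The main obstacle is the bookkeeping in the transitivity step — specifically, verifying that $\strsl y{\strsl x \I}$ and $\strsl y \I$ agree \emph{as $\C$-inverse categories} and not merely on underlying object sets. This is precisely where transitivity of the base relation is genuinely used, and where one must confirm that all of the structure (objects $\I(x)$, spans, actions, and associativity) of a full $\C$-inverse subcategory is inherited unchanged under iterated slicing. Once that identity is secured, well-foundedness is a one-line transfer along $x\mapsto\strsl x \I$ from the well-foundedness of $\prec$ on each $\I_0$. I would also remark explicitly that, as the $\C$-inverse categories form a proper class, ``well-founded'' is taken here in the induction-principle sense rather than via subsets, the two being the relevant notions for the recursive constructions that follow.
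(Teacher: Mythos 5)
Your proof is correct and takes essentially the same route as the paper: transitivity via the collapse identity $\strsl y {(\strsl x \I)} = \strsl y \I$ (which you verify in detail, including that the full-subcategory structure is inherited), and well-foundedness by transferring well-founded induction from $(\I_0,\prec)$ to the class of \C-inverse categories. The only difference is presentational: the paper's one-line argument cites the analogous fact that the class of sets equipped with well-founded relations is itself well-founded under strict slicing, whereas you prove the induction principle directly by fixing \I and inducting on $\I_0$ with the predicate $R(x) := P(\strsl x \I)$ --- which is precisely the argument underlying the paper's citation.
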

\begin{proof}
  Transitivity is because $\strsl y {(\strsl x \I)} = \strsl y \I$.
  Well-foundedness is because the class of sets with well-founded relations is itself well-founded with an analogous $\prec$.
\end{proof}

\begin{thm}\label{thm:homs}
  Suppose \I is fibrant, $A\in (\lC^\I)^X$ and $B\in (\lC^\I)^Y$ are Reedy prefibrant, and $\lC^{X\times Y}$ has pre-Reedy $\I_0\op$-limits.
  Then the hom-object $\lC^\I(A,B)$ exists, and $\lC^\I(A,B)\to X\times Y$ is a prefibration, which is a fibration if $A$ and $B$ are Reedy fibrant.
\end{thm}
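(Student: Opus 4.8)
The plan is to induct on \I along the well-founded relation of \cref{thm:cinv-wf}, with inductive hypothesis the full statement of the theorem for every \C-inverse category $\J\prec\I$. The organizing tool is \cref{thm:hom-limit}: as soon as each $\lC^{x/\I}(A,B)$ is known to exist, it gives $\lC^\I(A,B)\cong\lim_{x\in\I_0}\lC^{x/\I}(A,B)$, the limit of a diagram $D$ in $(\lC^{X\times Y})^{\I_0\op}$ whose transition maps are the restriction maps of homs. So I would (a)~construct each $\lC^{x/\I}(A,B)$, (b)~show that $D$ is Reedy prefibrant, and Reedy fibrant when $A,B$ are Reedy fibrant, and (c)~conclude from the pre-Reedy $\I_0\op$-limits hypothesis on $\lC^{X\times Y}$. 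Note that the prefibrancy of $\lC^\I(A,B)\to X\times Y$ is then automatic, since the limit is computed inside $\lC^{X\times Y}$ and every object there is prefibrant; only the fibrancy in the Reedy-fibrant case needs the fibration clause of \cref{defn:prereedy-limits}.

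For step~(a), fix $x\in\I_0$. The matching objects $M_x A=\lC^{\strsl x\I}(\I(x,-),\bar A)$ and $M_x B$ exist because $A,B$ are Reedy prefibrant, and the restriction hom $\lC^{\strsl x\I}(\bar A,\bar B)$ exists by the inductive hypothesis applied to $\strsl x\I\prec\I$, which is fibrant as a down-closed full subcategory of the fibrant \I. The ``similarly'' half of \cref{thm:reedy-char} then presents a map $A\to B$ over $x/\I$ as a pair $(\bar f,f_x)$ with $\bar f\in\lC^{\strsl x\I}(\bar A,\bar B)$ and $f_x:A_x\to B_x$ over $\I(x)$, subject to the single square relating $f_x$ to $M_x\bar f$. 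Accordingly I would build $\lC^{x/\I}(A,B)$ as the pullback of $\lC^{\strsl x\I}(\bar A,\bar B)$ against the relative hom-object of $A_x$ over $B_x$, formed along the matching maps $A_x\to M_x\bar A\xto{M_x\bar f}M_x\bar B\ot B_x$; its existence is formal, using local smallness (\cref{defn:locfib}) together with the matching objects and the restriction hom.

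The structural crux of step~(b) is the identification of the matching objects of $D$. Applying \cref{thm:hom-limit} to $\strsl x\I$ and using $y/\I=y/(\strsl x\I)$ for $y\prec x$ yields $M_x D\cong\lim_{y\prec x}\lC^{y/\I}(A,B)\cong\lC^{\strsl x\I}(\bar A,\bar B)$, and the comparison map $\lC^{x/\I}(A,B)\to M_x D$ is exactly the restriction map of homs. Thus Reedy (pre)fibrancy of $D$ reduces precisely to the claim that each restriction map $\lC^{x/\I}(A,B)\to\lC^{\strsl x\I}(\bar A,\bar B)$ is a prefibration, and a fibration when $A,B$ are Reedy fibrant.

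Establishing that claim is where the real work lies, and I expect it to be the main obstacle. From the pullback description and the map half of \cref{thm:reedy-char}, the restriction map is the pullback, along the section determined by $M_x\bar f$, of the post-composition map from the hom-object of $A_x$ over $B_x$ to the hom-object of $A_x$ over $M_x\bar B$. The latter is a dependent product along $A_x$ of the map $B_x\to M_x\bar B$, which is a (pre)fibration by Reedy (pre)fibrancy of $B$; \cref{thm:dp-pres-fib} is precisely the instrument showing that such a dependent product of a fibration is again a fibration (and of a prefibration a prefibration), with fibrancy of \I --- making each $\I(x,-)$ Reedy fibrant, hence $M_x\bar A\to\I(x)\times X$ and $M_x\bar B\to\I(x)\times Y$ fibrations via the inductive hypothesis --- supplying the conditions on the ambient maps. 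Since a pullback of a (pre)fibration is a (pre)fibration, the restriction map is as required. Once this is in place, the pre-Reedy $\I_0\op$-limits yield the existence of $\lC^\I(A,B)$, its prefibrancy over $X\times Y$ is automatic as noted above, and its fibrancy in the Reedy-fibrant case follows from the fibration clause of \cref{defn:prereedy-limits}; throughout one uses that the pre-Reedy limits required in the relevant slices and down-closed subposets are inherited from the stated hypothesis.
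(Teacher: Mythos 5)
Your proposal is correct, and its outer skeleton is exactly the paper's: well-founded induction via \cref{thm:cinv-wf}, reduction to the diagram $x\mapsto \lC^{x/\I}(A,B)$ via \cref{thm:hom-limit}, identification of its matching objects with $\lC^{\strsl x \I}(\bar A,\bar B)$, and conclusion from pre-Reedy $\I_0\op$-limits. Where you genuinely diverge is in the construction of $\lC^{x/\I}(A,B)$ and in how the restriction map $\lC^{x/\I}(A,B)\to\lC^{\strsl x \I}(\bar A,\bar B)$ is shown to be a (pre)fibration. The paper builds $\lC^{x/\I}(A,B)$ directly as $(\pi_2)_*\bigl((c^*B_x)^{\pi_1^*A_x}\bigr)$, where $c:M_xA\times_X\lC^{\strsl x \I}(A,B)\to M_xB$ is the composition map and $\pi_2$ the projection onto $\lC^{\strsl x \I}(A,B)$; there the restriction map is literally the structure map of a dependent product of a prefibration along a prefibration, so its (pre)fibrancy is immediate from the closure axioms, and the real work is the adjunction-chasing verification of the universal property. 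You instead form the pullback $\lC^{\strsl x \I}(\bar A,\bar B)\times_{\mathrm{Hom}(A_x,M_x\bar B)}\mathrm{Hom}(A_x,B_x)$, which makes the universal property essentially automatic from \cref{thm:reedy-char}, but shifts the work onto proving that post-composition $\mathrm{Hom}(A_x,B_x)\to\mathrm{Hom}(A_x,M_x\bar B)$ is a (pre)fibration; that is where \cref{thm:dp-pres-fib} enters, a lemma the paper itself does not need until \cref{thm:homs-fibration}. Both routes work; yours trades the paper's universal-property verification for functoriality-of-dependent-product arguments, and has the mild advantage of making the role of \cref{thm:dp-pres-fib} visible one theorem earlier.

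Two details need tightening. First, \cref{thm:dp-pres-fib} as stated covers only the fibration case; your parenthetical ``and of a prefibration a prefibration'' is not in the paper, yet you need it already in the Reedy-prefibrant case, both for Reedy prefibrancy of the diagram of homs and for your defining pullback to exist at all. It does hold, by running the same distributivity-pullback proof with prefibrations throughout, but it must be stated and proved. Second, your ``dependent product along $A_x$'' is really a composite: pull everything back to $\I(x)\times X\times Y$, take a local-exponential stage (a dependent product along $A_x\to\I(x)\times X\times Y$, whose fibrancy in the Reedy-fibrant case uses $A_x\to M_x\bar A\to\I(x)\times X$ being a composite of fibrations), and then a further dependent product along the projection $\I(x)\times X\times Y\to X\times Y$, which is a fibration precisely because $\I(x)$ is fibrant. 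So \cref{thm:dp-pres-fib} (and its prefibration analogue) gets applied at both stages; your sketch compresses this, but the ingredients you name are the right ones.
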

\begin{proof}
  By well-founded induction on the relation $\prec$ from \cref{thm:cinv-wf}, when proving the claim for \I we may assume it for each $\strsl x \I$.
  We begin by showing that it is also true for each $x/\I$.
  Thus suppose given Reedy prefibrant $A\in (\lC^{x/\I})^X$ and $B\in (\lC^{x/\I})^Y$.
  By the inductive hypothesis, we have prefibrations
  \begin{align*}
    M_x A = \lC^{\strsl x \I}(\I(x,-),A) &\to \I(x)\times X\\
    M_x B = \lC^{\strsl x \I}(\I(x,-),B) &\to \I(x)\times Y\\
    \lC^{\strsl x \I}(A,B) &\to X\times Y
  \end{align*}
  Since $\I(x)$ is fibrant, $Y$ is (like every object) prefibrant, and $A$ and $B$ are Reedy prefibrant, we have prefibrations $M_x A \to X$ and $M_x B \to Y$ and $\lC^{\strsl x \I}(A,B) \to X$ and $A\to M_x A$ and $B\to M_x B$.
  If $A$ and $B$ are Reedy fibrant, all of these are fibrations.

  Now by the definition of $M_x$ as an indexed hom-object, the observation after \cref{defn:locfib} about composition for the latter gives us a composition map
  \[ c : M_x A \times_X \lC^{\strsl x \I}(A,B) \to M_x B, \]
  the pullback existing because $M_x A \to X$ is a prefibration.
  Both projections
  \begin{align*}
    \pi_1 : M_x A \times_X \lC^{\strsl x \I}(A,B) &\to M_x A\\
    \pi_2 : M_x A \times_X \lC^{\strsl x \I}(A,B) &\to \lC^{\strsl x \I}(A,B).
  \end{align*}
  are prefibrations, since they are pullbacks of the prefibrations $\lC^{\strsl x \I}(A,B) \to X$ and $M_x A \to X$ respectively; and if $A$ is Reedy fibrant, then $\pi_2$ is a fibration.

  Let $\pi_1^* A_x$ and $c^* B_x$ denote the pullbacks of $A_x$ and $B_x$ along $\pi_1$ and $c$ respectively, as in \cref{fig:homs}.
  Then we have induced maps
  \begin{align*}
    \pi_1^* A_x &\to M_x A \times_X \lC^{\strsl x \I}(A,B)\\
    c^* B_x &\to M_x A \times_X \lC^{\strsl x \I}(A,B).
  \end{align*}
  By assumption on \C, their local exponential $(c^*B_x)^{\pi_1^*A_x}$ exists and is a prefibration over $M_x A \times_X \lC^{\strsl x \I}(A,B)$.
  And since $\pi_2$ is a prefibration, the dependent product $(\pi_2)_*{\left((c^*B_x)^{\pi_1^*A_x}\right)}$ exists and is a prefibration.
  All of these maps are also fibrations if $A$ and $B$ are Reedy fibrant.
  Thus, the composite prefibration
  \begin{equation}
    (\pi_2)_* {\left((c^*B_x)^{\pi_1^*A_x}\right)} \to \lC^{\strsl x \I}(A,B) \to X\times Y\label{eq:homs}
  \end{equation}
  is a fibration if $A$ and $B$ are Reedy fibrant;
  I claim it has the desired universal property.

  By the universal property of $(\pi_2)_*$, for any $Z$, to give a map $Z\to (\pi_2)_* {\left((c^*B_x)^{\pi_1^*A_x}\right)}$ is equivalent to giving a map $Z\to \lC^{\strsl x \I}(A,B)$ along with a map $M_x A \times_X Z \to (c^*B_x)^{\pi_1^*A_x}$ over $M_x A \times_X \lC^{\strsl x \I}(A,B)$.
  And by the universal property of $(c^*B_x)^{\pi_1^*A_x}$, to give the latter is equivalent to giving a map $A_x \times_X Z \to c^* B_x$ over $M_x A \times_X \lC^{\strsl x \I}(A,B)$, or equivalently a map $A_x \times_X Z \to B_x$ over $c$.
  Applying the universal property of pullbacks again in reverse, this is equivalent to giving a map $A_x\times_X Z \to B_x\times_Y Z$ over the induced map $M_x A \times_X \lC^{\strsl x \I}(A,B) \to M_x B \times_Y \lC^{\strsl x \I}(A,B)$.

  Now, by the universal property of $\lC^{\strsl x \I}(A,B)$, a map $Z\to \lC^{\strsl x \I}(A,B)$ is equivalent to a map $(p,q):Z\to X\times Y$ together with a map $p^* \bar{A}\to q^*\bar{B}$ over $Z$ between the pullbacks of the restrictions of $A$ and $B$ to $\strsl x\I$ (which are also the restrictions of the pullbacks).
  In particular, there is a universal map $\bar A \times_X \lC^{\strsl x \I}(A,B) \to \bar B \times_Y \lC^{\strsl x \I}(A,B)$ over $\lC^{\strsl x \I}(A,B)$, from which the above-mentioned map $M_x A \times_X \lC^{\strsl x \I}(A,B) \to M_x B \times_Y \lC^{\strsl x \I}(A,B)$ is obtained by the functor $M_x$.
  Thus, to lift the latter map to a map $A_x\times_X Z \to B_x\times_Y Z$, i.e.\ a map $p^* A_x \to q^* B_x$, is the same as to lift its pullback $M_x A \times_X Z \to M_x B \times_Y Z$.
  Finally, by \cref{thm:reedy-char} this is equivalent to lifting the map $p^* \bar{A}\to q^*\bar{B}$ to a map $p^* A \to q^* B$, as desired.

  This concludes the proof for $x/\I$.
  By \cref{thm:hom-limit} to show that $\lC^\I(A,B)$ exists, we may show that $\lim_{x\in\I_0} \lC^{x/\I}(A,B)$ exists.
  Since $x\mapsto \lC^{x/\I}(A,B)$ is a diagram in $\lC^{X\times Y}$ indexed by $\I_0\op$, and $\lC^{X\times Y}$ has pre-Reedy $\I_0\op$-limits, it suffices to show that this diagram is Reedy prefibrant, and Reedy fibrant if $A$ and $B$ are Reedy fibrant.
  So we must show that $\lC^{x/\I}(A,B) \to \lim_{y\prec x} \lC^{y/\I}(A,B)$ is a prefibration which is a fibration if $A$ and $B$ are Reedy fibrant.
  But by \cref{thm:hom-limit}, we have $\lim_{y\prec x} \lC^{y/\I}(A,B) \cong \lC^{\strsl x\I}(A,B)$, and the above construction of $\lC^{x/\I}(A,B)$ showed that $\lC^{x/\I}(A,B) \to \lC^{\strsl x\I}(A,B)$ was a prefibration, and a fibration if $A$ and $B$ are Reedy fibrant.
  The claim follows from the fact that $\lC^{X\times Y}$ has pre-Reedy $\I_0\op$-limits.
\end{proof}

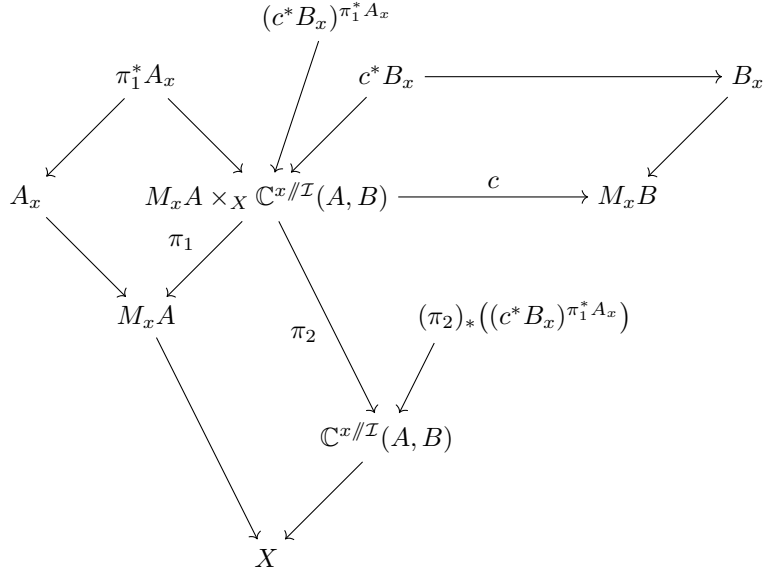
\begin{figure}
  \centering
  \begin{tikzpicture}[->,scale=.8]
    \node (HAxAB) at (0,0) {$M_x A \times_X \lC^{\strsl x \I}(A,B)$};
    \node (HB) at (6,0) {$M_x B$};
    \draw (HAxAB) -- node[auto] {$c$} (HB);
    \node (HA) at (-2,-2) {$M_x A$};
    \draw (HAxAB) -- node[auto,swap] {$\pi_1$} (HA);
    \node (AB) at (2,-4) {$\lC^{\strsl x \I}(A,B)$};
    \draw (HAxAB) -- node[auto,swap] {$\pi_2$} (AB);
    \node (X) at (0,-6) {$X$};
    \draw (AB) -- (X);
    \draw (HA) -- (X);
    \node (B') at (8,2) {$B_x$};
    \draw (B') -- (HB);
    \node (A') at (-4,0) {$A_x$};
    \draw (A') -- (HA);
    \node (cB') at (2,2) {$c^* B_x$};
    \draw (cB') -- (HAxAB);
    \draw (cB') -- (B');
    \node (piA') at (-2,2) {$\pi_1^*A_x$};
    \draw (piA') -- (HAxAB);
    \draw (piA') -- (A');
    \node (Q) at (1,3) {$(c^*B_x)^{\pi_1^*A_x}$};
    \draw (Q) -- (HAxAB);
    \node (piQ) at (3,-2) {$(\pi_2)_*\mathrlap{\left((c^*B_x)^{\pi_1^*A_x}\right)}$};
    \draw (piQ) -- (AB);
  \end{tikzpicture}
  \caption{The construction of hom-objects}
  \label{fig:homs}
\end{figure}

\begin{cor}\label{thm:matching}
  If \I is fibrant and $\I_0\op$ is pre-admissible for \C, and $A\in (\lC^{\strsl x \I})^X$ is Reedy prefibrant, then the matching object $M_x A$ exists, and the map $M_x A \to \I(x) \times X$ is a prefibration which is a fibration if $A$ is Reedy fibrant.\qed
\end{cor}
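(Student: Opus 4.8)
The plan is to observe that the matching object is literally one of the indexed hom-objects produced by \cref{thm:homs}, and then to apply that theorem to the smaller \C-inverse category $\strsl x\I$. By the definition of $M_x$ given just after \cref{thm:reedy-char}, we have
\[ M_x A = \lC^{\strsl x \I}(\I(x,-),A), \]
the indexed hom-object between $\I(x,-)\in(\lC^{\strsl x\I})^{\I(x)}$ from \cref{thm:collage} and the given $A\in(\lC^{\strsl x\I})^X$. So there is nothing to prove beyond checking that \cref{thm:homs} applies to $\strsl x\I$ with these two arguments, the first over base $\I(x)$ and the second over base $X$.

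First I would verify the three hypotheses of \cref{thm:homs} for $\strsl x\I$. It is fibrant, being a down-closed full \C-inverse subcategory of the fibrant \I (as noted after \cref{defn:invcat-fibrant}). Its two arguments are Reedy prefibrant: $A$ by assumption, and $\I(x,-)$ because fibrancy of \I means precisely that each $\I(x,-)$ is Reedy fibrant (\cref{defn:invcat-fibrant}), hence a fortiori Reedy prefibrant. Finally, the limit hypothesis of \cref{thm:homs} instantiated at $\strsl x\I$ with bases $\I(x)$ and $X$ asks that $\lC^{\I(x)\times X}$ have pre-Reedy $(\strsl{\I_0}{x})\op$-limits; this is what the pre-admissibility of $\I_0\op$ for \C provides. \cref{thm:homs} then yields that $M_x A$ exists and that $M_x A\to\I(x)\times X$ is a prefibration, which is a fibration when both arguments are Reedy fibrant. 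Since $\I(x,-)$ is always Reedy fibrant, the fibration conclusion holds as soon as $A$ alone is Reedy fibrant, exactly as claimed.

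The only point requiring care is the last hypothesis: pre-admissibility is a condition on \C, whereas \cref{thm:homs} over the base $\I(x)\times X$ needs the corresponding pre-Reedy limits in the self-indexing $\lC^{\I(x)\times X}$. One must therefore know that pre-admissibility of $\I_0\op$ is inherited by the prefibrant self-indexings $\lC^W$, which is consistent with the standing observation that each $\lC^W$ again satisfies our hypotheses on \C. This is precisely the same bookkeeping already used silently in the inductive step of \cref{thm:homs}; indeed the present corollary just isolates the opening move of that proof, where $M_x A$ was constructed and shown to be a (pre)fibration via the inductive hypothesis.
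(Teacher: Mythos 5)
Your proof is correct and is exactly the argument the paper intends: the corollary carries an immediate \qed\ precisely because $M_x A$ is by definition the indexed hom $\lC^{\strsl x \I}(\I(x,-),A)$, so \cref{thm:homs} applied to the fibrant down-closed subcategory $\strsl x\I$ (with $\I(x,-)$ Reedy fibrant by \cref{defn:invcat-fibrant}) yields existence, the prefibration property, and the fibration conclusion under Reedy fibrancy of $A$ alone. The slice-inheritance point you flag at the end (pre-Reedy limits in $\lC^{\I(x)\times X}$ rather than in \C itself) is the same bookkeeping the paper handles silently, both here and in the inductive step of \cref{thm:homs}, so your treatment matches its level of rigor.
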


Thus, under the hypotheses of \cref{thm:matching}, the assumption in \cref{defn:reedy-fibration} that the matching objects exist is unnecessary for Reedy prefibrant objects:
If $A$ is Reedy (pre)fibrant below some stage $x$, then $M_x A$ automatically exists.
Also, the third condition in \cref{thm:reedy-char} is unneeded for defining Reedy prefibrant objects, since each map in the composite shown is a fibration or a prefibration.
We record this:

\begin{cor}\label{thm:matching-gluing}
  If \I is fibrant and $\I_0\op$ is pre-admissible for \C, $x\in\I_0$, and $A\in (\lC^{\strsl x \I})^X$ is Reedy prefibrant, to extend $A$ to a Reedy prefibrant object of $(\lC^{x/\I})^X$ we must give
  (i) an object $A_x$ and
  (ii) a prefibration $A_x \to M_x A$. 
  \qed
\end{cor}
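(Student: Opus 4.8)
The plan is to read this corollary off directly from \cref{thm:reedy-char,thm:matching}, so the work is almost entirely bookkeeping. First I would invoke \cref{thm:matching}: since \I is fibrant, $\I_0\op$ is pre-admissible for \C, and $A$ is Reedy prefibrant, the matching object $M_x A = \lC^{\strsl x\I}(\I(x,-),A)$ exists and its structure map $M_x A \to \I(x)\times X$ is a prefibration. This verifies the existence hypothesis of \cref{thm:reedy-char}, which then tells me that extending $A$ to an object of $(\lC^{x/\I})^X$ is the same as giving an object $A_x$, a map $A_x \to M_x A$, and a proof that the composite $A_x \to M_x A \to \I(x)\times X \to X$ is a prefibration.

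Next I would identify which such extensions are Reedy prefibrant. For any $y\prec x$ we have $\strsl y{(x/\I)} = \strsl y\I$, so the matching object $M_y$ of the extended diagram coincides with that of $A$, and the Reedy condition at $y$ is inherited unchanged from the hypothesis that $A$ is Reedy prefibrant. The only genuinely new stage is the top element $x$, whose matching object is by definition $M_x A$; hence the extension is Reedy prefibrant precisely when the chosen map $A_x \to M_x A$ is a prefibration. This is exactly item (ii) of the claim, and with item (i) it exhausts the data.

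Finally I would check that requiring $A_x \to M_x A$ to be a prefibration renders the third condition of \cref{thm:reedy-char} automatic, which is what lets me omit it. Indeed $\I(x)\times X \to X$ is a pullback of $\I(x)\to 1$ and so is a prefibration; combined with the prefibration $M_x A \to \I(x)\times X$ supplied by \cref{thm:matching} and the assumed prefibration $A_x \to M_x A$, the composite to $X$ is a composite of prefibrations, hence again a prefibration because the prefibrations form a subcategory. There is no real obstacle here; the only points that need care are confirming that the lower-stage matching objects are genuinely unchanged (so that no additional fibrancy conditions are introduced below $x$) and that it is \cref{thm:matching}, rather than \cref{thm:reedy-char} alone, that provides the prefibrancy of $M_x A \to \I(x)\times X$ needed to absorb condition (iii).
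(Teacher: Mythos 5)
Your proposal is correct and follows essentially the same route as the paper: invoke \cref{thm:matching} to get existence and prefibrancy of $M_x A \to \I(x)\times X$, apply \cref{thm:reedy-char} to characterize extensions, note that matching objects below $x$ are unchanged, and observe that condition (iii) of \cref{thm:reedy-char} becomes automatic because the composite $A_x \to M_x A \to \I(x)\times X \to X$ is a composite of (pre)fibrations. The paper records precisely this reasoning in the paragraph preceding the corollary, which is why it is stated with an immediate \qed.
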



\begin{thm}\label{thm:homs-fibration}
  Suppose \I is fibrant, $A\in (\lC^\I)^X$ is Reedy fibrant, $B,B'\in (\lC^\I)^Y$ are Reedy prefibrant, and $\lC^{X\times Y}$ has pre-Reedy $\I_0\op$-limits.
  If $g:B \to B'$ is a Reedy fibration, then the induced map $\lC^\I(A,B) \to \lC^\I(A,B')$ is a fibration.
\end{thm}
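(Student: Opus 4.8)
The plan is to reduce to a condition at each object $x\in\I_0$ and then pass to the limit, closely following \cref{thm:homs}. By \cref{thm:hom-limit}, $\lC^\I(A,B)$ and $\lC^\I(A,B')$ are the $\I_0\op$-limits of the diagrams $x\mapsto\lC^{x/\I}(A,B)$ and $x\mapsto\lC^{x/\I}(A,B')$, which are Reedy prefibrant by \cref{thm:homs}; moreover \cref{thm:hom-limit} identifies the matching object of the first diagram at $x$ with $\lC^{\strsl x\I}(A,B)$, and similarly for $B'$. Hence, by part~\ref{item:rlim2} of \cref{defn:prereedy-limits} (using the hypothesis that $\lC^{X\times Y}$ has pre-Reedy $\I_0\op$-limits), it suffices to show that the induced map of $\I_0\op$-diagrams is a Reedy fibration. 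By \cref{defn:reedy-fibration} and the identification of matching objects just mentioned, this amounts to showing that for each $x$ the map
\[ \lC^{x/\I}(A,B) \longrightarrow Q, \qquad Q := \lC^{x/\I}(A,B')\times_{\lC^{\strsl x\I}(A,B')}\lC^{\strsl x\I}(A,B), \]
is a fibration, where $Q$ is formed using the functoriality map $\lC^{\strsl x\I}(A,B)\to\lC^{\strsl x\I}(A,B')$.

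For this per-object claim I would unwind the explicit construction of $\lC^{x/\I}(A,B)$ from \cref{thm:homs} and \cref{fig:homs}, namely $\lC^{x/\I}(A,B)\cong(\pi_2)_*\big((c^*B_x)^{\pi_1^*A_x}\big)$ over $\lC^{\strsl x\I}(A,B)$. Using the Beck--Chevalley isomorphism for dependent products along the pullback square defining $M_xA\times_X\lC^{\strsl x\I}(A,B)$ over $\lC^{\strsl x\I}(A,B)$, together with the fact that local exponentials commute with pullback, I would identify $Q$ with $(\pi_2)_*$ of the pullback of the corresponding local exponential for $B'$, and identify the map $\lC^{x/\I}(A,B)\to Q$ with $(\pi_2)_*$ applied to the map of local exponentials induced by
\[ c^*B_x \longrightarrow c^*\big(B'_x\times_{M_x B'} M_x B\big), \]
using naturality in $B$ of the composition map $c$ of \cref{fig:homs}. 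This displayed map is the pullback along $c$ of the Reedy-fibration map $B_x\to B'_x\times_{M_x B'}M_x B$, which is a fibration because $g$ is a Reedy fibration; hence the displayed map is itself a fibration.

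It then remains to propagate this fibration through the local exponential and the outer dependent product. Since $A$ is Reedy fibrant, the exponent $\pi_1^*A_x$ and the projection $\pi_2$ are fibrations, so both the local exponential $(-)^{\pi_1^*A_x}$ (a dependent product along $\pi_1^*A_x$) and $(\pi_2)_*$ carry fibrations to fibrations by \cref{thm:dp-pres-fib}. This yields that $\lC^{x/\I}(A,B)\to Q$ is a fibration, which is exactly the Reedy-fibration condition needed above; the limit map $\lC^\I(A,B)\to\lC^\I(A,B')$ is then a fibration by part~\ref{item:rlim2} of \cref{defn:prereedy-limits}, as desired.

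I expect the main obstacle to be the bookkeeping in the second paragraph: correctly setting up the Beck--Chevalley identification over the changing base $\lC^{\strsl x\I}(A,B)$, pinning down the induced map of local exponentials, and recognizing both pushforward steps in the third paragraph as instances of \cref{thm:dp-pres-fib} with the correct variance. None of this should require a new idea beyond those already present in \cref{thm:homs} and \cref{thm:dp-pres-fib}.
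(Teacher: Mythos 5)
Your proposal is correct and takes essentially the same route as the paper's proof: reduce via \cref{thm:hom-limit} and pre-Reedy $\I_0\op$-limits to showing that each map $\lC^{x/\I}(A,B) \to \lC^{x/\I}(A,B') \times_{\lC^{\strsl x \I}(A,B')} \lC^{\strsl x \I}(A,B)$ is a fibration, and then obtain that map from the Reedy-fibration map $B_x\to M_xB\times_{M_xB'}B'_x$ by applying $c^*$, the local exponential $(-)^{\pi_1^*A_x}$, and $(\pi_2)_*$, identifying the codomain via the Beck--Chevalley isomorphism for the square comparing $\pi_2$ and $\pi_2'$. The only cosmetic differences are that the paper wraps this construction in a well-founded induction (as in \cref{thm:homs}) and leaves the fibration-preservation of the local exponential and of $(\pi_2)_*$ implicit, whereas you correctly ground both in \cref{thm:dp-pres-fib}.
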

\begin{proof}
  As in \cref{thm:homs}, we assume the statement for all $\strsl x \I$ and prove it for \I.

  In \cref{thm:hom-limit} we constructed $\lC^\I(A,B)$ as a limit of $\lC^{x/\I}(A,B)$ over $\I_0\op$.
  Since \C has pre-Reedy $\I_0\op$-limits, to show the statement it suffices to show that the induced map of $\I_0\op$-diagrams is a Reedy fibration.
  In \cref{thm:homs} we identified the ordinary matching objects of $x\mapsto \lC^{x/\I}(A,B)$ with $\lC^{\strsl x \I}(A,B)$, so we must show that
  \begin{equation}
    \lC^{x/\I}(A,B) \too \lC^{x/\I}(A,B') \times_{\lC^{\strsl x \I}(A,B')} \lC^{\strsl x \I}(A,B)\label{eq:homsfib-map}
  \end{equation}
  is a fibration.
  For this purpose we construct~\eqref{eq:homsfib-map} as follows.

  Let $c'$, $\pi_1'$, and $\pi_2'$ denote the morphisms for $B'$ analogous to $c$, $\pi_1$, and $\pi_2$ for $B$.
  We start with the map $B_x \to M_x B \times_{M_x B'} B_x' = (M_x g)^* B_x'$, which is a fibration since $g$ is a Reedy fibration, and lies in the slice over $M_x B$.
  Applying $c^*$, which preserves fibrations, and noting that $M_x g\circ c= c' \circ \left(1\times \lC^{\strsl x \I}(A,g)\right)$, we get a fibration
  \begin{equation}
    c^* B_x \to c^* (M_x g)^* B_x'
    \cong \left(1\times \lC^{\strsl x \I}(A,g)\right)^* (c')^* B_x'.\label{eq:mpf3}
  \end{equation}
  over $M_x A \times_X \lC^{\strsl x \I}(A,B)$.
  Since $A$ is Reedy fibrant, $\pi_1^*A_x \to M_x A \times_X \lC^{\strsl x \I}(A,B)$ is a fibration, so the local exponential by it preserves fibrations.
  Applying this local exponential to~\eqref{eq:mpf3}, we obtain a fibration
  \begin{align*}
    (c^* B_x)^{\pi_1^*A_x}
    &\to \left(\left(1\times \lC^{\strsl x \I}(A,g)\right)^* (c')^* B_x'\right)^{\pi_1^*A_x}\\
    &\cong \left(1\times \lC^{\strsl x \I}(A,g)\right)^* {\left(((c')^*B'_x)^{{\pi'_1}^*A_x}\right)}
  \end{align*}
  where the isomorphism is because $\pi_1' \circ \left(1\times \lC^{\strsl x \I}(A,g)\right) = \pi_1$ and pullback preserves local exponentials.
  Now we can obtain~\eqref{eq:homsfib-map} as the composite
  \begin{align*}
    \lC^{x/\I}(A,B)
    &= (\pi_2)_* (c^* B_x)^{\pi_1^*A_x}\\
    &\to (\pi_2)_* \left(1\times \lC^{\strsl x \I}(A,g)\right)^* {\left(((c')^*B'_x)^{{\pi'_1}^*A_x}\right)}\\
    &\cong \left(\lC^{\strsl x \I}(A,g)\right)^* (\pi_2')_* {\left(((c')^*B'_x)^{{\pi'_1}^*A_x}\right)}\\
    &= \left(\lC^{\strsl x \I}(A,g)\right)^* \left(\lC^{x/\I}(A,B')\right)\\
    &= \lC^{x/\I}(A,B') \times_{\lC^{\strsl x \I}(A,B')} \lC^{\strsl x \I}(A,B).
  \end{align*}
  Here the isomorphism is the Beck-Chevalley isomorphism for the pullback square
  \begin{equation*}\label{eq:mpf1.5}
  \vcenter{\xymatrix{
      M_x A \times_X \lC^{\strsl x \I}(A,B) \ar[r]^-{\pi_2} \ar[d]_{1\times \lC^{\strsl x \I}(A,g)} &
      \lC^{\strsl x \I}(A,B)\ar[d]^{\lC^{\strsl x \I}(A,g)}\\
      M_x A \times_X \lC^{\strsl x \I}(A,B')\ar[r]_-{\pi_2'} &
      \lC^{\strsl x \I}(A,B').
      }}
  \end{equation*}
  Since $A$ is Reedy fibrant, $\pi_2$ is a fibration, so $(\pi_2)_*$ preserves fibrations; thus the above composite is a fibration, as desired.
\end{proof}

\begin{cor}\label{thm:matching-fibration}
  If \I is fibrant and $\I_0\op$ is pre-admissible for \C, and $A\to B$ is a Reedy fibration in $(\lC^{\strsl x \I})^X$, then the induced map $M_x A \to M_x B$ is a fibration.\qed
\end{cor}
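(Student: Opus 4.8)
The plan is to deduce this as the ``fibration'' refinement of \cref{thm:matching}, obtaining it from \cref{thm:homs-fibration} in exactly the way that \cref{thm:matching} is obtained from \cref{thm:homs}. Recall that by definition $M_x A = \lC^{\strsl x \I}(\I(x,-),A)$ and $M_x B = \lC^{\strsl x \I}(\I(x,-),B)$, and that the induced comparison $M_x A \to M_x B$ is nothing but the image of the given Reedy fibration under the indexed-hom functor $\lC^{\strsl x \I}(\I(x,-),\blank)$. So it suffices to show that this induced map on hom-objects is a fibration.

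First I would apply \cref{thm:homs-fibration} to the \C-inverse category $\strsl x\I$ in place of \I, taking the Reedy fibrant first argument to be $\I(x,-)\in(\lC^{\strsl x \I})^{\I(x)}$, taking the two Reedy prefibrant objects over $X$ to be $A$ and $B$, and instantiating the Reedy fibration $g\colon B\to B'$ of the theorem by the given $A\to B$. The hypotheses are readily checked: $\strsl x\I$ is fibrant, being a down-closed full \C-inverse subcategory of the fibrant \I; and $\I(x,-)$ is Reedy fibrant precisely by the definition of fibrancy of \I (\cref{defn:invcat-fibrant}). The objects $A$ and $B$ are Reedy prefibrant in the ambient setting. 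The remaining hypothesis is that $\lC^{\I(x)\times X}$ have pre-Reedy $(\strsl{\I_0}{x})\op$-limits, which is supplied by the pre-admissibility of $\I_0\op$ for \C, exactly as in the deduction of \cref{thm:matching}.

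\Cref{thm:homs-fibration} then yields that $\lC^{\strsl x \I}(\I(x,-),A)\to \lC^{\strsl x \I}(\I(x,-),B)$ is a fibration, and by the definition of $M_x$ as the indexed hom-object out of $\I(x,-)$ this map is precisely $M_x A \to M_x B$, as required. There is essentially no obstacle beyond verifying that the substituted data meet the hypotheses of \cref{thm:homs-fibration}; the only point needing a moment's care is the bookkeeping that identifies the hom-object map produced by the theorem with the matching-object map $M_x(g)$, which is immediate from the identification of $M_x$ with $\lC^{\strsl x \I}(\I(x,-),\blank)$.
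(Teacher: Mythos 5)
Your proposal is correct and is exactly the paper's intended argument: \cref{thm:matching-fibration} is stated without proof, the deduction being precisely an application of \cref{thm:homs-fibration} to $\strsl x \I$ with Reedy fibrant first argument $\I(x,-)$ and with $g$ taken to be the given Reedy fibration, mirroring how \cref{thm:matching} follows from \cref{thm:homs}. Your implicit assumption that $A$ and $B$ are Reedy prefibrant is likewise implicit in the paper's statement, since it is what guarantees (via \cref{thm:matching}) that $M_x A$ and $M_x B$ exist at all.
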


\begin{cor}\label{thm:fib-objwise}
  If \I is fibrant and $\I_0\op$ is pre-admissible for \C, and $A\to B$ is a Reedy fibration, then each $A_x \to B_x$ is also a fibration.\qed
\end{cor}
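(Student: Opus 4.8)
The plan is to exhibit each component map $A_x\to B_x$ as a composite of two fibrations, using the pullback that already appears in the definition of a Reedy fibration (\cref{defn:reedy-fibration}) together with the preceding \cref{thm:matching-fibration}. Fix $x\in\I_0$. By naturality of the matching map, the square with corners $A_x$, $B_x$, $M_x A$, $M_x B$ commutes, so the given map $A_x\to B_x$ factors as
\[ A_x \too B_x \times_{M_x B} M_x A \too B_x, \]
where the first arrow is the comparison map and the second is the projection.

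The first arrow is a fibration immediately, since $A\to B$ is a Reedy fibration and this comparison map is precisely the map required to be a fibration in \cref{defn:reedy-fibration}. For the second arrow, I would observe that the projection $B_x \times_{M_x B} M_x A \to B_x$ is by construction the pullback of the induced map $M_x A \to M_x B$ along the matching map $B_x\to M_x B$; since pullbacks of fibrations are fibrations by our standing hypotheses on \C, it suffices to know that $M_x A \to M_x B$ is a fibration.

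That last fact is exactly \cref{thm:matching-fibration}, and the only point needing care is checking its hypothesis, namely that the relevant map of $(\strsl x \I)$-diagrams is a Reedy fibration. Here I would invoke the observation following \cref{defn:invcat-fibrant}: because $\strsl x\I$ is down-closed and $\strsl y {(\strsl x \I)} = \strsl y \I$ for $y\prec x$, restriction to $\strsl x\I$ leaves all matching objects and their comparison maps unchanged. Hence the restriction of $A\to B$ to $\strsl x\I$ is again a Reedy fibration in $(\lC^{\strsl x \I})^X$, and since \I is fibrant and $\I_0\op$ is pre-admissible for \C, \cref{thm:matching-fibration} applies to give that $M_x A \to M_x B$ is a fibration. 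Pulling back and composing then shows $A_x\to B_x$ is a fibration, and since $x$ was arbitrary this holds at every stage.

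I do not expect any genuine obstacle: this corollary is a formal consequence of \cref{thm:matching-fibration} and the pullback-stability of fibrations, with all the real work already done in establishing that matching maps of a Reedy fibration are fibrations. The only bookkeeping subtlety is the verification that restriction to $\strsl x\I$ preserves the Reedy fibration property, which is immediate from the invariance of matching objects under down-closed restriction.
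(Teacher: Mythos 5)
Your proof is correct and is precisely the argument the paper intends by leaving this corollary with a bare \qed: factor $A_x \to B_x$ through $B_x \times_{M_x B} M_x A$, use the Reedy fibration condition of \cref{defn:reedy-fibration} for the first leg, and obtain the second leg as a pullback of the matching-object fibration $M_x A \to M_x B$ supplied by \cref{thm:matching-fibration}. Your bookkeeping check that restriction to the down-closed subcategory $\strsl x \I$ preserves matching objects, and hence the Reedy fibration property, is exactly the right point to verify and matches the paper's earlier observation following \cref{defn:invcat-fibrant}.
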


\section{Model categories of inverse diagrams}
\label{sec:model}

Now let \C be a \emph{type-theoretic model category}; as in~\cite{shulman:invdia} this means a right proper model category in which limits preserve cofibrations and pullback along any fibration has a right adjoint.
We apply the theory of \cref{sec:category-theory} with the model-categorical fibrations as the fibrations and all morphisms as the prefibrations.
We observe:

\begin{lem}
  A type-theoretic model category has pre-Reedy $I\op$-limits for any $I$.
  Hence any $I$ is pre-admissible for \C.
\end{lem}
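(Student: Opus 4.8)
The plan is to dispatch condition~(i) of \cref{defn:prereedy-limits} immediately and put all the work into condition~(ii). Since every morphism of \C is a prefibration, every $A\in\C^{I\op}$ is automatically Reedy prefibrant: each matching object $M_x A=\lim_{(\strsl I x)\op}A$ exists because a model category is complete, and each $A_x\to M_x A$ is then a prefibration for free. Completeness likewise supplies the limits demanded by~(i), so~(i) is vacuous here. All the content is in~(ii): given a Reedy fibration $f:A\to B$, I must show $\lim_{I\op}f$ is a fibration. Conceptually this is the assertion that, presenting $I\op$ as an inverse Reedy category via a rank function $d:I\to\mathrm{Ord}$ (available because $\prec$ is well-founded), the functor $\lim$ is right Quillen; but I would argue directly so as to stay inside the elementary framework of \cref{sec:category-theory}.

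First I would filter $I$ by rank, setting $I_{<\alpha}=\setof{x|d(x)<\alpha}$, a nested family of down-closed subposets with $I_{<\alpha}=\bigcup_{\beta<\alpha}I_{<\beta}$ at limit ordinals and $I_{<\lambda}=I$ for a limit ordinal $\lambda$ exceeding all ranks. Writing $L_\alpha=\lim_{(I_{<\alpha})\op}$, the objects $L_\beta A$ for $\beta<\lambda$ form a tower over $\lambda\op$ whose inverse limit is $\lim_{I\op}A$, so $\lim_{I\op}f=\lim_{\beta<\lambda}L_\beta f$. I would then show that this tower map is a \emph{Reedy fibration of towers}, i.e.\ that each relative matching map $L_{\beta+1}A\to L_{\beta+1}B\times_{L_\beta B}L_\beta A$ is a fibration; the relative matching maps at the zero and limit stages are maps out of the terminal object or identities, hence automatic.

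The key computation is the successor step. The rank-$\beta$ elements $S_\beta=\setof{x|d(x)=\beta}$ form an antichain all of whose predecessors lie in $I_{<\beta}$, so extending a cone on $I_{<\beta}$ to one on $I_{<\beta+1}$ consists, independently for each $x\in S_\beta$, of a lift against $A_x\to M_x A$. This gives the pullback presentation
\[ L_{\beta+1}A\;\cong\;L_\beta A\times_{\prod_{x\in S_\beta}M_x A}\prod_{x\in S_\beta}A_x, \]
and likewise for $B$. Chasing the two presentations identifies the relative matching map $L_{\beta+1}A\to L_{\beta+1}B\times_{L_\beta B}L_\beta A$ as a pullback of the product $\prod_{x\in S_\beta}\bigl(A_x\to B_x\times_{M_x B}M_x A\bigr)$, whose factors are exactly the maps witnessing that $f$ is a Reedy fibration. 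Because the fibrations of a model category form a right-lifting class, they are closed under arbitrary products and under pullback, so this relative matching map is a fibration. The standard fact that the inverse limit of a tower whose relative matching maps are all fibrations is again a fibration (closure of a right-lifting class under inverse transfinite composition) then yields that $\lim_{I\op}f$ is a fibration.

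The bookkeeping is routine; the genuine obstacle — and the reason the hypotheses of \cref{sec:category-theory} alone do not suffice — is the infinitary character of the construction. The antichains $S_\beta$ and the ordinal $\lambda$ may be infinite, so I must know that fibrations are stable under \emph{arbitrary} products and under transfinite inverse limits, and this is precisely where being a model category (with fibrations characterized by a lifting property) is used. It is also exactly what the finite case (the preceding lemma) does not reach: that lemma disposes of each rank layer only when $S_\beta$ is finite, and the present statement is the removal of that finiteness restriction. Notably, right properness plays no role in this lemma; only completeness and the lifting-property structure of \C are used.
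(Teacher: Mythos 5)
Your proof is correct, but it takes a genuinely different route from the paper's. The paper disposes of this lemma in one line, citing~\cite[Lemma 11.5]{shulman:invdia} for the fact that $\lim:\C^{I\op}\to\C$ is right Quillen; the underlying soft argument is that $I\op$ is an inverse Reedy category, so Reedy cofibrations in $\C^{I\op}$ are levelwise, so the constant-diagram functor is left Quillen, and its right adjoint $\lim$ therefore preserves (Reedy) fibrations, with completeness supplying condition (i). You instead unpack this into a self-contained lifting-property verification: the rank filtration $I_{<\beta}$, the successor-stage presentation $L_{\beta+1}A\cong L_\beta A\times_{\prod_{x\in S_\beta}M_xA}\prod_{x\in S_\beta}A_x$, the identification of the relative matching map of the tower as a pullback of $\prod_{x\in S_\beta}\bigl(A_x\to B_x\times_{M_xB}M_xA\bigr)$, and closure of fibrations---a class defined by a right lifting property---under products, pullbacks, and limits of continuous towers; both pullback identifications you assert do check out. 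What the paper's route buys is brevity and reuse of established Reedy theory; what yours buys is independence from the existence of the Reedy model structure on $\C^{I\op}$, an explicit accounting of exactly which closure properties of fibrations are used, and transparency for your closing (correct) observation that neither right properness nor the ``limits preserve cofibrations'' axiom of type-theoretic model categories plays any role here---plain completeness plus the lifting characterization of fibrations suffices, which is equally true of the argument the paper cites. One point you should make explicit: the ``standard fact'' invoked at the end is itself the $\lambda\op$ instance of the statement being proven, so to avoid any appearance of circularity, note that it is proved directly by transfinite induction on lifting problems---lifts exist at successor stages because the relative matching maps are fibrations, and are forced at limit stages because your towers are continuous there by construction. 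With that sentence added, the argument is complete.
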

\begin{proof}
  As in~\cite[Lemma 11.5]{shulman:invdia}, the limit functor is right Quillen.
\end{proof}

Our goal is to prove the following.

\begin{thm}\label{thm:model-structure}
  If \C is a type-theoretic model category and \I is a fibrant \C-inverse category, then $\C^\I$ is a model category in which:
  \begin{itemize}
  \item As in \cref{defn:reedy-fibration}, $A\to B$ is a fibration or acyclic fibration if each map $A_x \to B_x \times_{M_x B} M_x A$ is so.
  \item The cofibrations, weak equivalences, and acyclic cofibrations are levelwise.
  \end{itemize}
\end{thm}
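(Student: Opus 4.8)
The plan is to present $\C^\I$ as a model category by constructing two weak factorization systems stagewise and then checking one compatibility between them, rather than transferring a structure wholesale. I first note that in this setting all matching objects exist: since the prefibrations are all maps, every object of $\C^\I$ is Reedy prefibrant, so \cref{thm:matching} (with $\I_0\op$ pre-admissible, as holds for any type-theoretic model category) makes each $M_xA=\lC^{\strsl x\I}(\I(x,\blank),A)$ an actual object of $\C$ and each relative matching map $A_x\to B_x\times_{M_xB}M_xA$ an actual morphism. The formal axioms are then quick: $\C^\I$ is bicomplete because $\C$ is and (co)limits of $\I$-diagrams are computed levelwise; two-out-of-three and closure under retracts hold for the levelwise classes directly from $\C$; and the Reedy fibrations and acyclic fibrations are retract-closed because a retract of a map in $\C^\I$ induces, by functoriality of $M_x$ and of the pullback, a retract of each of its relative matching maps in $\C$.

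The two weak factorization systems, with left classes the levelwise cofibrations and the levelwise acyclic cofibrations and right classes the Reedy acyclic fibrations and the Reedy fibrations, I would build by well-founded recursion on $x\in\I_0$, invoking \cref{thm:wf} to ensure the stagewise choices cohere into genuine diagrams and diagram maps. This is the iterated gluing of \cref{thm:collage}: by \cref{thm:matching-gluing}, extending from $\strsl x\I$ to $x/\I$ is exactly the choice of an object $A_x$ together with a map $A_x\to M_xA$. To lift a levelwise cofibration $i\colon C\to D$ against a Reedy acyclic fibration $p\colon A\to B$, I construct the lift one stage at a time: the part already chosen below $x$ determines, via $M_x$, the bottom edge $D_x\to B_x\times_{M_xB}M_xA$ of a square whose left edge is the cofibration $C_x\to D_x$ and whose right edge is the acyclic fibration $A_x\to B_x\times_{M_xB}M_xA$; a lift exists in $\C$ and is precisely the datum extending the lift to $x/\I$. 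The dual lifting and the two factorizations are produced the same way, factoring at each stage the induced map into $B_x\times_{M_xB}M_xE$ in $\C$, and the retract argument upgrades these into honest weak factorization systems. This part uses no homotopical input beyond the two weak factorization systems of $\C$.

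It remains to see that these assemble into a model structure whose weak equivalences are the levelwise ones, for which I must identify the right classes as $\text{Reedy fibration}\cap\text{weak equivalence}$ and $\text{Reedy acyclic fibration}$ (the corresponding identification of acyclic cofibrations being immediate levelwise). I would prove this by the same well-founded induction, and the one nonformal ingredient — the step I expect to be the main obstacle — is that the matching functor $M_x\colon\C^{\strsl x\I}\to\C/\I(x)$ preserves acyclic fibrations. This follows from the collage description of \cref{thm:collage}: $M_x$ is right adjoint to the functor $L$ that tensors with the profunctor $\I(x,\blank)$, and $L$ preserves cofibrations because it is built from pullbacks and pullback preserves cofibrations in a type-theoretic model category; since acyclic fibrations are exactly the maps with the right lifting property against cofibrations, the adjunction transports this to $M_x$. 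Granting it, both directions are short. A Reedy acyclic fibration restricts on $\strsl x\I$ to an acyclic fibration by the inductive characterization, so $M_xA\to M_xB$ is an acyclic fibration; its pullback $B_x\times_{M_xB}M_xA\to B_x$ is then acyclic, whence each $A_x\to B_x$ is a weak equivalence. Conversely a Reedy fibration that is a levelwise weak equivalence restricts to an acyclic fibration on $\strsl x\I$ (again by induction), $M_x$ carries it to an acyclic fibration $M_xA\to M_xB$, right properness makes $B_x\times_{M_xB}M_xA\to B_x$ a weak equivalence, and two-out-of-three forces the relative matching map to be one. With both compatibilities in hand the standard criterion produces the model structure, the infinite case being subsumed by the same recursion, and relying throughout on the fibrancy of $\I$ to keep matching objects existing and well behaved, as in \cref{thm:matching,thm:matching-fibration}.
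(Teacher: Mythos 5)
Your proposal is correct, but it takes a genuinely different route from the paper's. The paper does not build the weak factorization systems by hand; instead it runs a well-founded induction over the poset \cinv of \C-inverse categories (via \cref{thm:wf}), presenting $\C^{x/\I}$ as the glued model category $(\C\dn M_x)$ using \cref{thm:model-gluing,thm:matching-model-gluing}, and then obtaining the model structure on $\C^\I$ as a \emph{limit} of the cloven model categories $\C^{x/\I}$ via \cref{thm:model-limits} --- the cloven/strict-functor bookkeeping exists precisely to make that limit legitimate. Your stagewise construction of lifts and factorizations (one application of \cref{thm:wf} per map, using \cref{thm:reedy-char,thm:matching-gluing} at each stage) replaces all of that machinery with the classical Reedy argument internalized, which is more elementary and self-contained. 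The deeper divergence is in the shared key lemma that $M_x$ preserves acyclic fibrations: the paper proves this (\cref{thm:matching-model-gluing}) by rerunning the explicit construction of \cref{thm:homs-fibration}, observing that pullbacks, local exponentials along fibrations, and dependent products along fibrations all preserve acyclic fibrations (which leans on right properness, since it needs pullback along fibrations to be left Quillen); you instead exhibit the left adjoint $L(Z)=Z\times_{\I(x)}\I(x,\blank)$ latent in \cref{thm:collage}, note that $L$ preserves cofibrations by pullback-stability of cofibrations, and transpose lifting problems --- a slicker argument that needs only the inductively established characterization of Reedy acyclic fibrations in $\C^{\strsl x\I}$ as the maps with the right lifting property against levelwise cofibrations (so you must order the induction so that the WFS statement for $\strsl x\I$, itself a fibrant \C-inverse category, is available before the compatibility step at $x$; this works, but is worth stating explicitly). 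Two minor remarks: your invocation of right properness in the converse direction is unnecessary, since $M_xA\to M_xB$ is already an acyclic fibration and its pullback is automatically one; and what your approach gives up is the paper's packaging of the restriction functors $\C^\I\to\C^{x/\I}$ as strict functors between cloven structures, a uniform skeleton that the paper reuses essentially verbatim for \cref{thm:ttfc}, where no direct small-object-style construction is available and the gluing theorem carries the univalence content.
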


The proof will be by well-founded induction, using \cref{thm:reedy-char} and taking limits.
However, we can only construct limits of model structures that are sufficiently ``algebraic''.
Say that a model category is \textbf{cloven} if we have chosen particular factorizations and liftings; a \textbf{strict functor} between cloven model categories is one that preserves all three classes of maps and the chosen factorizations and lifts.

\begin{lem}[{\cite[{Theorem \ref{reedy:thm:clovenlim}}]{shulman:reedy}}]\label{thm:model-limits}
  The category of cloven model categories and strict functors has limits, which are created by the forgetful functor to \cCat.\qed
\end{lem}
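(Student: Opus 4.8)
The plan is to show that all the cloven model-categorical structure is computed \emph{componentwise} through the forgetful functor, with strictness of the transition functors supplying exactly the compatibility needed to assemble it. Given a diagram $D$ of cloven model categories and strict functors (the indexing shape is irrelevant), I would first form the limit $L=\lim_j D_j$ of the underlying categories in \cCat, with projection functors $P_j\colon L\to D_j$. Recall that an object (resp.\ morphism) of $L$ is a compatible family of objects (resp.\ morphisms) of the $D_j$, so in particular a morphism $f$ of $L$ is determined by its components $P_j(f)$.

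Next I would equip $L$ with a cloven model structure by declaring: $f$ is a cofibration, fibration, or weak equivalence iff each $P_j(f)$ is one in $D_j$; and the chosen factorization of $f$ of either type, together with the chosen solution to any lifting problem, are obtained by applying the corresponding chosen operation in each $D_j$ and assembling the results. The essential point — and the one place where strictness is used in earnest — is that a strict functor preserves the chosen factorizations and lifts \emph{on the nose}, so the componentwise choices agree with all the transition maps and therefore define genuine morphisms and objects of $L$. I would then verify the model-category axioms: two-out-of-three, closure under retracts, the correct typing of the factorizations, and the validity of the chosen lifts all reduce to the same assertions in each $D_j$, which hold by hypothesis. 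This makes $L$ a cloven model category with each $P_j$ strict by construction.

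The universal property in $\mathrm{ClovenModCat}$ is then immediate. Given a cone of strict functors $Q_j\colon E\to D_j$ out of a cloven model category $E$, the universal property of $L$ in \cCat yields a unique functor $Q\colon E\to L$ with $P_j Q=Q_j$, and $Q$ is strict: for each $f$ in $E$ and each $j$, the image $P_j(Q(f))=Q_j(f)$ lies in the appropriate class and respects the chosen factorizations and lifts (since $Q_j$ is strict), so by the componentwise definitions $Q(f)$ does as well. Hence $(L,(P_j))$ is the limit in $\mathrm{ClovenModCat}$.

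Finally, to see that this limit is \emph{created} by the forgetful functor, I would observe that the underlying category of $L$ is by construction the limit in \cCat, and that the lifted cloven structure is the unique one making every $P_j$ strict: strictness forces each chosen factorization and chosen lift in $L$ to have the prescribed $P_j$-components, and since a morphism of $L$ is pinned down by its components, these choices are determined; the three distinguished classes are then forced as well, since in any model category each is recovered by a lifting property against another together with the (now-determined) factorizations. I expect the only genuinely load-bearing step to be the assembly of componentwise factorizations and lifts into morphisms of $L$, which is precisely what strictness guarantees; everything else is bookkeeping.
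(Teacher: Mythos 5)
Your componentwise strategy is the right mechanism, and for the three classes, the chosen factorizations, and the chosen lifts your assembly argument is correct: strictness is exactly what makes the chosen data in the components into honest objects and morphisms of the limit category $L$. But there is a genuine gap: being a model category includes having (at least finite) limits and colimits, and you never verify that $L$ has any. This is not bookkeeping. Nothing in ``preserves the three classes and the chosen factorizations and lifts'' forces a strict functor to preserve limits or colimits even up to isomorphism, and a strict limit in \cCat of bicomplete categories along such functors can fail to be bicomplete. Concretely: give $\mathrm{Set}$ the model structure in which every map is a weak equivalence and a fibration and the cofibrations are the isomorphisms; both weak factorization systems are then (isomorphisms, all maps), so we may cleave it with the factorizations $f = f\circ \mathrm{id}$ and the (unique) diagonal fillers, after which \emph{every} functor $\mathrm{Set}\to\mathrm{Set}$ is strict---in particular the constant functors $\Delta_A$ and $\Delta_B$ at two distinct sets $A\neq B$. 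The strict pullback in \cCat of $\mathrm{Set}\xto{\Delta_A}\mathrm{Set}\xleftarrow{\Delta_B}\mathrm{Set}$ is the empty category, which admits no model structure at all. So under the abbreviated definition of ``cloven'' quoted in this paper (chosen factorizations and liftings only), the statement you are proving is literally false, and the verification you defer to ``the same assertions in each $D_j$'' cannot be completed.

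The missing idea, which the cited theorem of~\cite{shulman:reedy} builds into the notion (and which any correct proof needs, or an equivalent device), is that the cleavage also comprises \emph{chosen limits and colimits}, preserved on the nose by strict functors. With that amendment your argument extends verbatim: chosen (co)limits in $L$ are assembled componentwise, strict preservation guarantees that the component (co)limit cones form strictly compatible families and hence live in $L$, and their universal property is checked componentwise. This also explains why \cref{thm:model-gluing} hypothesizes that $F$ preserve limits: limits in $(\N\dn F)$ are then computed componentwise, so the projection to \M can be strict in this stronger sense, which is what the induction in \cref{thm:model-structure} actually uses. Once chosen (co)limits are added to the structure being transported, the remainder of your proof---the componentwise classes, factorizations, and lifts, the universal property of $(L,(P_j))$, and the uniqueness of the lift (your recovery of the classes from the now-determined factorizations via the retract argument is fine)---goes through as written.
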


\begin{lem}[{\cite[{Theorem \ref{reedy:thm:model}}]{shulman:reedy}}]\label{thm:model-gluing}
  If \M and \N are model categories and $F:\M\to\N$ preserves limits and acyclic fibrations, then the glued category $(\N\dn F)$ has a model structure in which
  \begin{itemize}
  \item A map from $N\to F M$ to $N' \to F M'$ is a weak equivalence, cofibration, or acyclic cofibration just when $M\to M'$ and $N\to N'$ are both such.
  \item A map from $N\to F M$ to $N' \to F M'$ is a fibration or acyclic fibration just when the induced map $N \to F M \times_{F M'} N'$ is a fibration or acyclic fibration, respectively.
  \end{itemize}
  If \M and \N are cloven, so is $(\N\dn F)$, and $(\N\dn F) \to \M$ is strict.
\end{lem}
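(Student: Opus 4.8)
The plan is to verify the model-category axioms for $(\N\dn F)$ directly, carrying out every construction one coordinate at a time: first in \M, then in \N using the pullback-corner map. I write an object of $(\N\dn F)$ as $(N\to FM)$ and a morphism as a commuting pair consisting of a map $M\to M'$ in \M and a map $N\to N'$ in \N. Thus a map should be declared a fibration (resp.\ acyclic fibration) exactly when $M\to M'$ is a fibration (resp.\ acyclic fibration) in \M and the induced map $N\to FM\times_{FM'}N'$ is a fibration (resp.\ acyclic fibration) in \N; the condition on $M\to M'$, tacit in the statement, is what will make the projection to \M strict.

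First I would establish completeness and cocompleteness. Colimits are formed coordinatewise, the structure map being obtained from the canonical comparison $\operatorname{colim}FM_i\to F\operatorname{colim}M_i$, and this requires no hypothesis on $F$. Limits are also coordinatewise, but here the structure map $\lim N_i\to\lim FM_i\cong F\lim M_i$ uses precisely that $F$ preserves limits; this is the first essential use of a hypothesis on $F$.

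Next I would reconcile the two descriptions of the acyclic classes: that the declared acyclic fibrations are exactly the maps that are simultaneously fibrations and weak equivalences, and dually on the cofibration side. On the cofibration side this is immediate, since cofibrations, weak equivalences, and acyclic cofibrations are all levelwise and the identity ``cofibration and weak equivalence iff acyclic cofibration'' holds separately in \M and \N. On the fibration side the crux is to show that $N\to FM\times_{FM'}N'$ is a weak equivalence whenever $M\to M'$ is an acyclic fibration: I factor it through the projection $FM\times_{FM'}N'\to N'$, which is the pullback along $N'\to FM'$ of the map $F(M\to M')$; since $F$ preserves acyclic fibrations this projection is an acyclic fibration, hence a weak equivalence, and two-out-of-three then gives the claim. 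I expect this to be the main obstacle, being the only genuinely nonformal step, and it is exactly what forces the hypothesis that $F$ preserve acyclic fibrations into the lemma.

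With this in hand the remaining axioms follow by the same coordinatewise method. To factor a map $(M\to M',\,N\to N')$, I first factor $M\to M''\to M'$ in \M of the required type, then form $P\coloneqq FM''\times_{FM'}N'$, take the canonical map $N\to P$ induced by $N\to FM\to FM''$ and by $N\to N'$, and factor it as $N\to N''\to P$ in \N of the matching type; equipping $N''$ with the structure map $N''\to P\to FM''$ yields the intermediate object, and the pullback-corner map of the second factor is precisely the chosen factor $N''\to P$. For the lifting axioms, given a lifting problem with left leg $A\to B$ an (acyclic) cofibration and right leg $X\to Y$ a(n acyclic) fibration, I first solve the lifting problem for $M_A\to M_B$ against $M_X\to M_Y$ in \M to obtain a lift $\ell_M\colon M_B\to M_X$; this produces a comparison map from $N_B$ into $FM_X\times_{FM_Y}N_Y$ against which the pullback-corner map $N_X\to FM_X\times_{FM_Y}N_Y$ of the right leg is a(n acyclic) fibration, so a lift $\ell_N$ exists in \N, and $(\ell_M,\ell_N)$ assembles into the required diagonal. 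Closure under retracts is routine, the levelwise classes being retract-closed and the pullback-corner operation sending retracts to retracts, and two-out-of-three for weak equivalences is inherited levelwise. Finally, if \M and \N are cloven, the factorizations and lifts just described are canonical in the chosen data, yielding a cleaving of $(\N\dn F)$; and the projection $(\N\dn F)\to\M$ preserves the three classes and these chosen factorizations and lifts by construction, hence is strict.
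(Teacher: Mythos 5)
Your proof is correct and takes essentially the same route as the paper's, which only sketches the argument (deferring to the companion paper \cite{shulman:reedy}): coordinatewise Reedy-style factorizations and lifts, with the single nonformal step --- that a Reedy fibration which is a levelwise weak equivalence has an acyclic-fibration corner map, proved by pulling back $F$ of an acyclic fibration along $N'\to FM'$ and applying two-out-of-three --- being precisely what the paper's sketch isolates as ``the assumption on $F$.'' You are also right that the condition that $M\to M'$ be a fibration (resp.\ acyclic fibration) is tacit in the statement and is what makes the projection $(\N\dn F)\to\M$ strict.
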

\begin{proof}[Sketch of proof]
  Limits and colimits in $(\N\dn F)$ are easy, and the two weak factorization systems are defined in the usual Reedy manner.
  The assumption on $F$ implies that a map in $(\N\dn F)$ is a weak equivalence and a fibration just when $N \to F M \times_{F M'} N'$ is an acyclic fibration, which ensures that the weak factorization systems fit together.
\end{proof}

\begin{lem}\label{thm:matching-model-gluing}
  Let \I be a fibrant \C-inverse category, $x\in \I$, and suppose $\C^{\strsl x \I}$ is a model category with the classes of maps from \cref{thm:model-structure}.
  Then $M_x:\C^{\strsl x \I} \to \C$ preserves limits and acyclic fibrations, and $(\C\dn M_x)$ is equivalent to $\C^{x/\I}$.
\end{lem}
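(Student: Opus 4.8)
The plan is to verify the two preservation properties --- which are exactly the hypotheses \cref{thm:model-gluing} requires of the gluing functor --- and then to read off the categorical equivalence from \cref{thm:reedy-char}. Throughout I regard $M_x$ as valued in the slice $\lC^{\I(x)}=\C/\I(x)$ (itself a type-theoretic model category), which is the category playing the role of $\N$ in \cref{thm:model-gluing}; note that $M_x$ of the terminal $\strsl x\I$-diagram is the terminal object $\I(x)$ of $\lC^{\I(x)}$, so it is in this slice, not in \C itself, that limits are preserved. For limit preservation, recall that $M_xA=\lC^{\strsl x\I}(\I(x,-),A)$ is by \cref{defn:locfib} the object representing $((p,q):Z\to\I(x)\times 1)\mapsto(\lC^{\strsl x\I})^Z(p^*\I(x,-),q^*A)$. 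With the first argument fixed this is limit-preserving in $A$: reindexing $q^*$ preserves limits, and the hom-functor $(\lC^{\strsl x\I})^Z(p^*\I(x,-),-)$ preserves limits. Since $\C^{\strsl x\I}$ is complete and every matching object exists (by \cref{thm:matching}, as \I is fibrant and every poset is pre-admissible here), the representing object of a pointwise limit of representables is the limit of the representing objects, giving $M_x(\lim_i A_i)\cong\lim_i M_x A_i$.

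For acyclic fibrations, the starting observation is that $M_xf=\lC^{\strsl x\I}(\I(x,-),f)$ with $\I(x,-)\in(\lC^{\strsl x\I})^{\I(x)}$ Reedy fibrant, since \I is fibrant. Taking the Reedy-fibrant argument of \cref{thm:homs-fibration} to be $\I(x,-)$ (the hypothesis that $\lC^{\I(x)\times 1}$ has pre-Reedy $\I_0\op$-limits holding automatically) already shows that a Reedy fibration $f$ is sent to a fibration. I would then obtain the acyclic case by re-running the proof of \cref{thm:homs-fibration} with ``fibration'' replaced by ``acyclic fibration'' throughout: if $f:B\to B'$ is a Reedy acyclic fibration then the initial map $B_x\to M_xB\times_{M_xB'}B'_x$ is an acyclic fibration, and each operation used in that proof --- pullback along $c$, the local exponential by the fibration $\pi_1^*A_x$, the dependent product $(\pi_2)_*$ along the fibration $\pi_2$, and the pre-Reedy $\I_0\op$-limit that assembles $\lC^\I(A,B)$ from the $\lC^{x/\I}(A,B)$ --- carries acyclic fibrations to acyclic fibrations in a type-theoretic model category (pullbacks of acyclic fibrations are acyclic fibrations, and dependent products along fibrations and limit functors are right Quillen). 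This is the step I expect to be the main obstacle, since the only preservation statement on hand, \cref{thm:homs-fibration}, is phrased for fibrations and its acyclic analogue must be extracted by reproducing the argument.

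Finally, the equivalence $(\C\dn M_x)\simeq\C^{x/\I}$ is a repackaging of \cref{thm:reedy-char}. Because every morphism is a prefibration in this section, every object of $\C^{\strsl x\I}$ is Reedy prefibrant and all matching objects exist, so the prefibration clause of \cref{thm:reedy-char} is automatic. Hence an object of $\C^{x/\I}=(\lC^{x/\I})^1$ is exactly an object $A\in\C^{\strsl x\I}$ equipped with an $A_x\in\lC^{\I(x)}$ and a map $A_x\to M_xA$ --- that is, an object of $(\lC^{\I(x)}\dn M_x)$ --- and the morphism clause of \cref{thm:reedy-char} matches morphisms identically. This functor is the identity on the $\C^{\strsl x\I}$-coordinate and on matching objects, so it identifies the gluing classes of \cref{thm:model-gluing} with the Reedy classes of \cref{defn:reedy-fibration}; combined with the two preservation properties, this is what lets \cref{thm:model-gluing} transport the desired model structure onto $\C^{x/\I}$ in the proof of \cref{thm:model-structure}.
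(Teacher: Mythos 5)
Your proof is correct, and its three main steps are exactly the paper's (very terse) proof: limits are preserved because indexed hom-functors preserve whatever limits the reindexing functors do; acyclic fibrations are preserved by re-running \cref{thm:homs-fibration} verbatim, noting that every ingredient there (pullback, local exponential along a fibration, dependent product along a fibration, pre-Reedy limits) also preserves acyclic fibrations; and the equivalence with $\C^{x/\I}$ is \cref{thm:reedy-char} at $X=1$, where the prefibration clause is vacuous. The one place you diverge is your opening observation about the slice, and it is a genuine sharpening rather than a detour: since $\lC^{\strsl x \I}(\I(x,-),A)$ lives in $\C/(\I(x)\times 1)$ by \cref{defn:locfib}, the functor $M_x$ viewed as landing in $\C$ sends the terminal diagram to $\I(x)$, so it preserves only \emph{connected} limits there, and the lemma's literal claim that $M_x\colon\C^{\strsl x \I}\to\C$ preserves all limits holds only after re-reading $M_x$ as valued in $\C/\I(x)$ --- exactly as you do. Your repair is complete because the two gluings coincide: any map $N\to M_xA$ in $\C$ lies over $\I(x)$ via the structure map of $M_xA$, and the $\C$-components of morphisms automatically respect these structure maps (as $M_x(v)$ is a map over $\I(x)$), so $(\C\dn M_x)\cong(\C/\I(x)\dn M_x)$; and since the slice model structure on $\C/\I(x)$ creates all three classes in $\C$, applying \cref{thm:model-gluing} to the slice-valued $M_x$ produces precisely the classes of \cref{thm:model-structure} on $\C^{x/\I}$, which is what the proof of \cref{thm:model-structure} needs. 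It would be worth recording that comma-category isomorphism explicitly, since the lemma as stated asserts the equivalence for $(\C\dn M_x)$ rather than for $(\C/\I(x)\dn M_x)$.
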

\begin{proof}
  Any hom-functor $\lD(A,-)$ preserves all limits that the reindexing functors of \lD do, so $M_x$ preserves all limits.
  It preserves acyclic fibrations by the same argument as in \cref{thm:homs-fibration}, since all the ingredients therein also preserve acyclic fibrations.
  The final statement follows from \cref{thm:reedy-char}.
\end{proof}

\begin{proof}[Proof of \cref{thm:model-structure}]
  Choose factorizations and liftings to make \C cloven.
  We argue by well-founded induction as in \cref{thm:homs,thm:homs-fibration}, but there are some subtleties.
  Firstly, since we need to carry along the cloven structures, we are not just proving a statement but constructing a function.
  Secondly, for the inductive step we will need to know not only that each $\C^{\strsl x\I}$ is a model category, but that these model structures ``fit together'' as $x$ varies; so we actually must construct a \emph{functor} on the well-founded \emph{poset} of \C-inverse categories.
  Thirdly, finding a codomain for this functor is a bit tricky.
  We might guess the category of cloven model categories and strict functors, so that our functor would send \I to $\C^\I$ and the relation $(\strsl x \I) \prec \I$ to a strict restriction functor $\C^\I \to \C^{\strsl x\I}$.
  But this doesn't seem to work, because to define $\C^\I$ as a limit of $\C^{x/\I}$ we need to know that the restriction functors $\C^{x/\I} \to \C^{y/\I}$ are also strict, which requires that the inductive hypothesis ``know'' something about $M_y$.

  Thus, we will actually define a \emph{dependently typed functor} as in \cref{thm:wf}, i.e.\ a section of some given functor $\Phi:\cZ \to \cinv$, where \cinv is the well-founded poset of \C-inverse categories.
  We let an object of \cZ over $\I\in\cinv$ be a cloven model structure on $\C^\I$ with the given fibrations, cofibrations, and weak equivalences.
  Note that by \cref{thm:model-gluing,thm:matching-model-gluing}, if we have such a model structure on $\C^{\strsl x \I}$, we can glue it along $M_x$ to get such a model structure on $\C^{x/\I}\cong (\C\dn M_x)$ such that the projection $\C^{x/\I} \cong (\C\dn M_x) \to \C^{\strsl x \I}$ is strict.
  We define a morphism of \cZ over $(\strsl x\I)\prec \I$ to be the assertion that $\C^\I \to \C^{x/\I}$ is a strict functor, when $\C^{x/\I} $ is structured by gluing $\C^{\strsl x \I}$ with \C along $M_x$ as in \cref{thm:model-gluing}.

  Now we apply \cref{thm:wf} to $\Phi$.
  Thus, assume a \C-inverse category \I and a section of $\Phi$ defined on $\strsl \cinv \I$, i.e.\ that $\C^{\strsl x \I}$ is a cloven model category for all $x\in \I$, and that if $y\prec x$ the functor $\C^{\strsl x \I} \to \C^{y/\I} \cong (\C\dn M_y)$ is strict.
  Our goal is to extend this section to $\cinv/\I$, i.e.\ to construct a cloven model structure on $\C^\I$ such that each $\C^\I \to \C^{x/\I} \cong (\C\dn M_x)$ is strict.
  As noted above, \cref{thm:model-gluing,thm:matching-model-gluing} give model structures on each $\C^{x/\I}$, and 
  by the inductive hypothesis, if $y\prec x$ then the composite $\C^{x/\I} \to \C^{\strsl x\I} \to \C^{y/\I}$ is strict.
  So we have a functor $x\mapsto \C^{x/\I}$ from $\I_0\op$ to cloven model categories and strict functors, whose limit in $\mathrm{Cat}$ is $\C^\I$.
  By \cref{thm:model-limits}, therefore, $\C^\I$ inherits the desired model structure.
\end{proof}

\section{EI \io-categories}
\label{sec:eicats}

Now we specialize further to the case $\C=\sSet$.
In this section we will compare $\sSet$-inverse categories to \io-categories; then in \cref{sec:homotopy-theory} we will extend this to a zigzag of Quillen equivalences relating the model structure of \cref{thm:model-structure} to well-known model structures for \io-presheaves.

\begin{defn}
  An \textbf{EI \io-category} is an \io-category in which every endomorphism is an equivalence.
  There is then an ordering $\prec$ on the equivalence classes of objects, where $x\prec y$ means that there is a noninvertible map $y\to x$.
  An \textbf{inverse EI \io-category} is an EI \io-category such that $\prec$ is well-founded, i.e.\ there are no infinite chains of noninvertible maps $\to\to\to\cdots$.
\end{defn}

In particular, any EI \io-category with finitely many objects is inverse.
An inverse EI \io-category that is a 1-category is still strictly more general than an ordinary inverse category (see e.g.~\cite{bm:extn-reedy}).

We will need to use the following model categories for \io-categories.
\begin{itemize}
\item The Joyal model structure on \sSet for \emph{quasicategories}~\cite{joyal:quasi,lurie:higher-topoi}.
\item The Rezk model structure~\cite{rezk:css} on bisimplicial sets \ssSet for \emph{complete Segal spaces}, and its analogue given by localizing the projective model structure instead of the injective one.
\item The Horel model structure on internal categories in \sSet~\cite{horel:model-intsscat}.
\end{itemize}
These are related by the following Quillen equivalences.
\begin{itemize}
\item The functor $i_1^*:\ssSet \to \sSet$ that takes the 0-simplices at each level is a right Quillen equivalence from the (injective) Rezk model structure to the Joyal model structure~\cite{jt:qcat-segal}.
\item The identity functor is a left Quillen equivalence from the projective Rezk model structure to the injective one.
\item The bisimplicial nerve $N$ of internal categories in \sSet is a right Quillen equivalence from the Horel model structure to the \emph{projective} Rezk model structure.
\end{itemize}

\begin{defn}\label{defn:sigma-i}
  For any \sSet-inverse category \I, define an internal category $\Sigma\I$ by:
  \begin{itemize}
  \item Its object-of-objects is $\Sigma\I_0 = \coprod_{x\in\I_0} \I(x)$.
  \item Its object-of-morphisms is $\Sigma\I_1 = \coprod_{y\prec x} \I(x,y) \sqcup \coprod_{x\in \I_0} \I(x)$.
  \item The source and target maps $\Sigma\I_1 \to \Sigma\I_0$ consist of the projections of the spans $\I(x) \ot \I(x,y) \to \I(y)$ along with the identity on each $\I(x)$.
  \item The identity-assigning map $\Sigma\I_0 \to \Sigma\I_1$ is the inclusion into the second summand.
  \item To define the composition map $\Sigma\I_1 \times_{\Sigma\I_0} \Sigma \I_1 \to \Sigma\I_1$, we observe that by stability of coproducts, its domain decomposes as a coproduct
    \begin{multline*}
      \coprod_{z\prec y\prec x} \I(y,z)\times_{\I(y)} \I(x,y) \sqcup
      \coprod_{y\prec x} \I(y)\times_{\I(y)} \I(x,y) \\ \sqcup
      \coprod_{y\prec x} \I(x,y)\times_{\I(x)} \I(x) \sqcup
      \coprod_x \I(x) \times_{\I(x)} \I(x)
    \end{multline*}
    and so we can put together the composition and identity maps of \I.
  \end{itemize}
\end{defn}

Thus, through the Horel model structure, a \sSet-internal category presents an \io-category.
The following definitions are lifted from~\cite{horel:model-intsscat}.

\begin{defn}\label{defn:ssegal}
  Let \I be a \sSet-inverse category and \K an internal category in \sSet.
  \begin{itemize}
  \item \K is \textbf{strongly Segal} if $\K_0$ is fibrant and the source and target maps $\K_1 \to \K_0$ are fibrations.
  \item \I is \textbf{strongly Segal} if each $\I(x)$ is fibrant and the source and target maps $\I(x,y) \to \I(x)$ and $\I(x,y) \to \I(y)$ are fibrations.
  \item \K is \textbf{Segal-fibrant} if $\K_0$ is fibrant and every pullback
    \( \overbrace{\K_1 \times_{\K_0} \cdots \times_{\K_0} \K_1}^{n} \)
    is fibrant and is a homotopy pullback.
  \item \I is \textbf{Segal-fibrant} if each $\I(x)$ is fibrant and every pullback
    \begin{equation}
      \overbrace{\I(x_{n-1},x_n) \times_{\I(x_{n-1})} \cdots \times_{\I(x_1)} \I(x_0,x_1)}^{n}\label{eq:sfibinv}
    \end{equation}
    is fibrant and is a homotopy pullback.
  \end{itemize}
\end{defn}

\begin{lem}[{\cite[Proposition 5.19]{horel:model-intsscat}}]\label{thm:sseg-sfib}
  Strongly Segal implies Segal-fibrant.\qed
\end{lem}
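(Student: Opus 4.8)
The plan is to prove both halves of the lemma --- the internal category $\K$ and the \sSet-inverse category \I --- by a single induction on the number of factors, since after replacing $\K_1\to\K_0$ by the span legs $\I(x_i,x_{i+1})\to\I(x_i)$ the two arguments coincide. The only inputs I need are that \sSet (with its Kan fibrations) is right proper; that fibrations are closed under composition and pullback; the standard consequence of right properness that the pullback of a fibration along an arbitrary map is a homotopy pullback square; and the fact that the homotopy limit of a fence diagram is computed as the corresponding iterated homotopy pullback.

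For $\K$, write $P_n=\K_1\times_{\K_0}\cdots\times_{\K_0}\K_1$ and induct on $n$, proving simultaneously that $P_n$ is fibrant, that it is the homotopy limit of the fence $\K_1\to\K_0\ot\cdots\ot\K_1$ defining it, and that its two boundary maps to $\K_0$ (the source of the first factor and the target of the last) are fibrations. The case $n=1$ is immediate: $P_1=\K_1$ is fibrant because $\K_1\to\K_0\to 1$ is a composite of a fibration with the structure map of the fibrant object $\K_0$, the two boundary maps are the source and target maps, and there is no pullback to check. For $n\ge 2$, present $P_n$ as the single pullback $\K_1\times_{\K_0}P_{n-1}$, gluing the target map of the new copy of $\K_1$ to the relevant boundary map of $P_{n-1}$. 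That boundary map is a fibration by the inductive hypothesis and the new $\K_1\to\K_0$ is a fibration by hypothesis, so both legs of the square are fibrations. Hence $P_n\to P_{n-1}$ is a fibration and $P_n$ is fibrant; by right properness the square is a homotopy pullback, so combined with the inductive hypothesis that $P_{n-1}$ is the homotopy limit of the shorter fence, the decomposition of a fence's homotopy limit as an iterated homotopy pullback exhibits $P_n$ as the homotopy limit of the full fence. Finally the two boundary maps of $P_n$ are the still-unused source/target maps of the outer factors, each precomposed with a projection that is a pullback of a fibration, hence again fibrations; this completes the induction. The inverse-category statement is verbatim the same, gluing over the various $\I(x_i)$ along the span legs $\I(x_{i-1},x_i)\to\I(x_i)$ and $\I(x_i,x_{i+1})\to\I(x_i)$.

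I expect the main subtlety to be purely the bookkeeping: to run the induction one must route each gluing map $P_{n-1}\to\K_0$ through a projection that is itself a pullback of a fibration, so as to recognize it as a fibration, and this is exactly where both the source and the target maps get used --- which is why the symmetric ``strongly Segal'' hypothesis is a clean sufficient condition. It is also worth noting why one cannot simply invoke ``a Reedy fibrant inverse diagram has limit equal to its homotopy limit'': in the evident inverse structure on the fence, the matching map at an interior peak is $(s,t):\K_1\to\K_0\times\K_0$, which need not be a fibration even when $s$ and $t$ are separately, so the whole fence is not Reedy fibrant. It is the one-factor-at-a-time pullback presentation, rather than Reedy fibrancy of the entire fence, that makes the argument go through.
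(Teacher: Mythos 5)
Your argument is correct, but there is nothing in the paper to compare it against line by line: the paper offers no proof at all. The lemma is stated with an immediate end-of-proof marker and a citation to Horel's Proposition 5.19, which covers the internal-category half; the \sSet-inverse-category half is left implicit (it follows by the identical argument applied to the spans $\I(x)\ot\I(x,y)\to\I(y)$, or by combining with \cref{thm:SIfib}). What you have supplied is a self-contained, elementary replacement for that citation, and it holds up: the induction on the number of factors, presenting $P_n=\K_1\times_{\K_0}P_{n-1}$, uses only closure of fibrations under composition and pullback, the fact that a pullback square with one leg a fibration (between fibrant objects, or invoking right properness of \sSet) is a homotopy pullback, and the decomposition of the fence's homotopy limit as an iterated homotopy pullback. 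Your bookkeeping is the right bookkeeping: each gluing map $P_{n-1}\to\K_0$ is recognized as a fibration by routing it through a projection that is itself a pullback of a fibration, and this is precisely where the hypothesis that \emph{both} source and target are fibrations enters. Your closing remark is also a genuine observation rather than a throwaway: the fence is generally not Reedy fibrant, since the matching map at an interior vertex is $(s,t):\K_1\to\K_0\times\K_0$, which need not be a fibration even when $s$ and $t$ are; so the tempting shortcut ``Reedy fibrant inverse diagram, hence limit computes homotopy limit'' is unavailable, and the one-factor-at-a-time presentation is what carries the proof. In short, the paper buys brevity by outsourcing to Horel; your route buys a proof that is local to the definitions in this paper and simultaneously handles both the internal-category and inverse-category statements.
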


\begin{lem}\label{thm:SIfib}
  \I is strongly Segal or Segal-fibrant if and only if $\Sigma\I$ is.
\end{lem}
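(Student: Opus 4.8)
The plan is to exploit the fact that both $\Sigma\I_0$ and $\Sigma\I_1$ are coproducts, so that all the conditions in \cref{defn:ssegal} can be checked one summand at a time. I will use two standard facts about $\sSet$ (with its Kan--Quillen structure, under which $\C=\sSet$ is the type-theoretic model category of \cref{sec:model}): (a) a coproduct $\coprod_i X_i$ is fibrant iff each $X_i$ is, and a map $\coprod_i A_i \to \coprod_j B_j$ that carries each summand into a single summand is a fibration iff each component $A_i\to B_{\sigma(i)}$ is, both because every horn and every simplex is connected and so every lifting problem factors through a single summand; and (b) coproducts in $\sSet$ are disjoint and pullback-stable (extensive) and commute with homotopy pullbacks (descent in spaces), so the strict pullback of a coproduct decomposes as a coproduct of pullbacks and is fibrant, resp.\ a homotopy pullback, iff each summand is.

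For the strongly Segal equivalence: first, $\Sigma\I_0=\coprod_x\I(x)$ is fibrant iff each $\I(x)$ is, by (a). Next, the source and target maps carry each summand of $\Sigma\I_1=\coprod_{y\prec x}\I(x,y)\sqcup\coprod_x\I(x)$ into a single summand of $\Sigma\I_0$: the summand $\I(x,y)$ is sent to $\I(x)$ by one of $s,t$ and to $\I(y)$ by the other, and the identity summand $\I(x)$ is sent to $\I(x)$ by the identity. By (a), $s$ and $t$ are fibrations iff every component is; the identity components are isomorphisms, hence automatically fibrations, so the surviving conditions are exactly that each $\I(x,y)\to\I(x)$ and each $\I(x,y)\to\I(y)$ be a fibration. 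This is precisely the strongly Segal condition for $\I$.

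For the Segal-fibrant equivalence: $\Sigma\I_0$ is fibrant iff each $\I(x)$ is, as above. For the higher pullbacks I compute the iterated fiber product $\overbrace{\Sigma\I_1\times_{\Sigma\I_0}\cdots\times_{\Sigma\I_0}\Sigma\I_1}^{n}$ by extensivity: since $s,t$ respect the coproduct decompositions, this strict pullback splits as a coproduct indexed by length-$n$ strings of composable summands, in exactly the manner already recorded for the composition map in \cref{defn:sigma-i}. A string contributes either a genuine composable chain $\I(x_{n-1},x_n)\times_{\I(x_{n-1})}\cdots\times_{\I(x_1)}\I(x_0,x_1)$, which is precisely the pullback \eqref{eq:sfibinv}, or a degenerate string containing an identity summand $\I(x)$; using $\I(x)\times_{\I(x)}(-)\cong(-)$, each degenerate string is canonically isomorphic to a shorter genuine chain, i.e.\ to an $m$-fold pullback with $m<n$. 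By (a) and (b), the $n$-fold pullback for $\Sigma\I$ is fibrant and a homotopy pullback iff each summand is. The non-degenerate summands range over all the pullbacks \eqref{eq:sfibinv}, while the degenerate ones collapse to shorter chains already covered at smaller $n$ (with base cases $n=0,1$ giving the fibrancy of $\I(x)$ and of $\I(x,y)$). Thus, ranging over all $n$, the totality of summand conditions is exactly the Segal-fibrant condition for $\I$.

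The main obstacle is the bookkeeping in the Segal-fibrant case: verifying that the extensive decomposition of the iterated pullback matches the composition-map decomposition of \cref{defn:sigma-i}, and confirming that the degenerate summands collapse (via the identity summands) to genuine shorter chains, so that they impose no conditions beyond those already present in the Segal-fibrant definition of $\I$. Once this decomposition is in hand, facts (a) and (b) make the summandwise translation routine in both directions.
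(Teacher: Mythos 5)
Your proof is correct and takes essentially the same approach as the paper: the same coproduct-versus-fibration facts handle the strongly Segal case, and the same extensive decomposition of the iterated pullback---with degenerate strings containing identity summands collapsing to shorter chains of the form~\eqref{eq:sfibinv}---handles the Segal-fibrant case. If anything, your bookkeeping of the degenerate summands is slightly more explicit than the paper's, which simply notes that the omitted summands are ``partially trivial pullbacks'' of the same form for smaller $n$.
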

\begin{proof}
  Note that given a family of maps $\{X_i \to Y\}_{i}$ in \sSet, the induced map $\coprod_i X_i \to Y$ is a fibration just when every $X_i \to Y$ is a fibration.
  Similarly, given $\{X_i \to Y_i\}_{i}$, the induced map $\coprod_i X_i \to \coprod_i Y_i$ is a fibration just when every $X_i\to Y_i$ is.
  Moreover, the injections of a coproduct are fibrations.
  It now follows easily that \I is strongly Segal if and only if $\Sigma\I$ is.
  For Segal-fibrancy, as in \cref{thm:SI} we have
  \begin{equation*}
    \overbrace{\K_1 \times_{\K_0} \cdots \times_{\K_0} \K_1}^{n}
    \cong
    \left(\coprod_{x_n \prec \cdots \prec x_0} {\I(x_{n-1},x_n) \times_{\I(x_{n-1})} \cdots \times_{\I(x_1)} \I(x_0,x_1)}\right)
    \sqcup \cdots 
  \end{equation*}
  The omitted summands on the right involve some duplicated objects and some partially trivial pullbacks.
  Thus, all summands are of the form~\eqref{eq:sfibinv} for some possibly smaller $n$, and hence are fibrant and homotopy pullbacks.
  Now the same arguments apply.
\end{proof}

\begin{lem}\label{thm:ssegal}
  If \I is fibrant as in \cref{defn:invcat-fibrant}, it is strongly Segal, hence so is $\Sigma\I$.
\end{lem}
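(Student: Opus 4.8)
The plan is to reduce everything to the definition of fibrancy together with a single application of \cref{thm:matching}, after which the statement about $\Sigma\I$ is immediate from \cref{thm:SIfib}. So I only need to verify the two clauses defining strong Segal-ness of \I. The first, that each $\I(x)$ is fibrant, is literally part of \cref{defn:invcat-fibrant}, so nothing is required there. The real content is the second clause: for every $y\prec x$, both legs $\I(x,y)\to\I(x)$ and $\I(x,y)\to\I(y)$ of the span must be fibrations.

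The idea is to factor each leg through the matching object of $\I(x,-)$. Fix $y\prec x$. Since \I is fibrant, $\I(x,-)\in(\lC^{\strsl x\I})^{\I(x)}$ is Reedy fibrant, so by \cref{defn:reedy-fibration} the map $\I(x,y)\to M_y(\I(x,-))$ is a fibration, the matching object being computed inside the down-closed subcategory $\strsl x\I$. That subcategory is itself fibrant (down-closed full subcategories of fibrant inverse categories are fibrant), and because $\C=\sSet$ every poset is pre-admissible (as in \cref{sec:model}), so \cref{thm:matching}, applied to the Reedy fibrant object $\I(x,-)$, yields that $M_y(\I(x,-))\to\I(y)\times\I(x)$ is again a fibration. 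Composing the two, $\I(x,y)\to\I(y)\times\I(x)$ is a fibration.

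Both legs now follow by projecting. Fibrancy of $\I(y)$ makes the projection $\I(y)\times\I(x)\to\I(x)$ a fibration (it is a pullback of $\I(y)\to 1$), so the source map $\I(x,y)\to\I(x)$ is a composite of fibrations; symmetrically, fibrancy of $\I(x)$ handles the target map $\I(x,y)\to\I(y)$. Hence \I is strongly Segal, and \cref{thm:SIfib} transports this to $\Sigma\I$. The one point requiring care — and essentially the only place the argument could go wrong — is the bookkeeping of base objects: one must observe that $M_y(\I(x,-))=\lC^{\strsl y\I}(\I(y,-),\I(x,-))$ lives over the \emph{product} $\I(y)\times\I(x)$, not over $\I(x)$ alone, since it is precisely this that lets \cref{thm:matching} deliver both legs of the span at once. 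This in turn rests on the identifications $\strsl y{(\strsl x\I)}=\strsl y\I$ and $(\strsl x\I)(y,-)=\I(y,-)$, both of which are the transitivity and fullness facts already recorded in the excerpt.
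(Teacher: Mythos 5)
Your proof is correct and is essentially the paper's own argument: the paper likewise factors $\I(x,y)\to\I(x)\times\I(y)$ as the composite of the Reedy fibration $\I(x,y)\to M_y\I(x,\blank)$ (from fibrancy of \I) with the fibration $M_y\I(x,\blank)\to\I(x)\times\I(y)$ supplied by \cref{thm:matching}, then gets both legs of the span by composing with the projections, and invokes \cref{thm:SIfib} for $\Sigma\I$. Your write-up merely makes explicit the bookkeeping (pre-admissibility of $\I_0\op$ for $\sSet$, fibrancy of the down-closed subcategory $\strsl x\I$, and the projection step) that the paper's one-sentence proof leaves implicit.
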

\begin{proof}
  Each $\I(x)$ is fibrant by definition, while
  $\I(x,y) \to \I(x)\times \I(y)$ is the composite of two fibrations $\I(x,y) \to M_y \I(x,\blank) \to \I(x)\times \I(y)$, the first since \I is fibrant and the second by \cref{thm:matching}.
\end{proof}


The pullbacks in the definition of Segal-fibrancy are precisely those occurring in the bisimplicial nerve.
Recall also that Rezk~\cite{rezk:css} defined a bisimplicial set $X$ to be a \emph{Segal space} if it is Reedy fibrant and the induced maps
\begin{equation}
  X_n \to \overbrace{X_1 \times_{X_0} \cdots \times_{X_0} X_1}^n\label{eq:segalspace}
\end{equation}
are all weak equivalences.
These are the fibrant objects in a model structure intermediate between the Reedy/injective one and the complete-Segal-space one.
In the analogous model structure built from the projective one, the fibrant objects are the projective-fibrant ones such that the induced maps to the wide \emph{homotopy} pullback
\begin{equation}
  X_n \to \overbrace{X_1 \times_{X_0}^h \cdots \times_{X_0}^h X_1}^n\label{eq:segalfib}
\end{equation}
are weak equivalences.
Thus, an internal category is Segal-fibrant just when its bisimplicial nerve is fibrant in this projective Segal-space model structure.

\begin{lem}\label{thm:rfrnsfic}
  If \K\ is a Segal-fibrant internal category, then a \emph{Reedy} fibrant replacement of its bisimplicial nerve is a Segal space in the sense of Rezk.
\end{lem}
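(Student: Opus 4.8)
The plan is to take a Reedy fibrant replacement and verify the two defining conditions of a Segal space directly, the main work being to bridge the homotopy-pullback formulation of Segal-fibrancy with the strict-pullback formulation of Rezk's Segal condition.

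First I record what Segal-fibrancy buys us at the level of the nerve. Since \K is Segal-fibrant, the discussion preceding the lemma shows that $N\K$ is fibrant in the projective Segal-space model structure. Concretely, the Segal maps of a nerve are isomorphisms,
\[ (N\K)_n \too (N\K)_1 \times_{(N\K)_0} \cdots \times_{(N\K)_0} (N\K)_1, \]
and Segal-fibrancy of \K says exactly that each strict pullback on the right is also a homotopy pullback, i.e.\ that the comparison map into $(N\K)_1 \times_{(N\K)_0}^h \cdots \times_{(N\K)_0}^h (N\K)_1$ is a weak equivalence. Now choose a Reedy fibrant replacement $N\K \to R$ in \ssSet; this is a levelwise weak equivalence with $R$ Reedy fibrant, so $R$ already satisfies the Reedy-fibrancy half of the definition of a Segal space, and it remains only to verify that the Segal maps~\eqref{eq:segalspace} of $R$ are weak equivalences.

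Because $R$ is Reedy fibrant, the source and target maps $R_1 \to R_0$ are fibrations, so the iterated strict pullback $R_1 \times_{R_0} \cdots \times_{R_0} R_1$ computes the homotopy pullback. Homotopy pullbacks are invariant under levelwise weak equivalences, so the levelwise equivalence $N\K \to R$ induces a weak equivalence
\[ (N\K)_1 \times_{(N\K)_0}^h \cdots \times_{(N\K)_0}^h (N\K)_1 \too R_1 \times_{R_0} \cdots \times_{R_0} R_1. \]
Composing this with the nerve's Segal isomorphism and the strict-to-homotopy comparison from the previous paragraph, the composite $(N\K)_n \to R_1 \times_{R_0} \cdots \times_{R_0} R_1$ is a weak equivalence. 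By naturality of the Segal maps along $N\K \to R$, this composite factors as
\[ (N\K)_n \too R_n \too R_1 \times_{R_0} \cdots \times_{R_0} R_1, \]
in which the first map is a levelwise weak equivalence; so by two-out-of-three the Segal map of $R$ is a weak equivalence, and $R$ is a Segal space.

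The one place to be careful---and the crux of the argument---is the mismatch between the two Segal conditions: projective Segal-fibrancy of $N\K$ is stated with \emph{homotopy} pullbacks, while Rezk's condition on $R$ uses \emph{strict} pullbacks. The bridge is precisely that Reedy fibrancy of $R$ turns its strict Segal pullbacks into homotopy pullbacks, together with homotopy-invariance of homotopy pullbacks along $N\K \to R$; granting these, the rest is a routine two-out-of-three. Equivalently, one could package this as the statement that the identity is a Quillen equivalence from the projective to the injective Segal-space model structure, whence a Reedy fibrant replacement of the projective-Segal-fibrant object $N\K$ is injective-Segal-fibrant, i.e.\ a Segal space---but the explicit homotopy-pullback bookkeeping above seems the most transparent route.
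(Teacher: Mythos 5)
Your proof is correct and takes essentially the same route as the paper: the paper's (much terser) proof likewise observes that the homotopy-pullback Segal condition~\eqref{eq:segalfib} is invariant under levelwise weak equivalence, and that for a Reedy fibrant bisimplicial set the strict Segal pullbacks~\eqref{eq:segalspace} are homotopy pullbacks, so the two conditions coincide on the replacement. Your version merely makes the comparison maps and the two-out-of-three bookkeeping explicit.
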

\begin{proof}
  The property that the maps~\eqref{eq:segalfib} are weak equivalences is invariant under levelwise equivalence, and for Reedy fibrant bisimplicial sets it is equivalent to~\eqref{eq:segalspace} being weak equivalences, since then the actual pullbacks are homotopy pullbacks.
\end{proof}

In particular, if \I is a Segal-fibrant \sSet-inverse category, then a Reedy fibrant replacement of $N\Sigma\I$ is a Segal space.
In fact, more is true:

\begin{lem}\label{thm:reedy=rezk}
  For any Segal-fibrant \sSet-inverse category \I, a Reedy fibrant replacement of $N\Sigma\I$ is Rezk fibrant (i.e.\ a complete Segal space).
\end{lem}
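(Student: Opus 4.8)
The plan is to build on \cref{thm:rfrnsfic}, which already gives that a Reedy fibrant replacement $\hat X$ of $N\Sigma\I$ is a Segal space; what remains is Rezk's \emph{completeness} condition. I would use the standard formulation from~\cite{rezk:css}: writing $\hat X_{\mathrm{heq}}\subseteq\hat X_1$ for the union of those path components consisting of homotopy equivalences, $\hat X$ is complete exactly when the degeneracy $s_0\colon\hat X_0\to\hat X_{\mathrm{heq}}$ is a weak equivalence. Both $\hat X_{\mathrm{heq}}$ (a union of components) and completeness are invariant under levelwise weak equivalence, so I may compute with $N\Sigma\I$ itself, where $(N\Sigma\I)_0=\Sigma\I_0=\coprod_x\I(x)$ and $(N\Sigma\I)_1=\Sigma\I_1=\coprod_{y\prec x}\I(x,y)\sqcup\coprod_x\I(x)$. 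By \cref{defn:sigma-i} the identity-assigning map is the inclusion of the second summand, so that summand is precisely the image of $s_0$: it consists of degenerate $1$-simplices and hence lies entirely in $\hat X_{\mathrm{heq}}$. The whole force of the lemma is therefore to show that the \emph{first} summand contributes nothing to $\hat X_{\mathrm{heq}}$.

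For this I would introduce the \emph{level} of an object: each object of $\Sigma\I$ lies in a unique summand $\I(x)$ of $\Sigma\I_0$, giving it a level $x\in\I_0$. Reading off \cref{defn:sigma-i}, a morphism from the first summand $\I(x,y)$ strictly lowers level (from $x$ to $y\prec x$) while a morphism from the identity summand preserves it; since every composite of morphisms can only preserve or lower level, any homotopy equivalence $f\colon a\to b$ with homotopy inverse $g\colon b\to a$ satisfies $\mathrm{level}(a)\preceq\mathrm{level}(b)\preceq\mathrm{level}(a)$. Transitivity together with the irreflexivity implied by well-foundedness (\cref{defn:cinversecat}) forces $\mathrm{level}(a)=\mathrm{level}(b)$, so $f$ is level-preserving and thus lies in the identity summand. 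This is the single step where the inverse EI hypothesis is used: it is exactly the antisymmetry of $\preceq$ that forbids a genuinely noninvertible map from being a homotopy equivalence.

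Combining the two inclusions gives $\hat X_{\mathrm{heq}}\simeq\coprod_x\I(x)\simeq\hat X_0$, with $s_0$ inducing this very equivalence, so $\hat X$ is complete. I expect the main obstacle to be purely organizational: justifying that homotopy-invertibility of a $1$-simplex depends only on its component (so that $\hat X_{\mathrm{heq}}$ may be read off from the decomposition of $\Sigma\I_1$), and phrasing the ``level'' argument at the level of the homotopy category inside the bisimplicial nerve rather than in a strict $1$-category. Once the level function is available the well-foundedness step is immediate; the care lies in transporting these component-level statements across the Reedy fibrant replacement $\hat X\simeq N\Sigma\I$.
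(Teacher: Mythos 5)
Correct, and essentially the paper's own argument: the paper likewise reduces to completeness (Segality coming from \cref{thm:rfrnsfic}), observes that a $1$-simplex joining distinct levels cannot be a homotopy equivalence because an inverse would have to go up the well-founded order (impossible by the antisymmetry forced by well-foundedness), and concludes by identifying the remaining components with the image of the degeneracy, which is a weak equivalence onto them. The only difference is bookkeeping: the paper normalizes the replacement so that $(RN\Sigma\I)_0 = (N\Sigma\I)_0$ (possible since $(N\Sigma\I)_0$ is fibrant) and then uses extensivity to write $(RN\Sigma\I)_1 = \coprod_{x,y} R\I(x,y)$ with $R\I(x,y)=\emptyset$ unless $y\preceq x$, which makes your $\pi_0$-level argument literal and sidesteps the transport-across-the-replacement issue you flag at the end.
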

\begin{proof}
  Let $RN\Sigma\I$ be a Reedy fibrant replacement; it remains to prove completeness.
  Since $(N\Sigma\I)_0$ is fibrant, we may assume $(RN\Sigma\I)_0 = (N\Sigma\I)_0$, so $(RN\Sigma\I)_0 = \coprod_{x\in\I_0} \I(x)$.
  Thus, $(RN\Sigma\I)_1$ is a coproduct $\coprod_{x,y\in \I_0} R\I(x,y)$ for some $R\I(x,y)\in\sSet$.
  Since $N\Sigma\I \to RN\Sigma\I$ is a levelwise equivalence, we have $R\I(x,y) = \emptyset$ unless $y\preceq x$.
  Thus, if $y\prec x$, no element of $R\I(x,y)$ can be an equivalence, since there would be nothing to be its inverse.
  So the subspace of components of equivalences in $(RN\Sigma\I)_1$ is contained in $\coprod_{x\in\I_0} R\I(x,x)$.

  Now, because $N\Sigma\I \to RN\Sigma\I$ is a levelwise equivalence, its action on 1-simplices is an equivalence.
  But $(N\Sigma\I)_1 = \coprod_{y\prec x} \I(x,y) \sqcup \coprod_{x} \I(x)$, and the map $(N\Sigma\I)_1 \to (RN\Sigma\I)_1$ sends $\I(x,y)$ into $R\I(x,y)$ and $\I(x)$ into $R\I(x,x)$; thus the induced map $\coprod_{x} \I(x) \to \coprod_{x} R\I(x,x)$ is an equivalence.
  But this is the degeneracy map of $RN\Sigma\I$, so every point in $\coprod_{x\in\I_0} R\I(x,x)$ is an equivalence and $RN\Sigma\I$ is Rezk-complete.
\end{proof}

Thus, any \sSet-inverse category \I gives rise to a complete Segal space $R N \Sigma\I$. 

\begin{thm}
  A fibrant \sSet-inverse category presents an inverse EI \io-category.
\end{thm}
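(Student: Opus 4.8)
The plan is to build on the complete Segal space $RN\Sigma\I$ supplied by \cref{thm:reedy=rezk}: a fibrant \I is strongly Segal by \cref{thm:ssegal}, hence Segal-fibrant by \cref{thm:sseg-sfib}, so $RN\Sigma\I$ is Rezk fibrant and presents some \io-category. It remains only to verify the two defining properties of an inverse EI \io-category, namely that every endomorphism is an equivalence and that the induced order on equivalence classes of objects is well-founded, and I expect to read both off the coproduct decompositions already analyzed inside the proof of \cref{thm:reedy=rezk}. The relevant book-keeping is: by \cref{defn:sigma-i} we have $(RN\Sigma\I)_0 = \coprod_{x\in\I_0}\I(x)$ and $(RN\Sigma\I)_1 = \coprod_{y\preceq x} R\I(x,y)$, with $R\I(x,y)=\emptyset$ unless $y\preceq x$, and with source and target restricting on the summand $R\I(x,y)$ to maps into $\I(x)$ and $\I(y)$ respectively (inherited from the span $\I(x)\ot\I(x,y)\to\I(y)$). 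The decisive input, established within \cref{thm:reedy=rezk}, is that the equivalences in $(RN\Sigma\I)_1$ are exactly the subspace $\coprod_{x\in\I_0}R\I(x,x)$: they are contained in it since a point of $R\I(x,y)$ with $y\prec x$ admits no inverse, and they exhaust it since the degeneracy identifies $\coprod_x R\I(x,x)$ with equivalences. Consequently each object $a$ determines a well-defined element $x_a\in\I_0$ (its summand), and equivalent objects determine the same one, since any equivalence has both endpoints in a single $\I(x)$.

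For the EI property, let $a$ be an object lying in $\I(x_0)$ and consider its space of endomorphisms, computed as the homotopy fiber of $(s,t):(RN\Sigma\I)_1\to (RN\Sigma\I)_0\times(RN\Sigma\I)_0$ over $(a,a)$. Because $(s,t)$ carries the summand $R\I(x,y)$ into $\I(x)\times\I(y)$ while $(a,a)$ lies in $\I(x_0)\times\I(x_0)$, every summand with $(x,y)\neq(x_0,x_0)$ contributes an empty fiber; thus the endomorphism space is computed entirely within $R\I(x_0,x_0)$, all of whose points are equivalences. Hence every endomorphism is an equivalence.

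For the inverse property, I would check that the assignment $[a]\mapsto x_a$ carries the order $\prec$ on equivalence classes of objects (where $[a]\prec[b]$ means there is a noninvertible map $b\to a$) strictly into the relation $\prec$ on $\I_0$. Indeed, a noninvertible map $b\to a$ is a point of some $R\I(x,y)$ with source $b\in\I(x)$ and target $a\in\I(y)$, so $x=x_b$ and $y=x_a$ with $y\preceq x$; noninvertibility rules out $x=y$, since $R\I(x,x)$ consists of equivalences, whence $x_a\prec x_b$. As $\prec$ on $\I_0$ is well-founded by definition (\cref{defn:cinversecat}), any infinite descending $\prec$-chain of equivalence classes would project to an infinite descending $\prec$-chain in $\I_0$, which is impossible; so $\prec$ on equivalence classes is well-founded and the presented \io-category is inverse EI.

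The main obstacle is conceptual rather than computational: one must pass carefully between the strict coproduct description of $RN\Sigma\I$ and the homotopy-invariant notions of endomorphism, equivalence, and equivalence class in the \io-category it presents. Once the identification of the equivalences with $\coprod_x R\I(x,x)$ from \cref{thm:reedy=rezk} is granted, both properties reduce to the disjointness of the summands indexed by distinct pairs $(x,y)$, together with well-foundedness of $\prec$ on $\I_0$.
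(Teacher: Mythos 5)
Your proposal is correct and follows essentially the same route as the paper: both arguments rest on the identification, inside the proof of \cref{thm:reedy=rezk}, of the equivalences of $RN\Sigma\I$ with the summand $\coprod_{x}R\I(x,x)$, from which EI follows because endomorphisms of an object of $\I(x_0)$ can only live in $R\I(x_0,x_0)$, and well-foundedness follows by projecting the order on equivalence classes strictly down to $(\I_0,\prec)$. The paper's version is just terser, leaving implicit the book-keeping about homotopy fibers of $(s,t)$ and the strict order-preservation of $[a]\mapsto x_a$ that you spell out.
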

\begin{proof}
  Let \I be a fibrant \sSet-inverse category; by \cref{thm:ssegal,thm:sseg-sfib} it is Segal-fibrant.
  By the proof of \cref{thm:reedy=rezk}, the degeneracy map of $R N \Sigma\I$ is an equivalence onto a subspace that includes all endomorphisms.
  Thus it is EI.
  Since it is Rezk-complete, its set of equivalence classes of objects is the set of connected components of $(R N \Sigma\I)_0$, which is just $\coprod_{x\in\I_0} \I(x)$, and the resulting relation $\prec$ agrees with that induced by the $\prec$ of \I; thus it is well-founded.
\end{proof}

It remains to show that any inverse EI \io-category can be presented by a fibrant \sSet-inverse category.

By~\cite[Proposition 5.13]{horel:model-intsscat}, the fibrant objects of the Horel model structure are created by the bisimplicial nerve to the projective Rezk model structure.
We call them \textbf{Rezk-fibrant}; they are Segal-fibrant and their identity-assigning map $\K_0 \to \K_1$ is an equivalence onto the components of equivalences. 
In particular, $\K_0$ has the homotopy type of the maximal sub-$\infty$-groupoid of \K.
If we write $\K_0$ as a coproduct of connected spaces $\K_0 = \coprod_{x\in\pi_0(\K_0)} \K(x)$, then similarly $\K_1 = \coprod_{x,y\in\pi_0(\K_0)} \K(x,y)$.
Rezk-completeness implies that if $x\neq y$ then no morphism in $\K(x,y)$ can be an equivalence, and each $\K(x) \to \K(x,x)$ is an equivalence onto the components of equivalences.
If \K is an EI \io-category, then every component of $\K(x,x)$ represents an endomorphism and hence an equivalence, so each map $\K(x) \to \K(x,x)$ is an equivalence.
Furthermore, we have $y\prec x$ for $x\neq y$ if and only if $\K(x,y)\neq\emptyset$.

\begin{thm}\label{thm:ei-inverse}
  Any inverse EI \io-category can be presented by one of the form $\Sigma\I$, where $\I$ is a Segal-fibrant \sSet-inverse category.
\end{thm}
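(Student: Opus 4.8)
The plan is to realize the given inverse EI \io-category by a Rezk-fibrant internal category $\K$ in \sSet, extract a Segal-fibrant \sSet-inverse category $\I$ from the component decomposition of $\K$, and exhibit a levelwise equivalence $\Sigma\I\to\K$ of bisimplicial nerves. First I would present the given \io-category by a Rezk-fibrant $\K$: since the bisimplicial nerve $N$ is a right Quillen equivalence from the Horel model structure to the projective Rezk model structure, every \io-category is presented by a fibrant object of the Horel structure. By the discussion preceding the statement, such a $\K$ is Segal-fibrant, admits a decomposition $\K_0 = \coprod_{x\in\pi_0\K_0}\K(x)$ into connected components and correspondingly $\K_1 = \coprod_{x,y}\K(x,y)$, with $\K(x,y)=\emptyset$ unless $y\preceq x$, with each $\K(x)\to\K(x,x)$ an equivalence (this is where the EI hypothesis enters), and with $y\prec x$ for $x\neq y$ if and only if $\K(x,y)\neq\emptyset$.

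Next I would define $\I$ by $\I_0 = \pi_0\K_0$ with the relation $\prec$ above, $\I(x)=\K(x)$, and $\I(x,y)=\K(x,y)$ for $y\prec x$ with source, target, and composition inherited from $\K$. The relation $\prec$ is well-founded because the \io-category is inverse, and transitive because composing the witnessing morphisms of $\K(x,y)$ and $\K(y,z)$ lands in $\K(x,z)$, which is therefore nonempty, with $z\neq x$ forced by well-foundedness. Associativity and the span data of \cref{defn:cinversecat} are inherited from $\K$, and the prefibration condition is automatic since every map of \sSet is a prefibration. To see $\I$ is Segal-fibrant as in \cref{defn:ssegal}, note each $\I(x)=\K(x)$ is fibrant as a component of the fibrant $\K_0$, and each wide pullback $\I(x_{n-1},x_n)\times_{\I(x_{n-1})}\cdots\times_{\I(x_1)}\I(x_0,x_1)$ is a coproduct summand of the corresponding Segal-fibrancy pullback of $\K$ (as in the proof of \cref{thm:SIfib}); summands of fibrant homotopy pullbacks are again fibrant homotopy pullbacks, so $\I$ is Segal-fibrant, and hence so is $\Sigma\I$ by \cref{thm:SIfib}.

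Finally I would compare $\Sigma\I$ with $\K$. By \cref{defn:sigma-i} we have $\Sigma\I_0 = \coprod_x\I(x)=\K_0$ and $\Sigma\I_1 = \coprod_{y\prec x}\I(x,y)\sqcup\coprod_x\I(x)$, which differs from $\K_1=\coprod_{y\prec x}\K(x,y)\sqcup\coprod_x\K(x,x)$ only in the diagonal summand. There is an internal functor $\Sigma\I\to\K$ that is the identity on objects, the identity on the off-diagonal morphism summands, and the degeneracy $\K(x)\to\K(x,x)$ on the diagonal; one checks it respects source, target, identities, and composition (on composable pairs of $\Sigma\I$ the diagonal entries are identities, so only the unit laws of $\K$ are needed). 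Since each $\K(x)\to\K(x,x)$ is an equivalence, this functor is an equivalence on $\K_0$ and $\K_1$; because both $\Sigma\I$ and $\K$ are Segal-fibrant, the defining pullbacks of their nerves are homotopy pullbacks, so $N\Sigma\I\to N\K$ is an equivalence in every simplicial degree. A levelwise equivalence is a weak equivalence in the projective Rezk model structure — equivalently, by \cref{thm:reedy=rezk}, it induces an equivalence of the complete Segal spaces obtained by Reedy fibrant replacement — so $\Sigma\I$ presents the same \io-category as $\K$.

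The hard part will be verifying that $\Sigma\I\to\K$ is a levelwise equivalence of nerves, i.e.\ that replacing the endomorphism spaces $\K(x,x)$ by the spaces of objects $\K(x)$ loses no homotopical information. This rests entirely on the EI hypothesis, through the equivalence $\K(x)\to\K(x,x)$, together with Segal-fibrancy, which ensures the equivalence propagates from levels $0$ and $1$ to all simplicial degrees via homotopy pullbacks.
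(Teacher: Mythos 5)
Your proposal is correct and follows essentially the same route as the paper: present the given \io-category by a Rezk-fibrant internal category \K, extract \I from the component decomposition $\K_0=\coprod_x\K(x)$, $\K_1=\coprod_{y\preceq x}\K(x,y)$, and show that the comparison functor $\Sigma\I\to\K$ induces a levelwise equivalence of bisimplicial nerves using the EI equivalences $\K(x)\to\K(x,x)$ and Segal-fibrancy. The only (inessential) difference is at simplicial degrees $n\ge 2$, where you invoke homotopy-invariance of the wide pullbacks globally, while the paper first decomposes them into coproduct summands over tuples of components (observing the map is an isomorphism unless some components are duplicated) and applies the same homotopy-pullback argument summand by summand.
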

\begin{proof}
  Using the Horel model structure, any small \io-category may be presented by a Rezk-fibrant internal category $\K$ in \sSet.
  When \K is EI, as we shall henceforth assume,
  the above arguments show that $\K_0 = \coprod_{x\in\I_0} \K(x)$ and $\K_1 = \coprod_{y\preceq x} \K(x,y)$, and the maps $\K(x) \to \K(x,x)$ are equivalences.

  Define a \sSet-inverse category \I with $\I_0 = \pi_0(\K_0)$ and $\prec$ that of \K, with $\I(x) = \K(x)$, $\I(x,y) = \K(x,y)$ for $y\prec x$ and composition induced from \K.
  Segal-fibrancy of $\Sigma\I$ follows from that of \K.
  We will show that the obvious functor $\Sigma\I \to \K$ is an equivalence in the projective model structure of~\cite[Theorem 5.2]{horel:model-intsscat}, hence also the Horel model structure.
  This means we must show that it induces a levelwise equivalence of bisimplicial nerves.
  It is an isomorphism on spaces of objects, while on morphisms it is a coproduct of the equalities $\I(x,y) = \K(x,y)$ when $y\prec x$ and the above equivalences $\K(x) \to \K(x,x)$.
  Thus it remains to show that the map
  \[ \overbrace{\Sigma\I_1 \times_{\Sigma\I_0} \cdots \times_{\Sigma\I_0} \Sigma\I_1}^{n} \too
  \overbrace{\K_1 \times_{\K_0} \cdots \times_{\K_0} \K_1}^{n}
  \]
  is an equivalence for all $n\ge 2$.
  Now this map lies over
  \[(\K_0)^{n+1} \cong \coprod_{x_0,\dots,x_n} \K(x_0)\times\cdots\times\K(x_n),\]
  so it will suffice to show that each induced map
  \begin{multline}
    \Sigma\I(x_0,x_1) \times_{\K(x_1)} \cdots \times_{\K(x_{n-1})} \Sigma\I(x_{n-1},x_n) \\
    \too \K(x_0,x_1) \times_{\K(x_1)} \cdots \times_{\K(x_{n-1})} \K(x_{n-1},x_n)\label{eq:SIeqn}
  \end{multline}
  is an equivalence.
  Here $\Sigma\I(x,y)$ denotes the summand of $\Sigma\I_1$ lying over $\I(x)\times\I(y)$, which is $\K(x,y)$ if $y\prec x$, is $\K(x)$ if $x=y$, and is $\emptyset$ otherwise.
  Thus, $\Sigma\I(x,y) \to \K(x,y)$ is an identity if $y\prec x$, an equivalence if $x=y$, and an identity otherwise.

  In particular,~\eqref{eq:SIeqn} is an isomorphism unless some $x_i$ are duplicated.
  If there are duplications, the domain of~\eqref{eq:SIeqn} is a wide pullback like the codomain, but for the shorter list of $x_i$'s obtained by omitting adjacent duplicates.
  Since $\K$ is Segal-fibrant, each of these wide pullbacks is a homotopy pullback.
  But homotopy pullbacks preserve equivalences, and the maps $\K(x) \to \K(x,x)$ are equivalences.
\end{proof}

It remains to replace a Segal-fibrant \sSet-inverse category by a fibrant one.

\begin{defn}
  For an internal category \K, an internal diagram $A\in(\sSet^\K)^\Gm$ is \textbf{Segal-fibrant} if each wide pullback
  \( A \times_{\K_0} \overbrace{\K_1 \times_{\K_0} \cdots \times_{\K_0} \K_1}^{n} \)
  is fibrant and is a homotopy pullback.
\end{defn}

In particular, \K is Segal-fibrant iff $\K_1$ is Segal-fibrant as an object of $(\sSet^\K)^{\K_0}$.

\begin{lem}\label{thm:bar}
  Let $f:\K\to\L$ be a functor between internal categories in \sSet, let $A \in (\sSet^\K)^\Gm$, and assume that
  $A$ is Segal-fibrant,
  \L is strongly Segal,
  $f_0:\K_0 \to \L_0$ is an isomorphism, and
  $f_1:\K_1 \to \L_1$ is a weak equivalence.
  Then there is a $B\in (\sSet^\L)^\Gm$ and a map $A\to f^*B$ in $(\sSet^\K)^\Gm$ whose underlying map in $\sSet$ is a weak equivalence.
\end{lem}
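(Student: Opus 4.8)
The plan is to realize $B$ as a two-sided bar construction, exhibiting it as a homotopy left Kan extension of $A$ along $f$, and to identify the desired map $A\to f^*B$ with the unit of the derived adjunction $f_!\dashv f^*$. The three hypotheses — $A$ Segal-fibrant, $\L$ strongly Segal, $f_1$ a weak equivalence — are then exactly what is needed to make this unit a weak equivalence. First, since $f_0$ is an isomorphism I identify $\K_0=\L_0$ and regard $\L_1$ as a right $\K$-module, with action $\L_1\times_{\L_0}\K_1\xto{1\times f_1}\L_1\times_{\L_0}\L_1\to\L_1$ given by composition in $\L$. I form the simplicial object $B_\bullet$ in $\sSet$ (all of this carried out over the parameter $\Gm$) with
\[ B_n = \L_1\times_{\L_0}\overbrace{\K_1\times_{\K_0}\cdots\times_{\K_0}\K_1}^{n}\times_{\K_0}A, \]
whose faces use composition in $\L$ and $\K$ together with the $\K$-action on $A$, and whose degeneracies insert identities. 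I set $B=\mathrm{diag}(B_\bullet)$, the diagonal of the associated bisimplicial set; left multiplication of $\L_1$ on itself makes $B$ into an object of $(\sSet^\L)^\Gm$, with anchor to $\L_0$ induced by the target map of the outer factor.

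The comparison map $\eta\colon A\to f^*B$ is the composite $A\to B_0=\L_1\times_{\L_0}A\into B$, where $A\to B_0$ inserts the identity of $\L$ via the unit $\L_0\to\L_1$ (legitimate because $\K_0=\L_0$). Using functoriality of $f$ one checks that $\eta$ is a morphism of $\K$-diagrams; it is precisely the unit of $f_!\dashv f^*$ for the internal left Kan extension. It remains to prove that the underlying map of $\eta$ in $\sSet$ is a weak equivalence, which is the heart of the matter. I compare $B_\bullet$ with the bar construction $B'_\bullet\coloneqq B_\bullet(\K_1,\K,A)$, whose $n$-th term is $\K_1\times_{\K_0}\overbrace{\K_1\times_{\K_0}\cdots\times_{\K_0}\K_1}^{n}\times_{\K_0}A$, via the levelwise map $c_\bullet\colon B'_\bullet\to B_\bullet$ that applies $f_1$ to the outer factor. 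Two facts combine:
\begin{enumerate}
\item The augmentation $|B'_\bullet|\to A$ given by the $\K$-action is a weak equivalence by the extra-degeneracy argument (the unit of $\K$ provides the contraction); its section $u\colon A\to|B'_\bullet|$ coming from the $(-1)$-degeneracy is a homotopy inverse, hence also a weak equivalence.
\item The map $c_\bullet$ is a levelwise weak equivalence. At level $n$ it only replaces the outer factor via $f_1$. By Segal-fibrancy of $A$ the fibered products $\overbrace{\K_1\times_{\K_0}\cdots\times_{\K_0}\K_1}^{n}\times_{\K_0}A$ are fibrant and are homotopy pullbacks, while strong Segal-ness of $\L$ makes the outer maps of $\L_1$ fibrations; thus both $B_n$ and $B'_n$ are genuine homotopy pullbacks of the same cospan, differing only in the outer corner by $f_1$. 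Since $f_1$ is a weak equivalence and homotopy pullbacks preserve weak equivalences, each $c_n$ is a weak equivalence, and as the diagonal takes levelwise weak equivalences of bisimplicial sets to weak equivalences, $|c_\bullet|\colon|B'_\bullet|\to B$ is a weak equivalence.
\end{enumerate}

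Finally, because $f$ preserves identities (so $f_1\circ e_\K=e_\L$), one has $\eta=|c_\bullet|\circ u$ on the nose at level $0$, hence after taking diagonals; as both $u$ and $|c_\bullet|$ are weak equivalences, so is $\eta$, completing the argument.

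The main obstacle is the homotopy-pullback bookkeeping in step~(ii): verifying that under precisely these hypotheses the strict fibered products defining $B_n$ and $B'_n$ compute the intended homotopy pullbacks, and that the map induced on them by the weak equivalence $f_1$ in a single corner is again a weak equivalence. The remaining ingredients — well-definedness, associativity, and unitality of the $\L$-action on $B$, and the verification that $\eta$ is a morphism of $\K$-diagrams — are routine but rely on the functoriality of $f$ and on carrying the parameter $\Gm$ through unchanged.
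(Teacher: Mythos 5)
Your proposal is correct and follows essentially the same route as the paper: both realize $B$ as the two-sided bar construction (the paper's $B(A,\K,\L)$, mirroring your $B_\bullet$), obtain $A\simeq B(A,\K,\K)$ by the extra-degeneracy/simplicial-homotopy-equivalence argument, and show the comparison map induced by $f_1$ is a levelwise weak equivalence because both levels are homotopy pullbacks (Segal-fibrancy of $A$ plus strong Segal-ness of $\L$), which the diagonal/realization then preserves. The paper simply compresses this by citing May's bar construction and Horel's Theorem 6.22 rather than spelling out the module structures and the unit map as you do.
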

\begin{proof}
  We mimic~\cite[Theorem 6.22]{horel:model-intsscat}.
  Since $f_0$ is an isomorphism, $f^*B$ is $B$ with a \K-action induced by $f$.
  Let $B$ be the bar construction $B(A,\K,\L)$ as in~\cite{may:csf}; then we have a simplicial homotopy equivalence $A\to B(A,\K,\K)$, so it suffices to show the map $B(A,\K,\K) \to B(A,\K,\L)$ induced by $f_1$ is a weak equivalence.

  Since geometric realization preserves weak equivalences, it suffices to show each
  \[ A \times_{\K_0} \overbrace{\K_1 \times_{\K_0} \cdots \times_{\K_0} \K_1}^{n} \times_{\K_0} \K_1 \too
  A \times_{\K_0} \overbrace{\K_1 \times_{\K_0} \cdots \times_{\K_0} \K_1}^{n} \times_{\K_0} \L_1
  \]
  is a weak equivalence.
  This is because both pullbacks are homotopy pullbacks, by Segal-fibrancy of $A$ and strong-Segality of \L, and $f_1$ is a weak equivalence.
\end{proof}


\begingroup
\let\C\sSet
\begin{defn}
  For \C-inverse categories \I and \J, an \textbf{io-functor} $\I\to\J$ is
  \begin{itemize}
  \item An injection $\I_0 \into \J_0$ that is the inclusion of an initial segment,
  \item Isomorphisms $\I(x) \cong \J(x)$ for all $x\in \I_0$, and
  \item Morphisms $\I(x,y) \to \J(x,y)$ over the isomorphism $\I(x)\times\I(y) \cong \J(x)\times \J(y)$, commuting with composition.
  \end{itemize}
  An \textbf{io-embedding} is an io-functor such that each $\I(x,y) \to \J(x,y)$ is an isomorphism.
  An \textbf{io-equivalence} is an io-functor such that $\I_0 \into \J_0$ is an isomorphism and each $\I(x,y) \to \J(x,y)$ is a weak equivalence.
\end{defn}

For example, the inclusion of any full \C-inverse subcategory is an io-embedding.
Any io-functor $f:\I\to\J$ induces an ordinary internal functor $\Sigma f:\Sigma\I\to\Sigma\J$.
\endgroup

\begin{lem}
  If $f:\I\to\J$ is an io-equivalence and $\I$ and $\J$ are Segal-fibrant, then $\Sigma f$ is a weak equivalence in the Horel model structure.
\end{lem}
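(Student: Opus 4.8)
The plan is to reduce, exactly as in \cref{thm:ei-inverse}, to showing that $N\Sigma f$ is a \emph{levelwise} weak equivalence of bisimplicial sets; this makes $\Sigma f$ an equivalence in the projective model structure of \cite[Theorem 5.2]{horel:model-intsscat}, which is created by $N$, and hence a weak equivalence in the Horel model structure, since the latter is a localization of the former. Because $f$ is an io-equivalence, $\I_0\into\J_0$ is an isomorphism of posets, so the two relations $\prec$ agree and the coproduct decompositions of $N\Sigma\I$ and $N\Sigma\J$ match up index-by-index (were some $\J(x,y)$ with $y\prec x$ to receive a map from an empty summand, it could not be a weak equivalence). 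At level $0$ the map $\coprod_{x}\I(x)\to\coprod_x\J(x)$ is a coproduct of the isomorphisms $\I(x)\cong\J(x)$, and at level $1$ the map $\Sigma\I_1\to\Sigma\J_1$ is a coproduct of the weak equivalences $\I(x,y)\to\J(x,y)$ (for $y\prec x$) together with those same isomorphisms (on the degeneracy summands), so both levels are weak equivalences.

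For general $n$ I would argue as in the proofs of \cref{thm:SIfib,thm:ei-inverse}. The level-$n$ nerve $\overbrace{\Sigma\I_1\times_{\Sigma\I_0}\cdots\times_{\Sigma\I_0}\Sigma\I_1}^{n}$ decomposes over tuples $(x_0,\dots,x_n)$ into summands
\[ \Sigma\I(x_0,x_1)\times_{\I(x_1)}\cdots\times_{\I(x_{n-1})}\Sigma\I(x_{n-1},x_n), \]
and likewise for $\J$, where $\Sigma\I(x,y)$ is $\I(x,y)$ if $y\prec x$, is $\I(x)$ if $x=y$, and is $\emptyset$ otherwise. Since a coproduct of weak equivalences is a weak equivalence, it suffices to treat one summand at a time. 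On each summand the factor maps $\Sigma\I(x_i,x_{i+1})\to\Sigma\J(x_i,x_{i+1})$ are weak equivalences — the equivalences $\I(x_i,x_{i+1})\to\J(x_i,x_{i+1})$, the isomorphisms $\I(x_i)\cong\J(x_i)$, or $\emptyset\to\emptyset$ — and the base maps $\I(x_i)\cong\J(x_i)$ are isomorphisms.

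The key input is Segal-fibrancy. Following \cref{thm:SIfib}, discarding the partially trivial pullback factors $\I(x_i)\times_{\I(x_i)}(\blank)$ coming from duplicated adjacent objects rewrites each summand as a wide pullback of the form \eqref{eq:sfibinv} for a strictly decreasing subchain of the $x_i$, which by Segal-fibrancy of $\I$ (respectively $\J$) is a homotopy pullback. Since homotopy pullbacks are homotopy-invariant and the factor and base maps above are all weak equivalences, the induced map of summands is a weak equivalence, completing the level-$n$ step. The main obstacle is precisely this bookkeeping: a raw wide pullback need not be a homotopy pullback, and one must trim the trivial factors and appeal to Segal-fibrancy on the reduced chain before homotopy-invariance can be invoked — a subtlety already isolated in \cref{thm:SIfib}, which I would reuse rather than reprove.
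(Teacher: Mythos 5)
Your proof is correct and takes essentially the same route as the paper's: reduce to showing that $N\Sigma f$ is a levelwise equivalence of bisimplicial nerves (hence a projective, hence Horel, equivalence), decompose each level as a coproduct over tuples of objects, and use Segal-fibrancy to recognize each summand as a homotopy pullback so that the maps induced by the equivalences $\I(x,y)\to\J(x,y)$ and isomorphisms $\I(x)\cong\J(x)$ are weak equivalences. The paper's proof is simply more terse, leaving implicit the trimming of trivial factors for duplicated adjacent objects that you spell out via \cref{thm:SIfib}.
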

\begin{proof}
  In fact, it is a projective equivalence, i.e.\ induces a levelwise equivalence of bisimplicial nerves.
  The induced map on $n$-simplices is a coproduct of maps
  \[ \I(x_{n-1},x_n) \times_{\I(x_{n-1})} \cdots \times_{\I(x_1)} \I(x_0,x_1) \too
  \J(x_{n-1},x_n) \times_{\J(x_{n-1})} \cdots \times_{\J(x_1)} \J(x_0,x_1),
  \]
  between homotopy pullbacks, hence preserving the equivalences $\I(x,y) \to \J(x,y)$.
\end{proof}

\begin{lem}\label{thm:ibar}
  Let $f:\I\to\J$ be an io-equivalence and let $A \in (\sSet^\I)^\Gm$, where $A$ is Segal-fibrant and \J is fibrant.
  Then there is a Reedy fibrant $B\in (\sSet^\J)^\Gm$ and a weak equivalence $A\to f^*B$ in $(\sSet^\I)^\Gm$.
\end{lem}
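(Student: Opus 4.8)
The plan is to reduce to \cref{thm:bar} by passing through the $\Sigma$ construction and then to correct the resulting diagram to a Reedy fibrant one. Recall first that the category $(\sSet^\I)^\Gm$ of $\I$-diagrams is identified with the category $(\sSet^{\Sigma\I})^\Gm$ of internal diagrams on $\Sigma\I$ (the diagram-level counterpart of the decomposition used in \cref{thm:SI}), and that under this identification reindexing $f^*$ along $f:\I\to\J$ corresponds to reindexing $(\Sigma f)^*$ along $\Sigma f:\Sigma\I\to\Sigma\J$. Moreover, since $\Sigma\I_0 = \coprod_x\I(x)$, the total space underlying an $\I$-diagram $A$ is $\coprod_x A_x$; hence a map of internal diagrams is an \emph{underlying} weak equivalence in $\sSet$ exactly when each component $A_x\to B_x$ is a weak equivalence, i.e.\ exactly when it is a \emph{levelwise} weak equivalence of $\I$-diagrams, which is a weak equivalence in $(\sSet^\I)^\Gm$ by \cref{thm:model-structure}. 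So it suffices to produce an internal diagram over $\Sigma\J$, corresponding to a Reedy fibrant $\J$-diagram, together with an underlying-$\sSet$ weak equivalence from $A$.

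First I would verify the hypotheses of \cref{thm:bar} for the internal functor $\Sigma f:\Sigma\I\to\Sigma\J$ and the internal diagram $A$. Segal-fibrancy of $A$ is assumed. Since $\J$ is fibrant, \cref{thm:ssegal} shows $\Sigma\J$ is strongly Segal. Because $f$ is an io-equivalence we have $\I_0\cong\J_0$ and $\I(x)\cong\J(x)$, so $(\Sigma f)_0:\coprod_x\I(x)\to\coprod_x\J(x)$ is an isomorphism; and since each $\I(x,y)\to\J(x,y)$ is a weak equivalence while the identity summands $\I(x)\to\J(x)$ are isomorphisms, $(\Sigma f)_1$ is a coproduct of weak equivalences, hence a weak equivalence. \cref{thm:bar} then yields an internal diagram over $\Sigma\J$, equivalently a $\J$-diagram $\tilde B\in(\sSet^\J)^\Gm$, together with a map $A\to f^*\tilde B$ whose underlying map in $\sSet$ is a weak equivalence; by the remark above this is already a weak equivalence in $(\sSet^\I)^\Gm$.

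It remains to replace $\tilde B$ by a Reedy fibrant diagram without losing the comparison with $A$. Since we work over $\C=\sSet$, where every map is a prefibration, every diagram is Reedy prefibrant, so by \cref{thm:matching} (using that $\J$ is fibrant and that $\J_0\op$ is pre-admissible for $\sSet$) all matching objects exist at every stage. I would therefore build a Reedy fibrant replacement $\tilde B\to B$ by well-founded induction on $\prec$: having constructed $B$ on $\strsl x\J$, form the composite $\tilde B_x \to M_x\tilde B \to M_x B$, whose second map is obtained by applying the matching-object hom-functor to $\tilde B|_{\strsl x\J}\to B|_{\strsl x\J}$, and factor it as an acyclic cofibration $\tilde B_x\to B_x$ followed by a fibration $B_x\to M_x B$. (Equivalently, one may take a fibrant replacement in the Reedy model structure of \cref{thm:model-structure} on $(\sSet^\J)^\Gm$, whose weak equivalences are levelwise and whose fibrant objects are the Reedy fibrant ones.) Either way $\tilde B\to B$ is a levelwise weak equivalence and $B$ is Reedy fibrant.

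Finally I would assemble the two maps. Applying $f^*$ to $\tilde B\to B$ gives $f^*\tilde B\to f^*B$ in $(\sSet^\I)^\Gm$; since $f^*$ acts on each level by pullback along the isomorphism $\I(x)\cong\J(x)$, it is a levelwise isomorphism on underlying objects and so sends the levelwise weak equivalence $\tilde B\to B$ to one. Hence the composite $A\to f^*\tilde B\to f^*B$ is a weak equivalence in $(\sSet^\I)^\Gm$ with $B$ Reedy fibrant, as desired. I expect the main obstacle to be not any single hard estimate but the careful bookkeeping of the three translations involved—between $\I$-diagrams and $\Sigma\I$-internal diagrams, between ``underlying'' and ``levelwise'' weak equivalences, and between the Segal-fibrant output of the bar construction and the required Reedy fibrant object—together with the verification that the corrective replacement, after applying $f^*$, leaves the comparison with $A$ a weak equivalence.
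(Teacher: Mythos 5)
Your proof is correct and follows essentially the same route as the paper's: apply \cref{thm:bar} to $\Sigma f$ (using \cref{thm:ssegal} for strong Segality of $\Sigma\J$), take a Reedy fibrant replacement of the resulting diagram, and conclude using the fact that $f^*$ leaves underlying objects unchanged and hence preserves (levelwise) weak equivalences. The extra detail you supply—verifying the hypotheses of \cref{thm:bar} and constructing the Reedy fibrant replacement by well-founded induction via matching-object factorizations—is exactly what the paper leaves implicit.
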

\begin{proof}
  Apply \cref{thm:bar} to $\Sigma f$ to obtain $B'\in (\sSet^\J)^\Gm$ with a weak equivalence $A\to f^*B'$, and then let $B$ be a Reedy fibrant replacement of $B'$.
  Since $f^*$ doesn't change the underlying objects, it preserves weak equivalences, so the composite $A\to f^*B' \to f^* B$ is again a weak equivalence.
\end{proof}

\begingroup
\let\C\sSet
\begin{thm}\label{thm:invcat-fibrepl}
  For any Segal-fibrant \sSet-inverse category \I, there is a fibrant \sSet-inverse category \Ibar and an io-equivalence $\I\to\Ibar$.
\end{thm}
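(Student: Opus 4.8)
The plan is to construct \Ibar, together with the io-equivalence $\I\to\Ibar$, by well-founded recursion on the poset $(\I_0,\prec)$, adjoining the objects of \I ``from the bottom up'' and at each stage invoking \cref{thm:ibar} to replace the incoming hom-diagram by a Reedy fibrant one. Throughout I would keep $\Ibar_0=\I_0$ with the same relation $\prec$ and set $\Ibar(x)=\I(x)$, which is fibrant because \I is Segal-fibrant. Thus the object-set and object-space components of the io-equivalence are identities, and the only data left to produce are the hom-spaces $\Ibar(x,y)$ together with comparison maps $\I(x,y)\to\Ibar(x,y)$, which must be shown to be weak equivalences.

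For the inductive step, suppose we have already assembled a fibrant \sSet-inverse category $\strsl x\Ibar$ on the initial segment $\strsl{\I_0}{x}$, together with an io-equivalence $f:\strsl x\I\to\strsl x\Ibar$. By \cref{thm:collage}, the hom-data of \I out of $x$ is precisely an object $\I(x,-)\in(\sSet^{\strsl x\I})^{\I(x)}$. Unwinding the coproduct decomposition of $\Sigma(\strsl x\I)$ as in the proof of \cref{thm:SIfib}, the wide pullbacks computing the Segal-fibrancy of $\I(x,-)$ are exactly the pullbacks~\eqref{eq:sfibinv} whose top object is $x$; hence $\I(x,-)$ is Segal-fibrant because \I is. I would therefore apply \cref{thm:ibar} to the io-equivalence $f$, the Segal-fibrant object $A=\I(x,-)$, and the fibrant target $\strsl x\Ibar$, obtaining a Reedy fibrant $\Ibar(x,-)\in(\sSet^{\strsl x\Ibar})^{\I(x)}$ together with a weak equivalence $\I(x,-)\to f^*\Ibar(x,-)$.

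Appealing to \cref{thm:collage} in the other direction, the pair consisting of $\Ibar(x)=\I(x)$ and the object $\Ibar(x,-)$ extends $\strsl x\Ibar$ to a \sSet-inverse category $x/\Ibar$ by adjoining $x$ at the top. Since $\Ibar(x)$ is fibrant and $\Ibar(x,-)$ is Reedy fibrant, $x/\Ibar$ is fibrant in the sense of \cref{defn:invcat-fibrant}. Because $f^*$ leaves the underlying \sSet-objects of a diagram unchanged, the $y$-components of the weak equivalence $\I(x,-)\to f^*\Ibar(x,-)$ are precisely the maps $\I(x,y)\to\Ibar(x,y)$, and since this is a morphism of diagrams it is compatible with the actions, hence with composition; together with $f$ these assemble into an io-equivalence $x/\I\to x/\Ibar$. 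Running this step over all of $\I_0$ yields the fibrant \Ibar and the io-equivalence $\I\to\Ibar$.

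Nearly all of the homotopy-theoretic content is packaged into \cref{thm:ibar}, whose bar construction performs the actual fibrant replacement of $\I(x,-)$ over the corrected base $\strsl x\Ibar$ while altering it only up to levelwise weak equivalence. The one point I expect to require genuine care is the coherence of the recursion: I must ensure that the objects $\Ibar(x,-)$ and the comparison maps glue into a single \sSet-inverse category and a single io-functor, rather than into merely stage-by-stage data whose restrictions to overlapping initial segments need not agree. As in the proof of \cref{thm:model-structure}, I would handle this by phrasing the construction as a dependently-typed well-founded recursion via \cref{thm:wf}---building a section over the well-founded poset \cinv---so that the restriction functors among the $x/\Ibar$ are themselves part of the recursively constructed data, rather than leaving it as a bare induction on a proposition.
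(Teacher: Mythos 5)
Your proposal is correct and follows essentially the same route as the paper: a dependently-typed well-founded recursion via \cref{thm:wf}, applying \cref{thm:ibar} to $f_{\strsl x \I}$ and $\I(x,\blank)$ at each stage, using the collage description of \cref{thm:collage} to extend to a fibrant $\overline{x/\I}$, and carrying the io-embeddings among the $\overline{x/\I}$ (and the identifications of the replaced hom-objects) along as part of the recursive data so that the stages glue coherently into \Ibar. The paper's only refinement is that the recursion runs over the subclass \ssetinvs of \emph{Segal-fibrant} \sSet-inverse categories rather than all of \cinv, since \cref{thm:ibar} needs Segal-fibrancy of $\I(x,\blank)$ (which, as you note, follows from that of \I) at every stage of the induction.
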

\begin{proof}
  The idea is that we can extend an io-equivalence $f_{\strsl x\I} : (\strsl x\I)\to \overline{\strsl x \I}$ with $\overline{\strsl x \I}$ fibrant to an io-equivalence $f_{x/\I}:(x/\I) \to \overline{x/\I}$ with $\overline{x/\I}$ fibrant, where $\overline{x/\I}(x,\blank)$ is obtained by applying \cref{thm:ibar} to $f_{\strsl x \I}$ and $\I(x,\blank)$.
  But there are technical details needed to make the well-founded recursion precise by applying \cref{thm:wf}.
  
  First, fix a particular function implementing \cref{thm:ibar}: thus it assigns to every io-equivalence $f:\I\to\J$, with \J fibrant, and Segal-fibrant $A \in (\sSet^\I)^\Gm$, a Reedy fibrant object $\Theta(f,A)\in (\sSet^\J)^\Gm$ and a weak equivalence $\theta_{f,A} : A\to f^* \Theta(f,A)$.
  Let \ssetinvs be the subclass of \ssetinv consisting of the Segal-fibrant \sSet-inverse categories, which inherits a well-founded relation from \ssetinv.
  We define $\Phi:\cZ \to \ssetinvs$ as follows.
  An object of $\cZ$ over \I is an io-equivalence $f_\I : \I\to \Ibar$ where \Ibar is fibrant.
  A morphism of \cZ over $(\strsl x \I)\prec \I$ is an io-embedding $\overline{\strsl x\I} \to \Ibar$ such that
  \begin{equation}\label{eq:fibrepl-sq}
    \vcenter{\xymatrix@-.5pc{
        \strsl x\I\ar[r]\ar[d]_{f_{\strsl x \I}} &
        \I\ar[d]^{f_\I}\\
        \overline{\strsl x\I}\ar[r] &
        \Ibar
      }}
  \end{equation}
  commutes, with an isomorphism $\Theta(f_{\strsl x \I},\I(x,\blank)) \cong \Ibar(x,\blank)$ such that the composite
  \begin{equation}
    \I(x,\blank) \xto{\theta_{f_{\strsl x \I},\I(x,\blank)}} f_{\strsl x \I}^*\Theta(f_{\strsl x \I},\I(x,\blank)) \toiso f_{\strsl x \I}^*\Ibar(x,\blank)\label{eq:fibrepl-thetaf}
  \end{equation}
  is equal to the action of $f_\I$.
  Note that $\I(x,\blank)$ is Segal-fibrant since \I is, while $\overline{\strsl x\I}$ is fibrant by assumption, so this $\Theta$ is valid.
  To compose morphisms of \cZ over $(\strsl y {(\strsl x \I)}) \prec (\strsl x \I)\prec \I$, we take the isomorphism to be the composite
  \begin{equation}
    \Theta(f_{\strsl y \I},\I(y,\blank)) \cong \overline{(\strsl x \I)}(y,\blank) \cong \Ibar(y,\blank)\label{eq:fibrepl-theta-comp}
  \end{equation}
  in which the second isomorphism comes from the fact that $\overline{\strsl x\I} \to \Ibar$ is an io-embedding.

  Applying \cref{thm:wf}, we may assume a partial section of $\Phi$ defined on $\strsl \ssetinvs \I$.
  Thus, we have io-equivalences $f_{\strsl x\I} : (\strsl x\I)\to \overline{\strsl x \I}$ for all $x\in\I$, with each $\overline{\strsl x \I}$ fibrant, and io-embeddings $\overline{\strsl y\I} \to \overline{\strsl x\I}$ for $y\prec x$ giving commutative squares
  \begin{equation*}
    \vcenter{\xymatrix@-.5pc{
        \strsl y\I\ar[r]\ar[d]_{f_{\strsl y \I}} &
        \strsl x \I\ar[d]^{f_{\strsl x \I}}\\
        \overline{\strsl y\I}\ar[r] &
        \overline{\strsl x\I}.
      }}
  \end{equation*}
  We also have $\Theta(f_{\strsl y \I},\I(y,\blank)) \cong \overline{(\strsl x \I)}(y,\blank)$ (using the fact that $(\strsl x \I)(y,\blank) = \I(y,\blank)$ by definition) such that the composite 
  \[ (\strsl x \I)(y,\blank) \to f_{\strsl y \I}^*\Theta(f_{\strsl y \I},\I(y,\blank)) \toiso f_{\strsl y \I}^*\overline{(\strsl x \I)}(y,\blank) \]
  is equal to the action of $f_{\strsl x \I}$.
  Moreover, when $z\prec y \prec x$, the composite 
  \[ \Theta(f_{\strsl z \I},\I(z,\blank)) \cong \overline{(\strsl y \I)}(z,\blank) \cong \overline{(\strsl x \I)}(z,\blank) \]
  is equal to the specified isomorphism $\Theta(f_{\strsl z \I},\I(z,\blank)) \cong \overline{\strsl x \I}(z,\blank)$.

  Our goal is to construct a fibrant \Ibar and an io-equivalence $f_\I : \I\to \Ibar$, along with io-embeddings $\overline{\strsl x \I} \to\Ibar$ giving~\eqref{eq:fibrepl-sq}, and isomorphisms $\Theta(f_{\strsl x \I},\I(x,\blank)) \cong \Ibar(x,\blank)$ such that~\eqref{eq:fibrepl-thetaf} equals $f_\I$, and whenever $y\prec x$,~\eqref{eq:fibrepl-theta-comp} is equal to the given $\Theta(f_{\strsl y \I},\I(y,\blank)) \cong \Ibar(y,\blank)$.
  First, define $\overline{x/\I}(x,\blank) = \Theta(f_{\strsl x \I},\I(x,\blank))$ for each $x\in \I$; by the argument in \cref{thm:collage}, this yields a fibrant $\overline{x/\I}$ with an io-equivalence $f_{x/\I}:x/\I \to \overline{x/\I}$ and an io-embedding $\overline{\strsl x \I} \to \overline{x/\I}$.
  Our inductive assumption implies each io-embedding $\overline{\strsl y \I} \into \overline{\strsl x \I}$ factors through $\overline{y/\I}$ by io-embeddings, so we have a composite io-embedding
  \(\overline{y/\I} \into \overline{\strsl x \I} \into \overline{x/\I}\).
  Each composite $\overline{z/\I} \to \overline{y/\I} \to \overline{x/\I}$ is equal to $\overline{z/\I} \to \overline{x/\I}$ by the inductive functoriality assumption, and the following diagrams commute by construction:
  \begin{equation}
    \vcenter{\xymatrix@-.5pc{
        y/\I\ar[r]\ar[d] &
        \strsl x\I\ar[r]\ar[d] &
        x/\I\ar[d]\\
        \overline{y/\I}\ar[r] &
        \overline{\strsl x \I}\ar[r] &
        \overline{x/\I}
      }}
    \qquad
    \vcenter{\xymatrix@-.5pc{
        \overline{\strsl y \I}\ar[r]\ar[d] &
        \overline{\strsl x \I}\ar[d]\\
        \overline{y/\I}\ar[r] &
        \overline{x/\I}
      }}\label{eq:fibrepl-xIcomm}
  \end{equation}

  Now we have a functor from $\I_0$ to the category of \C-inverse categories and io-embeddings which send $x$ to $\overline{x/\I}$.
  We define $\Ibar(x,y) = \overline{x/\I}(x,y)$; we can compose these since each $\overline{y/\I} \into \overline{x/\I}$ is an io-embedding, and the functoriality of these io-embeddings gives associativity.
  Since each $\overline{x/\I}$ is fibrant, so is $\Ibar$, and we have io-embeddings $\overline{x/\I}\into \Ibar$ giving commutative triangles as on the left below.
  \begin{equation*}
    \vcenter{\xymatrix@-.5pc{\overline{y/\I} \ar[r] \ar[d] & \Ibar \\ \overline{x/\I} \ar[ur] }}
    \qquad
    \vcenter{\xymatrix@-.5pc{
        x/\I\ar[r]\ar[d] &
        \I\ar[d]\\
        \overline{x/\I}\ar[r] &
        \Ibar.
      }}
  \end{equation*}
  Similarly, the io-equivalences $f_{x/\I}:x/\I \to \overline{x/\I}$ assemble into an io-equivalence $f_\I:\I\to \Ibar$ making the square on the above right commute.
  With~\eqref{eq:fibrepl-xIcomm}, this yields~\eqref{eq:fibrepl-sq}.
  The rest of the necessary properties follow directly from the definition of \Ibar.
\end{proof}
\endgroup

\begin{cor}\label{thm:ei-fibinv}
  Any inverse EI \io-category can be presented by an internal category in \sSet of the form $\Sigma\I$, where \I is a fibrant \sSet-inverse category.\qed
\end{cor}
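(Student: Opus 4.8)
The plan is to chain together the two main results of this section, \cref{thm:ei-inverse,thm:invcat-fibrepl}. First I would invoke \cref{thm:ei-inverse} to present the given inverse EI \io-category as $\Sigma\I$ for some \emph{Segal-fibrant} \sSet-inverse category \I. The only gap between this and the desired conclusion is that \I is merely Segal-fibrant rather than fibrant in the sense of \cref{defn:invcat-fibrant}, so all that remains is to upgrade \I to a fibrant \sSet-inverse category without changing the \io-category that it presents.

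Next I would apply \cref{thm:invcat-fibrepl} to \I, producing a \emph{fibrant} \sSet-inverse category \Ibar together with an io-equivalence $f:\I\to\Ibar$. To finish, I need $\Sigma\Ibar$ to present the same \io-category as $\Sigma\I$, which amounts to showing that $\Sigma f:\Sigma\I\to\Sigma\Ibar$ is a weak equivalence in the Horel model structure. For this I would appeal to the lemma immediately preceding \cref{thm:ibar}, which asserts that $\Sigma$ carries an io-equivalence between Segal-fibrant \sSet-inverse categories to a Horel weak equivalence. Its hypotheses require both source and target to be Segal-fibrant: \I is Segal-fibrant by construction, and \Ibar, being fibrant, is strongly Segal by \cref{thm:ssegal} and hence Segal-fibrant by \cref{thm:sseg-sfib}. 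Thus $\Sigma f$ is a Horel weak equivalence, so $\Sigma\Ibar$ presents the same \io-category as $\Sigma\I$, namely the originally given inverse EI \io-category, and \Ibar is fibrant as required.

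Since the substantive work lives in \cref{thm:ei-inverse,thm:invcat-fibrepl} and in the preservation lemma, I expect no genuine obstacle here: the corollary should follow formally. The one point warranting a moment's care is verifying the Segal-fibrancy hypothesis on the \emph{target} \Ibar, so that the preservation lemma actually applies. This is precisely where the chain \cref{thm:ssegal} followed by \cref{thm:sseg-sfib} is needed, translating the strong fibrancy condition of \cref{defn:invcat-fibrant} into the weaker Segal-fibrancy condition under which io-equivalences are known to be preserved by $\Sigma$.
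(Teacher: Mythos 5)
Your proposal is correct and is precisely the argument the paper intends (the corollary is marked \qed because it follows by exactly this chain): present the \io-category as $\Sigma\I$ with \I Segal-fibrant via \cref{thm:ei-inverse}, upgrade \I to a fibrant \Ibar via \cref{thm:invcat-fibrepl}, and conclude that $\Sigma f$ is a Horel weak equivalence using the lemma on io-equivalences, with the Segal-fibrancy of \Ibar supplied by \cref{thm:ssegal,thm:sseg-sfib}. Your attention to verifying the Segal-fibrancy hypothesis on the target is exactly the right point of care, and nothing further is needed.
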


\section{\io-presheaves}
\label{sec:homotopy-theory}

Our goal now is to compare the model structure of \cref{thm:model-structure} for $\C=\sSet$ to a standard presentation of \io-presheaves.
We begin with the following observations.


\begingroup
\let\C\sSet
\let\lC\lsSet
\begin{lem}\label{thm:SI}
  For any \C-inverse category \I, the \C-indexed category $\lC^\I$ from \cref{defn:idiag} is equivalent to the ordinary \C-indexed diagram category $\lC^{\Sigma \I}$.
\end{lem}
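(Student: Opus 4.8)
The plan is to exhibit, for each base object $X$, a functor from the fiber $(\lC^\I)^X$ to $(\lC^{\Sigma\I})^X$ together with a quasi-inverse, and then to check that these are compatible with reindexing so as to assemble into an equivalence of $\C$-indexed categories. Since $\C=\sSet$ is a presheaf topos, it is infinitary-extensive with pullback-stable coproducts (the same fact already used to define $\Sigma\I$ in \cref{defn:sigma-i}), so I can freely pass between a map into a coproduct and a coproduct of its pullbacks.

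First I would send an $\I$-diagram $A\in(\lC^\I)^X$, with its components $A_x\to\I(x)$ over $X$ and actions $A_x\times_{\I(x)}\I(x,y)\to A_y$, to the internal $\Sigma\I$-diagram whose underlying object is $\coprod_{x\in\I_0}A_x\to\coprod_{x\in\I_0}\I(x)=\Sigma\I_0$. By stability of coproducts, the domain of the required action map decomposes as
\[
\Big(\coprod_{x}A_x\Big)\times_{\Sigma\I_0}\Sigma\I_1
\;\cong\;
\coprod_{y\prec x}A_x\times_{\I(x)}\I(x,y)
\;\sqcup\;
\coprod_{x}A_x,
\]
the two families of summands coming from the source maps of the noninvertible summands $\I(x,y)$ and of the identity summands $\I(x)$ of $\Sigma\I_1$. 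I define the action to be the given $\I$-action $A_x\times_{\I(x)}\I(x,y)\to A_y$ on the first family and the identity on the second; the unit law for a $\Sigma\I$-diagram is then exactly the statement that the second family acts trivially, so it holds automatically, and the associativity law decomposes over the four summands of $\Sigma\I_1\times_{\Sigma\I_0}\Sigma\I_1$ described in \cref{defn:sigma-i} into the associativity of the $\I$-action together with the two unit laws.

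Conversely, given $B\in(\lC^{\Sigma\I})^X$, extensivity writes its underlying object $B\to\Sigma\I_0$ as $\coprod_x B_x$ with $B_x\to\I(x)$, and the action of $\Sigma\I_1$ restricts along the same decomposition: on the summand $\I(x,y)$ it yields a map $B_x\times_{\I(x)}\I(x,y)\to B_y$, while on the identity summands the unit law forces it to be the identity, carrying no further data. Setting $A_x=B_x$ thus recovers an $\I$-diagram, and these two assignments are mutually inverse up to the canonical coproduct-decomposition isomorphisms. A morphism of $\Sigma\I$-diagrams over $\Sigma\I_0$ is precisely a family $A_x\to B_x$ over the $\I(x)$, and compatibility with the full $\Sigma\I$-action is equivalent to compatibility with the noninvertible actions, the identity-compatibility being automatic; so morphisms correspond as well. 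Finally, reindexing along $f:Y\to X$ is pullback of $A_x\to X$ on the left and of $B\to X$ on the right, and these agree under the correspondence because coproducts are pullback-stable, giving naturality in $X$. The only real work is the bookkeeping of this coproduct decomposition, and the main point to verify carefully is that the extra identity summand $\coprod_x\I(x)$ of $\Sigma\I_1$ contributes no data beyond what the unit law already fixes, so that an internal $\Sigma\I$-diagram is exactly an $\I$-diagram.
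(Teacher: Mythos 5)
Your proposal is correct and is essentially the paper's own argument, which likewise invokes extensivity of $\sSet$ to decompose an object $A \to X\times\Sigma\I_0$ into a family $\{A_x \to X\times\I(x)\}_{x\in\I_0}$ and the $\Sigma\I_1$-action into the actions $A_x\times_{\I(x)}\I(x,y)\to A_y$. You have simply spelled out the bookkeeping (the identity summands being forced by the unit law, associativity, morphisms, and reindexing) that the paper leaves implicit.
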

\begin{proof}
  Extensivity of \C implies (see~\cite{clw:ext-dist}) that a morphism $A\to X\times \Sigma\I_0$ is uniquely determined by a family of objects $\{A_x\}_{x\in\I_0}$ with morphisms $A_x \to X\times \I(x)$.
  Similarly, an action of $\Sigma\I_1$ on $A$ decomposes into actions $A_x \times_{\I(x)} \I(x,y) \to A_y$.
\end{proof}
\endgroup

\begin{thm}[{\cite[Proposition 6.6]{horel:model-intsscat}}]\label{thm:proj-model}
  If \K is a strongly Segal internal category in \sSet, then there is a \textbf{projective model structure} on $\sSet^\K$ whose fibrations and weak equivalences are created by the forgetful functor $\sSet^\K \to \sSet/\K_0$.
\end{thm}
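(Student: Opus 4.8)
The plan is to realize this model structure as a \emph{right-induced (transferred)} one along the free–forgetful adjunction. Observe first that an internal $\K$-diagram is precisely an algebra for the monad $T$ on the slice $\sSet/\K_0$ given by $T(X) = \K_1 \times_{\K_0} X$, where the pullback is formed along the source map $s\colon \K_1\to \K_0$, the structure map to $\K_0$ is the target $t\colon \K_1\to \K_0$, the multiplication is composition in $\K$, and the unit is the identity-assigning map. Thus $\sSet^\K = (\sSet/\K_0)^T$, the forgetful functor $U\colon \sSet^\K \to \sSet/\K_0$ is monadic, and its left adjoint $F$ (the free-diagram functor) satisfies $UF = T$. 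The slice $\sSet/\K_0$ carries the model structure created from the Kan–Quillen one on $\sSet$; it is combinatorial, say with generating cofibrations $I$ and generating trivial cofibrations $J$, and its weak equivalences, cofibrations, and fibrations are all detected on underlying simplicial sets. First I would transfer this structure along $F\dashv U$, producing on $\sSet^\K$ a cofibrantly generated structure with generating (trivial) cofibrations $FI$ (resp.\ $FJ$), in which a map is a fibration or weak equivalence exactly when $U$ sends it to one.

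By the standard transfer theorem for cofibrantly generated model categories along a right adjoint, two hypotheses must be verified. The smallness hypothesis is formal: $\sSet/\K_0$ is locally presentable and $T = t_!\,s^*$ is accessible (indeed cocontinuous, being a composite of the left adjoints $s^*$ and $t_!$ in the locally cartesian closed $\sSet$), so $\sSet^\K$ is locally presentable and the small object argument applies to $FI$ and $FJ$; this also identifies the $FJ$-injective maps with the $U$-fibrations by adjunction. The remaining, genuinely homotopical, hypothesis is the \emph{acyclicity condition}: $U$ must send every relative $FJ$-cell complex to an underlying weak equivalence. I expect this to be the main obstacle.

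To discharge acyclicity I would run the path-object argument (in the style of Berger–Moerdijk and Schwede–Shipley): given the fibrant replacement functor supplied by the small object argument on $FJ$, it suffices to equip each (transferred-)fibrant diagram with a path object whose structure maps $U$ sends to a weak equivalence and a fibration. Here the strong Segal hypothesis does the work. Since $\K_0$ is fibrant, a diagram whose underlying map to $\K_0$ is a fibration has fibrant underlying simplicial set, so the cotensor $A^{\Delta^1}$, which $U$ preserves because $U$ creates limits, is a genuine path object: its factorization of the diagonal $A\to A^{\Delta^1}\to A\times A$ is carried by $U$ to the usual path object of $UA$, a weak equivalence followed by a fibration (the latter because $\partial\Delta^1\hookrightarrow\Delta^1$ is a cofibration and $UA$ is fibrant). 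That $s$ and $t$ are fibrations enters in ensuring $T$ is homotopically well-behaved, since pullback along the fibration $s$ preserves weak equivalences by right properness of $\sSet$, so that the free functor $F$ and the cell attachments it produces respect weak equivalences; this is exactly what is needed to conclude, via the standard lifting-and-retract argument, that every relative $FJ$-cell complex is an underlying weak equivalence. With acyclicity in hand the transfer theorem yields the model structure, whose fibrations and weak equivalences are by construction created by $U\colon\sSet^\K\to\sSet/\K_0$, as asserted (cf.\ \cite[Proposition 6.6]{horel:model-intsscat}).
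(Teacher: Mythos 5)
First, a remark on the comparison: the paper does not prove this statement at all — it is imported verbatim from Horel (his Proposition 6.6), so there is no internal proof to measure you against. Your transfer strategy is the natural (and presumably Horel's own) route, and your setup is correct: $\sSet^\K$ is the category of algebras for the monad $T = t_!s^*$ on $\sSet/\K_0$, this monad is cocontinuous, the slice model structure is combinatorial, and the smallness half of the transfer theorem is formal, as you say. The genuine gap is in the acyclicity step. The path object argument cannot be run with ``the fibrant replacement functor supplied by the small object argument on $FJ$'': in the Quillen/Berger--Moerdijk formulation, the fibrant replacement functor must come with unit maps $A \to RA$ that are \emph{already known} to be underlying weak equivalences (that hypothesis carries the whole weight: one lifts $B \to RA$ against the cell complex $j$, compares with $r_B$ through a path object on $RB$, and concludes by two-out-of-six). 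The small object argument applied to $FJ$ only produces units that are relative $FJ$-cell complexes — and whether relative $FJ$-cell complexes are underlying weak equivalences is precisely the acyclicity condition you are trying to establish. As written, the argument is circular.

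Fortunately, the ingredients in your last paragraph assemble into a complete and simpler proof that bypasses path objects and fibrant replacements entirely. Since $T$ is cocontinuous, the forgetful functor $U$ creates all colimits; hence the underlying map of a cobase change (in $\sSet^\K$) of $F(j)$, for $j\in J$, is the cobase change in $\sSet/\K_0$ of $T(j)$, and more generally $U$ sends a relative $FJ$-cell complex to a transfinite composite of cobase changes of the maps $T(j)$. Now $T$ preserves trivial cofibrations: pullback preserves monomorphisms, and pulling a weak equivalence back along (a pullback of) the fibration $s\colon\K_1\to\K_0$ yields a weak equivalence by right properness of $\sSet$ — this is exactly where the strong Segal hypothesis enters — while $t_!$ does not change the underlying map. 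So each $T(j)$ is a trivial cofibration, hence so is every transfinite composite of cobase changes of such maps, and every relative $FJ$-cell complex is an underlying trivial cofibration. That is acyclicity, and the transfer theorem then yields the stated model structure. (If you do wish to retain path objects for other purposes, note also that the product in $\sSet/\K_0$, and in $\sSet^\K$, is the fibered product over $\K_0$, so the correct path object is the fiberwise cotensor $A^{\Delta^1}\times_{\K_0^{\Delta^1}}\K_0$ rather than the absolute $A^{\Delta^1}$, which lives over $\K_0^{\Delta^1}$ and does not factor the relevant diagonal.)
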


Recall that by \cref{thm:ssegal}, $\Sigma\I$ is strongly Segal whenever \I is fibrant.

\begin{lem}\label{thm:reedy=projective}
  For a fibrant $\sSet$-inverse category \I, the equivalence of categories from \cref{thm:SI} is a right Quillen equivalence from the Reedy model structure on $\sSet^\I$ of \cref{thm:model-structure} to the projective model structure on $\sSet^{\Sigma\I}$.
\end{lem}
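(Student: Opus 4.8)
The plan is to reduce the statement to a levelwise comparison of the three distinguished classes of maps, exploiting the fact that an equivalence of categories underlying a Quillen adjunction is automatically a Quillen equivalence. Write $\Psi\colon \sSet^\I \to \sSet^{\Sigma\I}$ for the equivalence of \cref{thm:SI} (assembling a family $\{A_x\}$ into a single object over $\Sigma\I_0$ carrying its $\Sigma\I_1$-action). Its quasi-inverse is a left adjoint $L$ with $L\dashv\Psi$, and the unit and counit of this adjunction are natural isomorphisms. Consequently it suffices to show two things: that $\Psi$ is right Quillen, and that $\Psi$ (equivalently $L$) preserves weak equivalences. Granting these, the total derived functors coincide with $\Psi$ and $L$ on homotopy categories, and since the unit and counit are isomorphisms they are inverse equivalences; hence $\Psi$ is a Quillen equivalence. (The projective model structure exists at all because $\Sigma\I$ is strongly Segal by \cref{thm:ssegal}.)

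First I would identify the projective classes explicitly. By \cref{thm:proj-model} the projective fibrations and weak equivalences on $\sSet^{\Sigma\I}$ are created by the forgetful functor to $\sSet/\Sigma\I_0$; since $\Sigma\I_0 = \coprod_x \I(x)$ and coproducts in $\sSet$ are disjoint, we have $\sSet/\Sigma\I_0 \simeq \prod_x \sSet/\I(x)$, and by the elementary coproduct observation from the proof of \cref{thm:SIfib} (a coproduct of maps is a fibration iff each summand is, and likewise for weak equivalences) a map is a projective fibration, respectively weak equivalence, exactly when each $A_x \to B_x$ is a fibration, respectively weak equivalence, in $\sSet$. Thus under $\Psi$ the projective fibrations and weak equivalences are precisely the \emph{levelwise} ones. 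On the Reedy side the weak equivalences are levelwise by construction in \cref{thm:model-structure}, so $\Psi$ and $L$ both preserve and reflect weak equivalences, which settles the second of the two required properties.

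The key step is that $\Psi$ carries Reedy fibrations to levelwise fibrations. This is exactly \cref{thm:fib-objwise}: since \I is fibrant and $\sSet$, being a type-theoretic model category, makes $\I_0\op$ pre-admissible, every Reedy fibration $A\to B$ has each $A_x \to B_x$ a fibration, and is therefore a projective fibration. A Reedy acyclic fibration is in particular a Reedy fibration and a levelwise weak equivalence, so each $A_x \to B_x$ is a levelwise acyclic fibration, i.e.\ the map is a projective acyclic fibration. Hence $\Psi$ is right Quillen, completing the first required property and, with the previous paragraph, the proof.

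The only genuinely substantive input is \cref{thm:fib-objwise}; everything else is coproduct-decomposition bookkeeping together with the general principle about equivalences of categories. The point worth emphasizing — and where a naive attempt would stall — is that the two model structures are \emph{not} isomorphic: the Reedy fibrations form a proper subclass of the projective (levelwise) fibrations, and dually the projective cofibrations form a proper subclass of the Reedy (levelwise) cofibrations. One should resist trying to match the two weak factorization systems on the nose; the comparison succeeds precisely because the two structures share the same class of weak equivalences, so a one-sided containment of fibrations is all that is needed.
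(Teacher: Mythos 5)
Your proof is correct and takes essentially the same approach as the paper's: both identify the weak equivalences on the two sides as the levelwise maps via the coproduct decomposition, and both invoke \cref{thm:fib-objwise} to conclude that Reedy fibrations (and hence Reedy acyclic fibrations) are projective ones. The paper's proof is simply a two-sentence version of yours, leaving implicit the adjunction bookkeeping and the fact that an equivalence of categories that is right Quillen and preserves/reflects weak equivalences is automatically a Quillen equivalence.
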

\begin{proof}
  Since $\coprod_i X_i \to \coprod_i Y_i$ is a fibration or weak equivalence if and only if each $X_i \to Y_i$ is, the Reedy weak equivalences in $\sSet^\I$ coincide with the projective ones in $\sSet^{\Sigma\I}$.
  And by \cref{thm:fib-objwise}, every Reedy fibration in $\sSet^\I$ is an objectwise fibration, hence a projective fibration in $\sSet^{\Sigma\I}$.
\end{proof}

\begin{thm}[{\cite{pbb:groth-segal}}]
  If \K\ is a strongly Segal internal category in \sSet, there is a quasicategory $X$ presenting the same \io-category \K and a zigzag of Quillen equivalences from the projective model structure on $\sSet^\K$ to the left fibration model structure on $\sSet/X$ (as studied in~\cite[\S2.1]{lurie:higher-topoi}).
\end{thm}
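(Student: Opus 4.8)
The plan is to reduce everything to the Grothendieck construction for Segal objects established in~\cite{pbb:groth-segal}, and then to transport the base along the chain of model-categorical comparisons between presentations of \io-categories that we have already assembled. First I would produce the quasicategory $X$. Since \K is strongly Segal, its bisimplicial nerve $N\K$ is projective-fibrant in the Segal-space model structure (as in the discussion preceding \cref{thm:rfrnsfic}); let $W$ be a Reedy fibrant replacement, which by the argument of \cref{thm:reedy=rezk} is a complete Segal space presenting the same \io-category as \K. I then set $X = i_1^* W$, so that by the right Quillen equivalence $i_1^*$ from the injective Rezk model structure to the Joyal model structure~\cite{jt:qcat-segal}, $X$ is a quasicategory presenting the same \io-category.

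The heart of the argument is the Grothendieck construction of~\cite{pbb:groth-segal}, which supplies a relative-nerve (``unstraightening'') functor sending an internal diagram $A\in\sSet^\K$ to a left fibration over $N\K$, and exhibits this as a Quillen equivalence from the projective model structure of \cref{thm:proj-model} to a covariant (left-fibration) model structure on bisimplicial sets over $W$. Here Segal-fibrancy of \K (equivalently, of $N\K$) is precisely what makes the bar-type relative nerve land in left fibrations and behave homotopically, matching the fibrations and weak equivalences created by $\sSet^\K \to \sSet/\K_0$ with those of the covariant structure. I would then compose with the base-change functor $i_1^*$, which induces a Quillen equivalence from left fibrations over the complete Segal space $W$ to Lurie's left fibration model structure on $\sSet/X$~\cite[\S2.1]{lurie:higher-topoi}; where convenient one interposes the identity Quillen equivalence from the projective to the injective Rezk model structure. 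Concatenating these gives the required zigzag.

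The main obstacle is that essentially all of the substance lives in~\cite{pbb:groth-segal}: establishing that the relative nerve is a Quillen equivalence onto left fibrations, and that this equivalence is compatible with passage to quasicategories along $i_1^*$. Our remaining task is then one of reconciling conventions: identifying the projective model structure of \cref{thm:proj-model} with the one used there, and verifying that \emph{strong} Segality of \K (rather than completeness) already suffices. The subtlest point I anticipate is the invariance of the covariant model structure under the fibrant replacement $N\K \to W$; one must check that base change along this levelwise equivalence of Segal spaces is itself a Quillen equivalence on left fibrations, which is part of the straightening theory but must be invoked with care.
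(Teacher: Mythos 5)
Your zigzag has the same shape as the paper's --- the Grothendieck construction over $N\K$, base change along a replacement of the base, the projective-to-injective identity, and the passage to simplicial sets over the first row --- but there is a genuine gap in your construction of $X$. You take $W$ to be a \emph{Reedy} fibrant replacement of $N\K$ and assert that ``by the argument of \cref{thm:reedy=rezk}'' it is a complete Segal space. That lemma applies only to nerves $N\Sigma\I$ of Segal-fibrant $\sSet$-\emph{inverse} categories, and its proof uses the inverse EI structure essentially: well-foundedness of $\prec$ is what rules out non-identity equivalences and forces completeness. For a general strongly Segal \K --- the actual hypothesis of this theorem --- the assertion is false. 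Concretely, let \K be the one-object internal groupoid on the discrete group $\lZ/2$: it is strongly Segal, its nerve $N\K$ is levelwise discrete and hence already Reedy fibrant, and it is a Segal space that is \emph{not} complete (its space of homotopy equivalences has two components while its space of objects has one). So your $W$ need not be complete, and the two places where you use completeness break down: first, $i_1^*$ is a right Quillen functor from the \emph{Rezk} model structure~\cite{jt:qcat-segal}, so applying it to a merely Reedy fibrant Segal space does not compute its derived functor, and the claim that $X = i_1^*W$ presents the same \io-category as \K is precisely what would then require a separate (and nontrivial) argument; second, the comparison \cite[Theorem 1.22]{pbb:groth-segal} between left fibrations over the bisimplicial base and left fibrations over its first row is invoked by the paper for a \emph{complete} Segal space.

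The repair is what the paper does: take $R$ to be fibrant replacement in the complete-Segal-space (Rezk) model structure rather than the Reedy one, and set $X = i_1^*RN\K$. Then $X$ is legitimately a quasicategory presenting the same \io-category (since $i_1^*$ is applied to a Rezk-fibrant object, and $N$ and $R$ preserve weak equivalences), and base change along $N\K\to RN\K$ is still a Quillen equivalence of left-fibration model structures by \cite[Corollary 5.7]{pbb:groth-segal}, which covers this Rezk-local equivalence and not only levelwise ones. Note that in the case the paper actually needs later, $\K=\Sigma\I$ for a fibrant $\sSet$-inverse category \I, your argument would go through precisely because \cref{thm:reedy=rezk} then applies; but the theorem is stated for arbitrary strongly Segal \K, and the distinction between Reedy and Rezk fibrant replacement is exactly what that generality costs.
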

\begin{proof}
  Let $X = i_1^* R N \K$, where $N$ denotes the bisimplicial nerve, $R$ denotes complete-Segal-space fibrant replacement, and $i_1^*$ takes complete Segal spaces to quasicategories as in~\cite{jt:qcat-segal}.
  Since $N$ and $R$ preserve all weak equivalences, and $i_1^*$ is a right Quillen equivalence, $X$ presents the same \io-category as $\K$.
  The desired zigzag is
  \begin{align*}
    (\sSet^\K)_{\mathrm{proj}}
    &\leftrightarrows (\ssSet/N\K)_{\mathrm{left,proj}}
    \tag*{\cite[Theorem 1.40]{pbb:groth-segal}}\\
    &\rightleftarrows (\ssSet/RN\K)_{\mathrm{left,proj}}
    \tag*{\cite[Corollary 5.7]{pbb:groth-segal}}\\
    &\rightleftarrows (\ssSet/RN\K)_{\mathrm{left,inj}}\\
    &\leftrightarrows (\sSet/i_1^* RN\K)_{\mathrm{left}}
    \tag*{\cite[Theorem 1.22]{pbb:groth-segal}}
  \end{align*}
  Here $(\ssSet/B)_{\mathrm{left,proj}}$ and $(\ssSet/B)_{\mathrm{left,inj}}$ are the projective and injective versions of the \emph{left fibration model structure} over a Segal space $B$ from~\cite[Proposition 1.10]{pbb:groth-segal}.
  The unlabeled equivalence is an identity functor, which is a Quillen equivalence.
\end{proof}

\begin{cor}\label{thm:reedy-io}
  For a fibrant \sSet-internal category \I, there is a zig-zag of Quillen equivalences relating the Reedy model structure on $\sSet^\I$ with a model category
  presenting the \io-category of diagrams over the \io-category presented by $\Sigma\I$.\qed
\end{cor}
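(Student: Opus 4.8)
The plan is to obtain the zig-zag by concatenating the two results immediately preceding, with the only genuine input being the link between fibrancy and strong Segality. First I would invoke \cref{thm:reedy=projective}: since \I is fibrant, the equivalence of categories from \cref{thm:SI} is a right Quillen equivalence from the Reedy model structure on $\sSet^\I$ to the projective model structure on $\sSet^{\Sigma\I}$. This supplies the first (and only new) leg of the zig-zag.

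Next I would check that the hypotheses of the preceding theorem of~\cite{pbb:groth-segal} are met for $\K = \Sigma\I$. That theorem requires $\Sigma\I$ to be strongly Segal, which holds by \cref{thm:ssegal} precisely because \I is fibrant. Applying it then produces a quasicategory $X = i_1^* R N\Sigma\I$ presenting the same \io-category as $\Sigma\I$, together with a zig-zag of Quillen equivalences from the projective model structure on $\sSet^{\Sigma\I}$ to the left fibration model structure $(\sSet/X)_{\mathrm{left}}$.

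Splicing the two zig-zags along their common vertex $\sSet^{\Sigma\I}$ yields a zig-zag of Quillen equivalences connecting the Reedy model structure on $\sSet^\I$ to $(\sSet/X)_{\mathrm{left}}$. Finally I would identify the homotopy theory presented at the far end: the left fibration model structure on $\sSet/X$ presents the \io-category of (space-valued) diagrams over the \io-category presented by $X$, and $X$ presents the same \io-category as $\Sigma\I$; so the whole zig-zag relates $\sSet^\I$ to a presentation of the diagram \io-category over the \io-category presented by $\Sigma\I$, as required. Since the construction is pure concatenation, there is no real obstacle: the sole point needing verification is the implication fibrant $\Rightarrow$ strongly Segal for $\Sigma\I$, and this is exactly \cref{thm:ssegal}.
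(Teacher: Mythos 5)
Your proposal is correct and matches the paper's intended argument exactly: the corollary carries a \qed precisely because it is the concatenation of \cref{thm:reedy=projective} with the theorem of~\cite{pbb:groth-segal} applied to $\K=\Sigma\I$, using \cref{thm:ssegal} to verify strong Segality, and identifying $(\sSet/X)_{\mathrm{left}}$ as a presentation of the diagram \io-category. No discrepancies to report.
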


\begin{cor}
  The \io-category of diagrams on any inverse EI \io-category can be presented by the Reedy model structure on $\sSet^\I$ for some fibrant \sSet-inverse category \I.\qed
\end{cor}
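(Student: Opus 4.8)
The plan is to obtain this as an immediate composite of the two preceding corollaries, so the argument should be a single bookkeeping step rather than any new construction. Given an arbitrary inverse EI \io-category \D, the first move is to invoke \cref{thm:ei-fibinv}, which supplies a fibrant \sSet-inverse category \I whose associated internal category $\Sigma\I$ presents \D. The second move is to apply \cref{thm:reedy-io} to this same \I; it yields a zigzag of Quillen equivalences between the Reedy model structure on $\sSet^\I$ (the one constructed in \cref{thm:model-structure}) and a model category presenting the \io-category of diagrams over the \io-category presented by $\Sigma\I$. Composing these two facts, the Reedy model structure on $\sSet^\I$ presents the \io-category of diagrams over \D, which is exactly the claim.

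The only point needing care is that the two corollaries refer to ``the \io-category presented by $\Sigma\I$'' through the same route, namely the bisimplicial nerve followed by complete-Segal-space fibrant replacement developed in \cref{sec:eicats}; with that identification the base \io-category appearing in \cref{thm:reedy-io} is literally the \D produced by \cref{thm:ei-fibinv}. Since both inputs are already established, there is no genuine obstacle here --- the corollary is purely a matter of threading the two presentations together, which is why it can be asserted with \qed and no displayed proof.
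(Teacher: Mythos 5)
Your proposal is correct and matches the paper's intended argument exactly: the corollary is stated with \qed precisely because it is the immediate composite of \cref{thm:ei-fibinv} (producing a fibrant \sSet-inverse category \I with $\Sigma\I$ presenting the given inverse EI \io-category) and \cref{thm:reedy-io} (the zigzag of Quillen equivalences between the Reedy model structure on $\sSet^\I$ and a presentation of diagrams on the \io-category presented by $\Sigma\I$). Your remark about both results referring to the same presentation of $\Sigma\I$ via the Horel/bisimplicial-nerve route is the right point of care, and it holds as you say.
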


\section{Type-theoretic fibration categories}
\label{sec:type-theory}

Type-theoretic fibration categories were defined in~\cite{shulman:invdia} to abstract
the categorical structure that interprets type theory. 
The intent 
was to 
emphasize the homotopy-theoretic point of view that they are particular categories of fibrant objects~\cite{brown:ahtgsc}.

\begin{defn}\label{def:ttfc}
  A \textbf{type-theoretic fibration category} is a category \C with:
  \begin{enumerate}[leftmargin=*,label=(\arabic*)]
  \item A terminal object $1$.\label{item:cat1}
  \item A subcategory of \textbf{fibrations} containing all the isomorphisms and all the morphisms with codomain $1$.\label{item:cat2a}
    A morphism is called an \textbf{acyclic cofibration} if it has the left lifting property with respect to all fibrations.
  \end{enumerate}
  such that
  \begin{enumerate}[leftmargin=*,label=(\arabic*),resume]
  \item All pullbacks of fibrations 
    exist and are fibrations.\label{item:cat3}
  \item The dependent product of a fibration along a fibration exists and is again a fibration.
    Thus, acyclic cofibrations are stable under pullback along fibrations.\label{item:cat4}
  \item Every morphism factors as an acyclic cofibration followed by a fibration.\label{item:cat7}
  \end{enumerate}
\end{defn}

In~\cite{shulman:invdia} the following property was included in the definition, but Joyal has pointed out that it follows from the other axioms.

\begin{lem}\label{thm:cat8}
  If $g:B\to C$ and $gi: A\to C$ are fibrations, $i: A\to B$ is an acyclic cofibration, and both squares below are pullbacks (hence $f:Y\to Z$ and $fj:X\to Z$ are fibrations by~\ref{item:cat3}), then $j:X\to Y$ is also an acyclic cofibration.
  \[\vcenter{\xymatrix{
      X\ar[r]^{j}\ar[d]^r \pullback &
      Y\ar[r]^f\ar[d]^q \pullback &
      Z\ar[d]^p\\
      A\ar[r]_i &
      B\ar[r]_g &
      C
    }}\]
\end{lem}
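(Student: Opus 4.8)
The plan is to exhibit $j$ as the base change along $p$ of a \emph{strong deformation retract inclusion}, and then to invoke two standard facts about categories of fibrant objects (which a type-theoretic fibration category is by design): that an acyclic cofibration between objects fibrant over a base is a strong deformation retract inclusion over that base, and conversely that every strong deformation retract inclusion is an acyclic cofibration. First I would record the shape of the diagram. Since both squares are pullbacks, so is the outer rectangle; thus, regarding $A$ and $B$ as objects of $\C/C$ via the fibrations $gi$ and $g$, we have $Y\cong p^*B$ and $X\cong p^*A$, and $j$ is precisely $p^*i$. The lemma's parenthetical remark that $f$ and $fj$ are fibrations says exactly that $Y$ and $X$ are fibrant over $Z$.

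Next, working in the slice over $C$, I would promote the acyclic cofibration $i$ to a strong deformation retract inclusion. A retraction $s\colon B\to A$ with $si=\mathrm{id}_A$ comes from lifting $\mathrm{id}_A$ against the fibration $gi$ along $i$, using the LLP defining acyclic cofibrations. Choosing a path object $B\xrightarrow{c}P_C B\xrightarrow{\partial}B\times_C B$ over $C$ via the factorization axiom~\ref{item:cat7}, where $\partial=(\partial_0,\partial_1)$ and $\partial_0 c=\partial_1 c=\mathrm{id}_B$, and then filling the square with top $ci$ and bottom $(is,\mathrm{id}_B)$ against the fibration $\partial$, I obtain a fibrewise homotopy $H\colon is\simeq\mathrm{id}_B$ that is constant on $A$, i.e.\ $Hi=ci$. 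Applying the finite-limit-preserving functor $p^*\colon\C/C\to\C/Z$ transports all of this data verbatim, producing a retraction $s'=p^*s$ of $j$ together with a homotopy $H'=p^*H\colon js'\simeq\mathrm{id}_Y$ that is constant on $X$ and valued in $Q:=p^*P_C B$.

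The crux is that $Q$ is again a genuine path object, now for $Y$ over $Z$, even though $p$ need not be a fibration. The endpoint maps $\partial_0,\partial_1\colon P_C B\to B$ are trivial fibrations, being fibrations admitting the common section $c$, which is a weak equivalence, so that $\partial_0$ and $\partial_1$ are weak equivalences by two-out-of-three. Hence their pullbacks $p^*\partial_0,p^*\partial_1\colon Q\to Y$ along $q$ are again trivial fibrations, since trivial fibrations are stable under \emph{arbitrary} pullback in a category of fibrant objects. Together with the fibration $(p^*\partial_0,p^*\partial_1)\colon Q\to Y\times_Z Y$ and the section $p^*c$, this makes $Q$ a path object for $Y$ over $Z$ and exhibits $j$ as a strong deformation retract inclusion over $Z$. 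The second standard fact then gives that $j$ is an acyclic cofibration, as desired.

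The main obstacle is exactly this base-change step: the pullback is taken along $p$, which is \emph{not} assumed to be a fibration, so axiom~\ref{item:cat4} (stability of acyclic cofibrations under pullback along fibrations) does not apply, and one cannot conclude that $p^*c$, or equivalently $j=p^*i$, is an acyclic cofibration directly. What rescues the argument is that one only needs the \emph{endpoint projections} of the path object to remain trivial fibrations, and trivial fibrations---unlike acyclic cofibrations---are stable under pullback along an arbitrary map. If one prefers to avoid base change of the path object altogether, the alternative is to construct a fresh path object $P_Z Y$ from the fibrancy of $Y$ over $Z$ and transport $H'$ across it, appealing to the independence of the fibrewise homotopy relation between maps of fibrant objects from the chosen path object.
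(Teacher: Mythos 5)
The first half of your argument is fine: the two lifting squares exhibiting $i$ as a fibrewise strong deformation retract over $C$ are correct, and pullback-stability of trivial fibrations (together with two-out-of-three) does show that $p^*c\colon Y\to Q$ is a weak equivalence, so that $Q=p^*P_CB$ is a path object ``in the weak sense''. The gap is the final invocation of your ``second standard fact''. In this setting that fact is proved by a transport argument: to solve a lifting problem against a fibration $\pi\colon E\to Y$ one lifts $\mathrm{id}_E$ against the map $E\to Q\times_{p^*\partial_0,Y}E$ induced by the constant-path section, and this is legitimate only because that map is a pullback of the section along a fibration, so that axiom~\ref{item:cat4} applies --- which requires the section to be an \emph{acyclic cofibration}, not merely a weak equivalence. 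Your $Q$ has not been shown to have this property, and it cannot be granted for free: the statement ``$p^*c$ is an acyclic cofibration'' is itself an instance of the lemma you are proving (take $i=c$ and take $g$ to be the fibration $P_CB\to C$, so that $gi$ is the original fibration $B\to C$). So as written the proof is circular exactly at its crux; the hard content of the lemma is hidden inside the quoted ``standard fact''.

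Your fallback does not repair this. The usual comparison of homotopies across different path objects is obtained by lifting the target's constant-path map against the \emph{source's} section, so it again needs $p^*c$ to be an acyclic cofibration; if instead you factor $p^*c$ as $Y\to Q'\to Q$ and transfer $H'$ along a section of the trivial fibration $Q'\to Q$, the endpoints survive but the rel-$X$ equation $H'j=(p^*c)j$ is destroyed, leaving a mere (non-strong) deformation retract, which does not yield the lifting property. The paper's own proof avoids homotopies entirely: when $p$ is a fibration the conclusion is immediate from axiom~\ref{item:cat4} (pull $i$ back along the fibration $q$); factoring $p$ as an acyclic cofibration followed by a fibration reduces the general case to the case where $p$ is an acyclic cofibration; then $q$ and $r$ are acyclic cofibrations, being pullbacks of $p$ along the fibrations $g$ and $gi$, hence so is $qj=ir$, and von Glehn's left-cancellation lemma (if $qj$ and $q$ are acyclic cofibrations, so is $j$) finishes the proof. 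To rescue your route you would have to prove the strong-deformation-retract criterion for \emph{weak} path objects, and I do not see how to do that short of an argument with the same content as the lemma itself.
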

\begin{proof}
  If $p$ is a fibration, so is $q$, hence pullback along $q$ preserves acyclic cofibrations.
  Thus, factoring $p$, we may assume it is an acyclic cofibration.
  Since $g$ and $gi$ are fibrations, $q$ and $r$ are acyclic cofibrations, hence so is $ir = qj$.
  We conclude by:\phantom{\qedhere}
\end{proof}

\begin{lem}[von Glehn]
  If $gf$ and $g$ are acyclic cofibrations, so is $f$.
\end{lem}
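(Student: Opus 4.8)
The plan is to show directly that $f$ lifts against every fibration, feeding the lifting properties of $g$ and $gf$ into a single lifting problem; no factorization or weak-equivalence machinery is needed. Write $f\colon A\to B$ and $g\colon B\to C$, so that $gf\colon A\to C$, and suppose $g$ and $gf$ are acyclic cofibrations. Fix a fibration $p\colon E\to D$ together with a commutative square having top edge $a\colon A\to E$, bottom edge $b\colon B\to D$, left edge $f$, and right edge $p$, so that $pa=bf$; the goal is a diagonal $h\colon B\to E$ with $hf=a$ and $ph=b$.

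First I would transport the bottom map $b$ up across $g$. Since every morphism with codomain $1$ is a fibration (\cref{def:ttfc}) and $g$ is an acyclic cofibration, the lifting problem posed by $b\colon B\to D$, the map $g$, and $D\to 1$ has a solution $c\colon C\to D$ with $cg=b$. Next I would exploit the stronger lifting property of $gf$: the square formed by $a\colon A\to E$ and $c\colon C\to D$ commutes against $p$, because $pa=bf=(cg)f=c(gf)$, so since $gf$ is an acyclic cofibration and $p$ a fibration there is $e\colon C\to E$ with $e(gf)=a$ and $pe=c$.

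Finally I would set $h=eg\colon B\to E$ and verify that it solves the original square: $hf=egf=e(gf)=a$ and $ph=peg=cg=b$. Hence $f$ has the left lifting property against all fibrations, i.e.\ $f$ is an acyclic cofibration.

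The argument is short, so there is no serious obstacle; the only point requiring thought is the \emph{order} of the two liftings. One must first enlarge the target map $b$ from $B$ to $C$ using the weaker hypothesis on $g$, and only then apply the stronger hypothesis on $gf$ to obtain a map \emph{out of} $C$, whose restriction $eg$ along $g$ supplies the desired diagonal. Everything after that is the associativity bookkeeping $egf=e(gf)$ and $peg=cg$, which is immediate. It is worth remarking that neither the factorization axiom nor \cref{thm:cat8} is used: the two given lifting properties, together with the fact that $D\to 1$ is a fibration, suffice.
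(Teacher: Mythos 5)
Your proof is correct and is essentially identical to the paper's own argument: both first extend the bottom map of the lifting square along $g$ (using that every map to $1$ is a fibration), then lift the enlarged square against $p$ via the acyclic cofibration $gf$, and finally precompose that lift with $g$ to obtain the diagonal. Only the variable names differ.
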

\begin{proof}
  Suppose $tf = ps$, with $p:A\to B$ a fibration.
  Since $B\to 1$ is a fibration and $g$ an acyclic cofibration, we have $h$ with $hg=t$.
  Since $gf$ is an acyclic cofibration, we have $k$ with $p k = h$ and $k (g f) = s$.
  Therefore, $kg$ satisfies $(kg)f=s$ and $p(kg)=hg=t$.
\end{proof}

The main theorem from~\cite{shulman:invdia} we will use is the preservation of type-theoretic fibration categories under gluing, i.e.\ certain comma categories.
The functors we can glue along are these:

\begin{defn}\label{def:fibfr}
 A functor between type-theoretic fibration categories is a \textbf{strong fibration functor} if it preserves terminal objects, fibrations, pullbacks of fibrations, and homotopy equivalences.
\end{defn}

Here the ``homotopy equivalences'' are defined using the path objects constructed from the fibration structure.

\begin{thm}[{\cite{shulman:invdia}}]\label{thm:gluing}
  If \C and \D are type-theoretic fibration categories and $G:\C\to\D$ is a strong fibration functor, then the category $(\D\dn G)\f$, equipped with the Reedy fibrations, is a type-theoretic fibration category.
  If \C and \D contain universe objects satisfying the univalence axiom (see~\cite{shulman:invdia}), so does $(\D\dn G)\f$.
  Moreover, the forgetful functor $(\D\dn G)\f \to \C$ preserves all of the structure strictly.
\end{thm}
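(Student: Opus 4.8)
The plan is to verify the axioms of \cref{def:ttfc} one at a time for $(\D\dn G)\f$, whose objects are triples $(D,C,\phi)$ with $C$ fibrant in \C and $\phi:D\to GC$ a fibration in \D, and whose Reedy fibrations are the maps $(d,c):(D,C,\phi)\to(D',C',\phi')$ such that $c$ is a fibration in \C and the induced map $D\to D'\times_{GC'}GC$ (the pullback existing since $Gc$ is a fibration) is a fibration in \D. The forgetful functor $(\D\dn G)\f\to\C$ is $(D,C,\phi)\mapsto C$; since I would build every piece of structure by keeping the \C-component unchanged from \C and then constructing a \D-component over it, strict preservation will hold automatically, settling the last clause.

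The easy axioms come first. The terminal object is $(1,1,\mathrm{id})$, using that $G$ preserves $1$; that the Reedy fibrations form a subcategory containing the isomorphisms and all maps to the terminal object is routine. Pullbacks of Reedy fibrations are formed componentwise — pull back in \C, pull back in \D — using that $G$ preserves pullbacks of fibrations so the induced comparison map in \D is again a fibration. For the factorization axiom I would use the Reedy factorization: factor the \C-component $c=p\circ i$ as an acyclic cofibration followed by a fibration, form the pullback $D'\times_{GC'}G\tilde C$ (legitimate since $Gp$ is a fibration), and factor the canonical \D-map $D\to D'\times_{GC'}G\tilde C$ likewise. The resulting left factor is an acyclic cofibration by the usual Reedy lifting argument against Reedy fibrations, for which the stability of acyclic cofibrations under pullback (as in \cref{thm:cat8}) supplies the key input.

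The crux, and the step I expect to be the main obstacle, is the dependent-product axiom, precisely because $G$ is \emph{not} assumed to preserve dependent products. Given Reedy fibrations $X\to Y$ and $Z\to X$, the \C-component of $\prod$ is simply the dependent product in \C, but the \D-component must be a ``mixed'' construction: it combines a dependent product in \D (along the \D-component of the fibration) with a local exponential and a pullback correction mediated by $G$, compensating for the fact that $G$ does not commute with the \C-dependent product appearing in the new base. This is exactly the shape of the hom-object $(\pi_2)_*\bigl((c^*B_x)^{\pi_1^*A_x}\bigr)$ of \cref{thm:homs}, and I would establish both its universal property and the Reedy-fibration conclusion by transcribing the Beck--Chevalley and distributivity manipulations of \cref{thm:homs-fibration}; the only inputs needed about $G$ are that it preserves fibrations and pullbacks of fibrations, which keep all the comparison maps and mate isomorphisms in place.

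Finally, for univalence I would glue the universes, letting the universal family of $(\D\dn G)\f$ classify a small fibration in \C together with a small fibration in \D lying over its $G$-image, with total space assembled from the universal families of \C and \D. Univalence then reduces to showing that the canonical map from identity types to the type of equivalences is itself an equivalence, and here the last clause of \cref{def:fibfr} is essential: because $G$ preserves homotopy equivalences, the equivalences in the glued universe decompose as compatible pairs of equivalences in the two factors, so univalence transports from \C and \D through the comma construction. Strict preservation by the forgetful functor to \C is then immediate, each structure having been defined with its \C-component taken verbatim from \C.
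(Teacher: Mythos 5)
Your reconstruction of the basic fibration-category structure on $(\D\dn G)\f$ is correct and follows the same route as the cited source: the Reedy factorization (factor the \C-component, pull back, factor the induced \D-map), componentwise pullbacks, and the ``mixed'' dependent product combining a \D-dependent product with a local exponential and a $G$-mediated pullback correction are exactly the constructions of~\cite{shulman:invdia}, and indeed none of them needs $G$ to preserve acyclic cofibrations. Note, though, that the paper does not reprove this part at all --- the theorem is imported by citation, and the paper's own contribution to its proof lives entirely in \cref{sec:fibfr}.

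The genuine gap is in your univalence clause, and it is precisely the issue that \cref{sec:fibfr} exists to repair. Your claim that ``equivalences in the glued universe decompose as compatible pairs of equivalences in the two factors, so univalence transports'' is too coarse: the \D-level data of a map in $(\D\dn G)\f$ is fibered over ($G$ applied to) its \C-level data, so the internal type of equivalences is a dependent sum over $\mathrm{Equiv}_\C$, not a product, and univalence requires comparing this type with the Reedy path object of the glued universe, whose \D-component sits over $G(P_{U_0})$. In~\cite{shulman:invdia} that comparison is done with \emph{explicit} path objects $(P_BA)_1$, and their construction there uses the hypothesis that $G$ preserves acyclic cofibrations (so that $GA_0\to G(P_{B_0}A_0)$ is an acyclic cofibration). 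Under \cref{def:fibfr} this hypothesis is unavailable: one must instead build $(P_BA)_1$ via the mapping-path-space factorization of \cref{sec:fibfr} (producing the equivalence $g$ and the fibration retraction $q$), and in the univalence reduction one must cope with the fact that $G(\r{b_0})$ is \emph{not} $\r{Gb_0}$, using the contractibility of $\sum_{p_0':\fId(a_0,a_0)}\fId\bigl(p_0',g^{-1}(G(\r{}))\bigr)$ to recover the same reduction up to equivalence. Your sketch, which invokes only that $G$ preserves homotopy equivalences of objects/maps, never constructs path objects in a form usable for this analysis, and it also leaves unaddressed the second use of the stronger hypothesis (the acyclic-fibration argument of~\cite{shulman:invdia} involving $\alpha^*(G(P_{A_0}B_0))$, likewise repaired in \cref{sec:fibfr}). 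As written, the univalence step would fail exactly where $G$ fails to preserve acyclic cofibrations.
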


Here $(\D\dn G)\f$ is the subcategory of {Reedy fibrant objects} in the comma category $(\D\dn G)$;
a morphism $A\to B$ in $(\D\dn G)$ is a \textbf{Reedy fibration} if $A_0 \to B_0$ is a fibration in \C and the induced map $A_1 \to G A_0 \times_{G B_0} B_1$ is a fibration in \D, and $A$ is \textbf{Reedy fibrant} if $A\to 1$ is a Reedy fibration.
In~\cite{shulman:invdia} I also assumed that $G$ preserves acyclic cofibrations; for a sketch of how the proof needs to be modified without this assumption, see \cref{sec:fibfr}.

Let \C be a type-theoretic fibration category; we will show that diagrams on \C-inverse categories are also type-theoretic fibration categories, hence model type theory.
We apply \cref{sec:category-theory} by using the fibrations of \C as \emph{both} the fibrations and the prefibrations.

\begin{defn}[\cite{shulman:invdia}]\label{def:Ilim}
  For $I$ a well-founded poset, \C has \textbf{Reedy $I\op$-limits} if
  \begin{enumerate}
  \item Any Reedy fibrant $A\in\sC^{I\op}$ has a limit, which is fibrant in \sC.\label{item:il1}
  \end{enumerate}
  and for Reedy fibrant $A$ and $B$ and any morphism $f: A\to B$, the following hold:
  \begin{enumerate}[resume]
  \item If $f$ is a Reedy fibration, then $\lim f: \lim A \to \lim B$ is a fibration in \sC.\label{item:il2}
  \item If $f$ is a levelwise equivalence, then $\lim f$ is an equivalence in \sC.\label{item:il3}
  \item If $f$ is a Reedy acyclic cofibration, then $\lim f$ is an acyclic cofibration in \sC.\label{item:il4}
  \end{enumerate}
  We say $I$ is \textbf{admissible} for \C if \C has Reedy $(\strsl I x)\op$-limits for all $x\in I$.
\end{defn}

\begin{defn}\label{defn:admissible}
  A \C-inverse category \I is \textbf{admissible} if $\I_0$ is admissible for \C (hence also pre-admissible as in \cref{defn:pre-admissible}).
\end{defn}

This is automatic if each $\strsl x \I_0\op$ is finite or if \C is a type-theoretic model category.

Recall from~\cite[Lemma 5.9]{shulman:invdia} that \C satisfies \emph{function extensionality} if and only if dependent products along fibrations preserve acyclicity of fibrations.

\begin{lem}\label{thm:homs-eqv}
  Suppose \C satisfies function extensionality, \I is a fibrant \C-inverse category, $A\in (\lC^\I)^X$ and $B,B'\in (\lC^\I)^Y$ are Reedy fibrant, and $\lC^{X\times Y}$ has Reedy $\I_0\op$-limits.
  If $g:B \to B'$ is a homotopy equivalence, so is $\lC^\I(A,B) \to \lC^\I(A,B')$.
\end{lem}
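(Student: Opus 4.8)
The plan is to fix the Reedy fibrant $A\in(\lC^\I)^X$ and show that the hom-functor $F=\lC^\I(A,\blank)$, from Reedy fibrant objects over $Y$ to objects over $X\times Y$, preserves homotopy equivalences. The strategy is to first upgrade \cref{thm:homs-fibration} to the acyclic case, and then feed this into a standard ``category of fibrant objects'' argument that a functor preserving acyclic fibrations preserves homotopies.

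First I would prove the acyclic analogue of \cref{thm:homs-fibration}: if $g:B\to B'$ is a Reedy \emph{acyclic} fibration between Reedy fibrant objects, then $\lC^\I(A,g)$ is an acyclic fibration. The proof reruns the well-founded induction of \cref{thm:homs-fibration} essentially verbatim, checking that each operation occurring there also preserves acyclic fibrations. Pullbacks preserve acyclic fibrations; the local exponential by the fibration $\pi_1^*A_x$ preserves them because pullback along a fibration preserves acyclic cofibrations (axiom (4) of \cref{def:ttfc}); the Beck--Chevalley isomorphism and the inductive matching-object step are unchanged; and the one genuinely new input is that the dependent product $(\pi_2)_*$ preserves acyclic fibrations, which holds \emph{precisely} because \C satisfies function extensionality, as recalled just before the statement. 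As in \cref{thm:matching-fibration}, this simultaneously gives that $M_x A\to M_x B$ is an acyclic fibration whenever $A\to B$ is a Reedy acyclic fibration, which is what the induction consumes at the matching-object stage.

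With this in hand, $F$ preserves fibrations (\cref{thm:homs-fibration}), acyclic fibrations (the previous step), and all finite limits, since any hom-functor preserves every limit preserved by the reindexing functors (compare the opening line of the proof of \cref{thm:matching-model-gluing}). From these I deduce that $F$ preserves homotopies. Given a homotopy between maps $B\to B'$, encoded by a path object $B'\xto{r} P\xto{(\pi_0,\pi_1)} B'\times B'$ in the diagram category with $(\pi_0,\pi_1)$ a Reedy fibration and $\pi_0,\pi_1$ therefore Reedy acyclic fibrations, the image $FP\to FB'\times FB'$ is again a fibration and $F(r)$ is a section of the acyclic fibration $F(\pi_0)$. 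Since the target $\lC^{X\times Y}$ is a type-theoretic fibration category, two-out-of-three for homotopy equivalences forces $F(r)$ to be a homotopy equivalence, so $FP$ is a genuine path object for $FB'$ and $F$ carries the homotopy to a homotopy. Finally, a homotopy inverse $g'$ of $g$ together with the homotopies $g'g\simeq 1_B$ and $gg'\simeq 1_{B'}$ (which exist because $B$ and $B'$ are Reedy fibrant) are sent by $F$ to $F(g')F(g)=F(g'g)\simeq 1$ and $F(g)F(g')=F(gg')\simeq 1$, exhibiting $\lC^\I(A,g)=F(g)$ as a homotopy equivalence.

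The main obstacle is the first step: verifying that every stage of the construction of \cref{thm:homs-fibration} preserves acyclic fibrations, the crux being the dependent-product step, which is exactly where function extensionality is indispensable. A secondary point to attend to is the availability of path objects for the Reedy fibrant diagrams $B$ and $B'$ inside the diagram category (equivalently, the factorization axiom), which comes from the type-theoretic fibration structure on inverse diagrams established elsewhere in this section.
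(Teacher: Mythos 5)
Your Step 1 is sound and matches the paper's toolkit: the induction of \cref{thm:homs-fibration} does go through for Reedy acyclic fibrations once function extensionality makes dependent products along fibrations (and local exponentials by fibrations) preserve acyclicity. The gap is in Step 2. Your Ken-Brown-style argument requires path objects and factorizations in the category of Reedy fibrant \I-diagrams, i.e.\ it requires the Reedy fibrations to make $\C^\I\f$ (or $(\lC^\I)^Y\f$) a type-theoretic fibration category. You point to that structure being ``established elsewhere in this section,'' but in this paper it is established \emph{after} the present lemma and \emph{from} it: \cref{thm:homs-eqv} is exactly what makes the matching-object functor $M_x$ a strong fibration functor (\cref{thm:matching-fibfr}), which is the crucial input to the gluing induction proving \cref{thm:ttfc}. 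So your proof, as written, is circular. Moreover, \cref{thm:ttfc} requires \I to be admissible, whereas the present lemma assumes only that $\lC^{X\times Y}$ has Reedy $\I_0\op$-limits, so the factorizations you invoke are not even available under the lemma's stated hypotheses.

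The paper's proof avoids any appeal to homotopical structure on the diagram category: it runs the well-founded induction directly on the statement. The hypothesis on $g$ is consumed levelwise (each $B_x \to B'_x$ is an equivalence in \C); the comparison map~\eqref{eq:homsfib-map} from \cref{thm:homs-fibration} is shown to be an equivalence by rerunning that construction, using that pullback, dependent product along a fibration, and local exponentiation by a fibration all preserve equivalences \emph{between fibrations} (function extensionality entering in the last two, and 2-out-of-3 supplying the starting equivalence $B_x \to M_x B \times_{M_x B'} B'_x$); the projection from the relevant pullback is an equivalence because equivalences in \C are stable under pullback along fibrations; and the induction closes with condition (iii) of \cref{def:Ilim}, that Reedy limits take levelwise equivalences of Reedy fibrant diagrams to equivalences. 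If you wanted to rescue your route, you would have to add to the lemma the hypothesis that the Reedy fibrations make the relevant diagram category a type-theoretic fibration category (which does hold in the lemma's one application, where $\C^{\strsl x \I}\f$ is assumed to be one) and also identify the acyclic fibrations of that structure with the Reedy acyclic fibrations handled in your Step 1; but that yields a strictly weaker lemma and entangles it with the later induction, whereas the paper's direct argument lets the lemma stand on its own.
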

\begin{proof}
  We modify the proof of \cref{thm:homs-fibration} slightly.
  Since Reedy limits preserve equivalences between Reedy fibrant objects, for the inductive step it suffices to show each $\lC^{x/\I}(A,g): \lC^{x/\I}(A,B) \to \lC^{x/\I}(A,B')$ is an equivalence if $g$ is.
  This is the top morphism in the following square, in which we have also included the pullback:
  \begin{center}
    \begin{tikzcd}[row sep=3em,column sep=huge]
      (\pi_2)_* {\left((c^*B_x)^{\pi_1^*A_x}\right)} \ar[r,"{\lC^{x/\I}(A,g)}"]\ar[d]
      \ar[dr,phantom,"\bullet"{name=PB},near start]
      \ar[dashed,to=PB,"r"']
      &
      (\pi_2')_* {\left(({c'}^*B'_x)^{{\pi'_1}^*A_x}\right)} \ar[d]
      \ar[leftarrow,to=PB,"p"] \\
      \lC^{\strsl x \I}(A,B) \ar[r,"{\lC^{\strsl x \I}(A,g)}"'] 
      \ar[leftarrow,to=PB]&
      \lC^{\strsl x \I}(A,B').
    \end{tikzcd}
  \end{center}
  By the inductive hypothesis, the bottom morphism $\lC^{\strsl x \I}(A,g)$ is an equivalence.
  Since equivalences are stable under pullback along fibrations, the map $p$ is also an equivalence.
  Thus it suffices to show that $r$ is an equivalence.

  However, $r$ is the same morphism that in \cref{thm:homs-fibration} we were showing to be a fibration.
  Since pullback, dependent products along fibrations, and local exponentials by fibrations all preserve equivalences between fibrations (using function extensionality in the latter two cases), we can use the same argument as in \cref{thm:homs-fibration} once we know that $B_x \to M_x B \times_{M_x B'} B'_x$ is an equivalence.
  By 2-out-of-3, this follows from $B_x \to B_x'$ and $M_x B \times_{M_x B'} B'_x \to B'_x$ being equivalences:
  the first by assumption, and the second as a pullback of $M_x B \to M_x B'$ (an equivalence by the inductive hypothesis) along the fibration $B'_x \to M_xB'$.
\end{proof}


Recall that $\C^\I$ means $(\lC^\I)^1$; let $\C^\I\f$ be its full subcategory of Reedy fibrant objects.

\begin{lem}\label{thm:matching-fibfr}
  Suppose \C satisfies function extensionality, \I is fibrant and admissible, and that for some $x\in \I$, the Reedy fibrations make $\C^{\strsl x \I}\f$ into a type-theoretic fibration category.
  Then the functor $M_x : \C^{\strsl x \I}\f \to \C$ is a strong fibration functor.
\end{lem}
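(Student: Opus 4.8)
The plan is to verify the four defining clauses of a strong fibration functor (\cref{def:fibfr}) for $M_x$ one at a time, each by quoting a result already in hand. Recall that $M_x A = \lC^{\strsl x\I}(\I(x,-),A)$ is by construction an object carrying a canonical map to $\I(x)$ (the map of \cref{thm:matching}); all the limits appearing below are accordingly formed over $\I(x)$, exactly as in \cref{thm:matching-model-gluing}, and I read the codomain of $M_x$ in that light.

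First I would dispatch the terminal object and pullbacks of fibrations together. Since $M_x = \lC^{\strsl x\I}(\I(x,-),-)$ is a hom-functor, it preserves every limit that the reindexing functors of $\lC$ preserve, hence in particular the terminal object and all pullbacks; this is precisely the observation used in \cref{thm:matching-model-gluing}. Thus $M_x$ sends the terminal diagram $y\mapsto\I(y)$ to the terminal object and sends a pullback square of diagrams to a pullback square, so in particular pullbacks of fibrations are preserved. Preservation of fibrations is then immediate from \cref{thm:matching-fibration}: a fibration of the type-theoretic fibration category $\C^{\strsl x\I}\f$ is exactly a Reedy fibration, and since \I is fibrant and (being admissible) pre-admissible, \cref{thm:matching-fibration} carries it to a fibration $M_x A\to M_x B$.

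The real content is the last clause, preservation of homotopy equivalences, and for this I would invoke \cref{thm:homs-eqv} with its first argument frozen at $\I(x,-)$. Because \I is fibrant, $\I(x,-)$ is Reedy fibrant, so for any homotopy equivalence $g:A\to A'$ between Reedy fibrant objects of $\C^{\strsl x\I}\f$, \cref{thm:homs-eqv} yields that $\lC^{\strsl x\I}(\I(x,-),A)\to\lC^{\strsl x\I}(\I(x,-),A')$, i.e.\ $M_x A\to M_x A'$, is a homotopy equivalence; this is where function extensionality and the Reedy $\I_0\op$-limit structure (which admissibility of \I provides) enter. The hard part will be bookkeeping the hypotheses of \cref{thm:homs-eqv}: that statement varies its second argument with the first fixed, which is exactly the variance of $M_x$ in $A$, but one must confirm that the homotopy equivalences of $\C^{\strsl x\I}\f$ are the levelwise equivalences \cref{thm:homs-eqv} accepts, and that the resulting map is a homotopy equivalence for the fibration-category structure on the target (i.e.\ that the relevant path objects agree). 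Granting this, the four clauses together establish that $M_x$ is a strong fibration functor.
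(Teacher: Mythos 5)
Your proposal is correct and matches the paper's own (very brief) proof in substance: the paper likewise handles terminal objects and pullbacks by observing that hom-functors $\lD(A,-)$ preserve all limits that the reindexing functors of \lD do, and cites \cref{thm:matching-fibration} for preservation of fibrations and \cref{thm:homs-eqv} (instantiated with first argument $\I(x,-)$, Reedy fibrant since \I is fibrant) for preservation of homotopy equivalences. Your additional care in reading the codomain of $M_x$ relative to the canonical map to $\I(x)$ --- so that the image of the terminal diagram, namely $\I(x)$ itself, is terminal in the appropriate sense --- attends to a point the paper's one-line proof passes over silently, but it does not alter the argument.
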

\begin{proof}
  By \cref{thm:matching-fibration,thm:homs-eqv} it preserves fibrations and equivalences,
  and hom-functors $\lD(A,-)$ preserve all limits that the reindexing functors of \lD do.
\end{proof}

Thus, under the hypotheses of \cref{thm:matching-fibfr},~\cite[\S 13]{shulman:invdia} implies that $(\C\dn M_x)\f$ is a type-theoretic fibration category; while \cref{thm:matching-gluing} says that this category is equivalent to $\C^{x/\I}\f$.
This is the crucial step in the following theorem.


\begin{thm}\label{thm:ttfc}
  Suppose \C is a type-theoretic fibration category satisfying function extensionality, and \I is a fibrant and admissible \C-inverse category.
  Then the Reedy fibrations make $\C^\I\f$ into a type-theoretic fibration category, which has as many nested univalent universes as \C does.
  Moreover, if \C is cloven~\cite[Definition \ref{invdia:def:cttfc}]{shulman:invdia} or split~\cite[{Definition \ref{invdia:def:split}}]{shulman:invdia}, then so is $\C^\I\f$.
\end{thm}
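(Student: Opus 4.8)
The plan is to follow the well-founded induction in the proof of \cref{thm:model-structure} almost verbatim, substituting the type-theoretic gluing theorem \cref{thm:gluing} for the model-categorical \cref{thm:model-gluing}, and \cref{thm:matching-fibfr} for \cref{thm:matching-model-gluing}. As there, I would first choose factorizations and liftings (and, if \C is not already given as cloven, a cloven universe structure) to rigidify \C, since taking limits of type-theoretic fibration categories requires this algebraic data; faithfully propagating it is exactly what yields the final ``cloven or split'' clause. Note that the standing hypotheses of function extensionality, fibrancy, and admissibility are precisely those needed to invoke \cref{thm:matching-fibfr} at every stage.

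A naive induction showing only that each $\C^\J\f$ ``is a type-theoretic fibration category'' will not close, for the same reason as in \cref{thm:model-structure}: realizing the structure on $\C^\I\f$ as a limit of the $\C^{x/\I}\f$ requires that the restriction functors $\C^{x/\I}\f\to\C^{y/\I}\f$ be \emph{strict}, and strictness refers to the matching functors $M_y$. I would therefore apply \cref{thm:wf} to a dependently typed functor $\Phi:\cZ\to\cinv$ over the well-founded poset of \cref{thm:cinv-wf}: an object of \cZ over \J is a cloven type-theoretic fibration category structure on $\C^\J\f$ with the Reedy fibrations, equipped with as many nested univalent universes as \C has; a morphism over $(\strsl x\J)\prec\J$ records that the forgetful functor $\C^\J\f\to\C^{x/\J}\f$ is strict, where $\C^{x/\J}\f$ is structured by gluing $\C^{\strsl x\J}\f$ against \C along $M_x$.

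For the inductive step I would assume a section of $\Phi$ over $\strsl\cinv\I$ and assemble the pieces exactly as in the model-category argument. By \cref{thm:matching-fibfr} each $M_x:\C^{\strsl x\I}\f\to\C$ is a strong fibration functor, so \cref{thm:gluing} gives $(\C\dn M_x)\f$ a cloven type-theoretic fibration category structure carrying univalent universes, with a strict projection to $\C^{\strsl x\I}\f$; by \cref{thm:matching-gluing} this category is $\C^{x/\I}\f$. The inductive strictness data then makes $x\mapsto\C^{x/\I}\f$ a functor from $\I_0\op$ to cloven type-theoretic fibration categories and strict functors, whose underlying limit in \cCat is $\C^\I\f$ (an \I-diagram being exactly a coherent family of $x/\I$-diagrams). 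Applying the analogue of \cref{thm:model-limits} for cloven type-theoretic fibration categories from \cite{shulman:reedy} produces the structure on $\C^\I\f$ together with the strict restrictions required of a section over $\cinv/\I$.

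The step I expect to be the main obstacle is the univalent universe clause rather than the bare fibration-category structure, which is essentially formal given the cited inputs. \cref{thm:gluing} supplies a univalent universe at each gluing stage, but one must verify that these are preserved by the strict restriction functors and, crucially, that their limit over $\I_0\op$ is again \emph{univalent} in $\C^\I\f$. My approach is to fold the universe and its univalence into the cloven data carried by \cZ, so that the strict functors preserve it and the limit theorem transports it; univalence of the limiting universe then reduces to its univalence at each finite stage, where \cref{thm:gluing} already guarantees it. The split case runs identically, tracking split rather than merely cloven coherence throughout.
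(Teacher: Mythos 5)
Your proposal is correct and follows essentially the same route as the paper: the paper likewise cleaves \C, applies \cref{thm:wf} to a functor $\Phi:\cZ\to\cinv$ whose objects over \I are cloven type-theoretic fibration category structures on $\C^\I\f$ with as many univalent universes as \C and whose morphisms assert strictness of $\C^\I\f\to\C^{x/\I}\f\cong(\C\dn M_x)\f$, builds the inductive step from \cref{thm:matching-fibfr,thm:gluing,thm:matching-gluing}, and obtains the structure on $\C^\I\f$ as a limit in $\ttfc$ of the functor $x\mapsto\C^{x/\I}\f$ on $\I_0\op$. Your final worry about univalence surviving the limit is resolved exactly as you suggest: the universes are folded into the structure carried by \cZ, so the limit theorem for $\ttfc$ from \cite{shulman:reedy} transports them along with the rest of the cloven data.
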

\begin{proof}
  Suppose \C is cloven (otherwise, cleave it).
  We argue by well-founded induction as in \cref{thm:model-structure}, defining a section of the following functor $\Phi:\cZ\to \cinv$.
  An object of \cZ over $\I\in\cinv$ is a cloven type-theoretic fibration category structure on $\C^\I\f$ with as many univalent universes as \C.
  A morphism of \cZ over $(\strsl x\I)\prec \I$ is the assertion that $\C^\I\f \to \C^{x/\I}\f$ is a strict functor, when $\C^{x/\I}\f \cong (\C\dn M_x)\f$ is structured by gluing $\C^{\strsl x \I}\f$ with \C.

  Applying \cref{thm:wf} to construct a section of $\Phi$, we assume given a \C-inverse category \I and a section of $\Phi$ defined on $\strsl \cinv \I$, i.e.\ that $\C^{\strsl x \I}\f$ is a cloven type-theoretic fibration category for all $x\in \I$, and that if $y\prec x$ the functor $\C^{\strsl x \I}\f \to \C^{y/\I}\f \cong (\C\dn M_y)\f$ is strict.
  By \cref{thm:matching-fibfr,thm:gluing,thm:reedy-char}, each $\C^{x/\I}\f$ inherits such a structure, and the composite $\C^{x/\I}\f \to \C^{\strsl x\I}\f \to \C^{y/\I}\f$ is strict.
  We must extend this section to $\cinv/\I$, i.e.\ construct such a structure on $\C^\I\f$ such that each $\C^\I\f \to \C^{x/\I}\f \cong (\C\dn M_x)\f$ is strict.
  But the above structures on the categories $\C^{x/\I}\f$ yield a functor from $\I_0\op$ to \ttfc, whose limit in $\mathrm{Cat}$ is $\C^\I\f$.
  Thus its limit in \ttfc gives the desired structure on $\C^\I\f$.
\end{proof}


Finally, we specialize to the case when $\C=\sSet\f$.

\begin{cor}\label{thm:sset-invdia}
  For any fibrant $\sSet$-inverse category \I, the category $(\sSet\f)^\I$ supports a model of type theory\footnote{But see footnote~\ref{fn:initiality}.} with a unit type, dependent sums and products, identity types, and as many univalent universes as there are inaccessible cardinals.
\end{cor}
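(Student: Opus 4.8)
The plan is to assemble \cref{thm:sset-invdia} as a direct specialization of the general \cref{thm:ttfc}, taking $\C = \sSet\f$, the full subcategory of Kan complexes in simplicial sets. The work is essentially bookkeeping: I must verify that $\sSet\f$ meets each hypothesis of \cref{thm:ttfc}, and that a \emph{fibrant} \sSet-inverse category is automatically \emph{admissible} in the sense of \cref{defn:admissible}, so that the generic theorem applies verbatim.

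First I would recall that $\sSet\f$, with its Kan fibrations as the fibrations, is a type-theoretic fibration category: this is the archetypal example from~\cite{shulman:invdia}, ultimately resting on~\cite{klv:ssetmodel}. In particular it satisfies function extensionality, since dependent products along Kan fibrations preserve acyclicity (this is the characterization recalled just before \cref{thm:homs-eqv}). Moreover $\sSet\f$ is split, and carries a chain of nested univalent universes $\mathcal{U}_\kappa$ indexed by the inaccessible cardinals $\kappa$, each classifying fibrations with $\kappa$-small fibers; the univalence of these universes is exactly the content of~\cite{klv:ssetmodel}. This supplies every structural input that \cref{thm:ttfc} requires of \C, including the clause ``as many nested univalent universes as \C does.''

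Next I would dispatch admissibility. The remark following \cref{defn:admissible} observes that admissibility is automatic when \C is a type-theoretic model category, and $\sSet$ (with the Kan--Quillen model structure) is one; limits over a well-founded poset are computed by a right Quillen functor, which yields Reedy $I\op$-limits for every $I$ and hence makes every $I$ admissible. Thus any fibrant \sSet-inverse category \I is admissible, and the hypotheses ``\I is fibrant and admissible'' of \cref{thm:ttfc} reduce to the single stated hypothesis that \I is fibrant.

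With these verifications in hand, \cref{thm:ttfc} applies directly: the Reedy fibrations make $(\sSet\f)^\I\f = (\sSet\f)^\I$ into a type-theoretic fibration category with as many univalent universes as there are inaccessible cardinals, and since $\sSet\f$ is split so is $(\sSet\f)^\I$. A type-theoretic fibration category with univalent universes interprets intensional type theory with $\unit$, dependent sums and products, identity types, and univalent universes (modulo the initiality caveat flagged in footnote~\ref{fn:initiality}); this is precisely the list in the corollary. I expect no genuine obstacle here—the content is all in the preceding sections—so the only point needing care is the (notational) identification $(\sSet\f)^\I\f = (\sSet\f)^\I$, i.e.\ that when the ambient category is already a category of fibrant objects, the Reedy fibrancy condition defining the ``$\f$'' subcategory is the relevant fibrancy, matching \cref{defn:reedy-fibration}.
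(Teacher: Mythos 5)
Your overall strategy is exactly the paper's own: \cref{thm:sset-invdia} is obtained by specializing \cref{thm:ttfc} to $\C=\sSet\f$, using that $\sSet\f$ is a type-theoretic fibration category satisfying function extensionality with a univalent universe for each inaccessible cardinal (by~\cite{klv:ssetmodel}), and that admissibility comes for free because $\sSet$ is a type-theoretic model category, so Reedy $I\op$-limits of Reedy fibrant diagrams exist in the fibrant fragment $\sSet\f$ for every well-founded poset $I$. All of that is correct.

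The genuine error is your closing ``notational'' identification $(\sSet\f)^\I\f = (\sSet\f)^\I$, together with the reason you offer for it. Membership in $(\sSet\f)^\I$ only forces each $A_x\to 1$ to be a fibration (i.e.\ each $A_x$ is a Kan complex), since in \cref{sec:type-theory} the prefibrations are taken to equal the fibrations; Reedy fibrancy in the sense of \cref{defn:reedy-fibration} additionally demands that each $A_x\to M_x A$ be a Kan fibration, which is strictly stronger than objectwise fibrancy. Concretely, for the two-object \I of \cref{eg:sierpinski} (with $\I(x)=\I(y)=\I(y,x)=\unit$), the category $(\sSet\f)^\I$ consists of \emph{all} maps between Kan complexes, whereas $(\sSet\f)^\I\f$ consists of the Kan \emph{fibrations} between Kan complexes. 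In fact $(\sSet\f)^\I$ equipped with the Reedy fibrations cannot be a type-theoretic fibration category at all: \cref{def:ttfc}\ref{item:cat2a} requires every map to $1$ to be a fibration, i.e.\ every object to be Reedy fibrant. The correct reading---which is also how the paper intends its loosely stated corollary, as the paragraph following it makes explicit---is that \cref{thm:ttfc} structures the full subcategory $(\sSet\f)^\I\f$ of Reedy fibrant diagrams as the type-theoretic fibration category in which the type theory is interpreted (types being Reedy fibrations), and that this category coincides with $(\sSet^\I)\f$, the fibrant objects of the model category of \cref{thm:model-structure}. So your proof is repaired by deleting the claimed equality and asserting instead that the model lives in $(\sSet\f)^\I\f$.
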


Moreover, in this case, the type-theoretic fibration category $(\sSet\f)^\I$ arising from \cref{thm:ttfc} coincides with the underlying type-theoretic fibration category $(\sSet^\I)\f$ of fibrant objects in the model category $\sSet^\I$ from \cref{thm:model-structure}.
And since all objects of $\sSet^\I$ are cofibrant, the right homotopy equivalences in $(\sSet^\I)\f$ coincide with the model-categorical weak equivalences, so the two present the same \io-category.
(This condition, though sometimes omitted, is necessary; see~\cite{bordg:thesis}.)
Thus, the model of type theory from \cref{thm:sset-invdia} may be said to live in the \io-category presented by the model category $\sSet^\I$.
Combining this with \cref{thm:ei-fibinv}, we have:

\begin{cor}\label{thm:ei-tt}
  For any inverse EI \io-category \K, the \io-category $\ig^\K$ supports a model of type theory 
  with a unit type, dependent sums and products, identity types, and with as many univalent universes as there are inaccessible cardinals.\qed
\end{cor}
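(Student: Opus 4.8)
The plan is to combine three facts already in hand: the presentation theorem \cref{thm:ei-fibinv}, the construction of the type-theoretic model \cref{thm:sset-invdia}, and the identification in \cref{sec:homotopy-theory} of the \io-category presented by the Reedy model structure on $\sSet^\I$.

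First I would apply \cref{thm:ei-fibinv} to the given inverse EI \io-category \K, producing a fibrant \sSet-inverse category \I whose internal category $\Sigma\I$ presents \K. Then \cref{thm:sset-invdia} supplies $(\sSet\f)^\I$ with a model of type theory carrying a unit type, dependent sums and products, identity types, and as many univalent universes as there are inaccessible cardinals. The remaining task is to show that this model, constructed a priori in a $1$-categorical type-theoretic fibration category, genuinely lives in the \io-category $\ig^\K$.

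For this I would invoke the bridge recorded after \cref{thm:sset-invdia}: the type-theoretic fibration category $(\sSet\f)^\I$ produced by \cref{thm:ttfc} coincides with the subcategory $(\sSet^\I)\f$ of fibrant objects of the model category of \cref{thm:model-structure}, and by \cref{thm:reedy-io} this model category presents $\ig^\K$, the \io-category of diagrams on \K. Chaining these identifications places the model of type theory inside $\ig^\K$.

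The only point that genuinely requires care --- and where I expect whatever small obstacle there is to lie --- is ensuring that the model is transported to the \emph{correct} homotopy theory, i.e.\ that the internal homotopy equivalences of the type-theoretic fibration category agree with the model-categorical weak equivalences of $\sSet^\I$, rather than presenting some coarser or finer \io-category. I would settle this by observing that every object of $\sSet^\I$ is cofibrant, so that right homotopy equivalences between (fibrant) objects coincide exactly with weak equivalences. This cofibrancy condition is not a mere convenience but is necessary for the conclusion (cf.\ \cite{bordg:thesis}); granting it, the remaining assembly is formal and yields \cref{thm:ei-tt}.
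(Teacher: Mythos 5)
Your proposal is correct and matches the paper's own argument essentially step for step: the paper likewise combines \cref{thm:ei-fibinv}, \cref{thm:sset-invdia}, and \cref{thm:reedy-io}, identifying $(\sSet\f)^\I$ with $(\sSet^\I)\f$ and using cofibrancy of all objects of $\sSet^\I$ (with the same reference to~\cite{bordg:thesis} for the necessity of this condition) to show that right homotopy equivalences coincide with weak equivalences, so that the model lives in the correct \io-category.
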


\section{Fibrant internal inverse categories}
\label{sec:fibrant-categories}

To end the paper, we will describe more explicitly in some small examples what it means for a \C-inverse category to be ``fibrant'' in the sense of \cref{defn:invcat-fibrant}, and what the corresponding Reedy fibrant diagrams are.
We will express these in terms of the internal type theory of \C, so we begin with a brief review of this.

The \emph{types} in type theory correspond to fibrant objects, or more generally fibrations, in a category.
A type can depend on variables in some other type, e.g.\ if $x:A$ we might have a type $B(x)$ depending on $x$; this corresponds to having a fibrant object $A$ and a fibration $B \to A$, with each $B(x)$ representing its ``fiber over $x$''. 
In this case we can form its \emph{dependent sum} $\sm{x:A} B(x)$, which is the domain $B$ of the fibration, and also its \emph{dependent product} $\prd{x:A} B(x)$, which is obtained from the right adjoint to pullback along $A\to 1$. 

More formally, type theory consists of ``judgments'' that look like $\Gm \vdash p:P$ or $\Gm\vdash P\ty$, where $\Gm$ is a \emph{context} consisting of a list of variables assigned to types, each type perhaps depending on the previous ones.
For example, $(x:A), (y:B(x)), (z:C(x,y))$ is a context containing three variables.
Such a context represents categorically a tower of fibrations such as $C\to B\to A\to 1$.
A judgment $\Gm\vdash P\ty$ represents a further fibration $P\to C$ over the top object in this tower, and $\Gm \vdash p:P$ means that this fibration has a section. 
If some or all of the variables in $\Gm$ don't appear in $P$, that means this fibration was pulled back to $C$ from some earlier stage in the tower. 

\subsection{No objects}
\label{sec:no-objects}

There is a unique \C-inverse category with $\I_0 = \emptyset$, and a unique diagram in $(\lC^\I)^\Gm$ for every $\Gm$, which is vacuously Reedy fibrant.
(We are using $\Gm$ instead of $X$ because it corresponds to the ambient context in the internal type theory.)
In particular, for any $A\in (\lC^\I)^\Gm$ and $B\in (\lC^\I)^\De$ we have $\lC^\I(A,B) = \Gm\times \De$.
In terms of the internal type theory, this is the unit type regarded as in the context of $\Gm$ and $\De$:
\[ \Gm,\De \vdash \unit \ty. \]

\subsection{One object}
\label{sec:one-object}

Next, suppose that $\I_0 = \{x\}$, and hence the relation $\prec$ is empty.
Then a fibrant \I consists only of a fibrant object $\I(x)$, and an object $A\in (\lC^\I)^\Gm$ is just a map $A_x \to \I(x)\times \Gm$.
Since $x/\I = \emptyset$, we have $M_x A = \I(x)\times \Gm$; thus, $A$ is Reedy fibrant just when $A_x \to \I(x)\times \Gm$ is a fibration.
In terms of the internal type theory of \C, a fibrant \I with $\I_0 = \{x\}$ is just a type in the empty context:
\[ \vdash \I(x) \ty \]
and a (Reedy fibrant) diagram is just a type family
\[ \Gm, (u_x:\I(x)) \vdash A_x(u_x) \ty \]
In this language, the hom $\lC^\I(A,B)$ for $A\in (\lC^\I)^\Gm$ and $B\in (\lC^\I)^\De$ is
\[ \Gm, \De \vdash \prd{u_x:\I(x)} A_x(u_x) \to B_x(u_x) \ty \]
Since $\C^\I \cong \C/\I(x)$, we are just viewing a slice category in a different way.

\subsection{Two objects}
\label{eg:2obj}

Now suppose $\I_0 = \{x,y\}$, with $x\prec y$.
Then a fibrant \C-inverse category \I consists of fibrant objects $\I(x)$ and $\I(y)$ and a Reedy fibrant diagram $\I(y,\blank) \in (\lC^{y/\I})^{\I(y)}$.
By the previous example, this just means a fibration $\I(y,x) \to \I(x)\times \I(y)$; thus in the internal type theory \I consists of
\begin{align*}
  &\vdash \I(x) \ty\\
  &\vdash \I(y) \ty \\
  (u_y:\I(y)), (u_x:\I(x)) &\vdash \I(y,x)(u_y,u_x) \ty
\end{align*}
An object $A\in (\lC^\I)^\Gm$ consists of $A_x \to \I(x)\times \Gm$ and $A_y \to \I(y)\times \Gm$ with a map $A_y \times_{\I(y)} \I(y,x) \to A_x$.
By the previous two examples, its matching object $M_x A$ is
\[ \Gm, (u_x:\I(x)) \vdash \unit \ty \]
and its matching object $M_y A$ is 
\[ \Gm, (u_y:\I(y)) \vdash \tprd{u_x:\I(x)} \I(y,x)(u_y,u_x) \to A_x(u_x) \ty \]
Thus, a Reedy fibrant $A$ consists of
\begin{align*}
  \Gm, (u_x:\I(x)) &\vdash A_x(u_x) \ty\\
  \Gm, (u_y:\I(y)), \left(v_x : \tprd{u_x:\I(x)} \I(y,x)(u_y,u_x) \to A_x(u_x)\right) &\vdash A_y(u_y,v_x) \ty
\end{align*}
In other words, the type $A_y$ is \emph{indexed by} its elements' images in $A_x$ under all the morphisms in $\I(y,x)$.
The hom $\lC^\I(A,B)$ is
\begin{multline}
  \Gm,\De \vdash \tsm{f_x : \prd{u_x:\I(x)} A_x(u_x) \to B_x(u_x)}
  \tprd{u_y:\I(y)}\\
  \tprd{v_x : \tprd{u_x:\I(x)} \I(y,x)(u_y,u_x) \to A_x(u_x)}
  A_y(u_y,v_x) \to B(u_y, f_x \circ v_x) \ty
\end{multline}

\begin{eg}\label{eg:sierpinski}
  Suppose $\I(x) = \I(y) = \unit$ and that $\I(y,x)(u_y,u_x) = \unit$.
  Then up to equivalence, the variables $u_x$ and $u_y$ in the definition of $A$ may be ignored, while $v_x$ reduces simply to an element of $A_x$ (assuming function extensionality).
  Thus, a Reedy fibrant diagram over this \I consists of
  \begin{align*}
    \Gm &\vdash A_x \ty\\
    \Gm, (v_x:A_x) &\vdash A_y(v_x) \ty
  \end{align*}
  which is just the ``Sierpinski topos'' model from~\cite{shulman:invdia}.
\end{eg}

\begin{eg}\label{eg:inverse-cospan}
  Now suppose that $\I(x) = \unit$ while $\I(y) = \bool$, the two-element type with elements $\btrue:\bool$ and $\bfalse:\bool$.
  Let $\I(y,x)(u_y,u_x) = \unit$ for all $u_y$ and $u_x$.
  Then the variable $u_x$ can be disregarded, while a type $A_y$ dependent on $u_y:\I(y)$ consists up to equivalence of two types $A_{y,\btrue}$ and $A_{y,\bfalse}$.
  The type of $v_y$ is again equivalent to $A_x$, so a Reedy fibrant diagram over this $\I$ consists of
  \begin{align*}
    \Gm &\vdash A_x \ty\\
    \Gm, (v_x:A_x) &\vdash A_{y,\btrue}(v_x) \ty\\
    \Gm, (v_x:A_x) &\vdash A_{y,\bfalse}(v_x) \ty.
  \end{align*}
  This is just the ordinary inverse-diagrams model, for the inverse category
  \[ \xymatrix@-1pc{ (y,\btrue)\ar[dr] && (y,\bfalse) \ar[dl] \\ &x. } \]
\end{eg}

\begin{eg}\label{eg:inverse-span}
  Finally, suppose that $\I(x)=\bool$ and $\I(x) = \unit$, with $\I(y,x)(u_y,u_x) = \unit$ for all $u_y$ and $u_x$.
  Then $A_x$ consists up to equivalence of two types $A_{x,\btrue}$ and $A_{x,\bfalse}$, while $u_y$ can be disregarded, and the type of $v_x$ is equivalent to $A_{x,\btrue} \times A_{x,\bfalse}$.
  Thus, a Reedy fibrant diagram over this \I consists of
  \begin{align*}
    \Gm &\vdash A_{x,\btrue} \ty\\
    \Gm &\vdash A_{x,\bfalse} \ty\\
    \Gm, (v_{x,\btrue} : A_{x,\btrue}) , (v_{x,\bfalse} : A_{x,\bfalse}) &\vdash A_{y}(v_{x,\btrue}, v_{x,\bfalse}) \ty
  \end{align*}
  This is again an ordinary inverse-diagrams model, for the inverse category
  \[ \xymatrix@-1pc{ & y \ar[dl] \ar[dr] \\ (x,\btrue) && (x,\bfalse) } \]
\end{eg}

\subsection{Three objects}
\label{sec:three-objects}

Suppose $\I_0 = \{x,y,z\}$ with $x\prec y \prec z$ (hence $x\prec z$).
Then a fibrant \I consists of
\begin{align*}
  &\vdash \I(x) \ty\\
  &\vdash \I(y) \ty\\
  &\vdash \I(z) \ty\\
  (u_y:\I(y)), (u_x:\I(x)) &\vdash \I(y,x)(u_y,u_x) \ty\\
  (u_z:\I(z)), (u_x:\I(x)) &\vdash \I(z,x)(u_z,u_x) \ty
\end{align*}
and also
\begin{multline*}
  (u_z:\I(z)), (u_y:\I(y)), \left(v_x : \tprd{u_x:\I(x)} \I(y,x)(u_y,u_x) \to \I(z,x)(u_z,u_x)\right) \\
  \vdash \I(z,y) (u_z,u_y,v_x) \ty.
\end{multline*}
Note that a morphism in $\I(z,y)$ is \emph{indexed by} a function $v_x$ assigning its composites with all morphisms in $\I(y,x)$.
In other words, the composition $\I(y,x) \times_{\I(y)} \I(z,y) \to \I(z,x)$ is encoded by type dependency.

A Reedy fibrant diagram $A$ over such an \I consists of
\begin{align*}
  \Gm, (u_x:\I(x)) &\vdash A_x(u_x) \ty\\
  \Gm, (u_y:\I(y)), \left(v_x : \tprd{u_x:\I(x)} \I(y,x)(u_y,u_x) \to A_x(u_x)\right) &\vdash A_y(u_y,v_x) \ty
\end{align*}
and also
\begin{multline*}
  \Gm, (u_z:\I(z)), \left(v_x : \tprd{u_x:\I(x)} \I(z,x)(u_z,u_x) \to A_x(u_x)\right),\\
  \Big(v_y : \tprd{u_y:\I(y)}{w_x:\tprd{u_x:\I(x)} \I(y,x)(u_y,u_x) \to \I(z,x)(u_z,u_x)}\\
  \I(z,y)(u_z,u_y,w_x) \to A_y(u_y,v_x \circ w_x)\Big)
  \vdash A_z(u_z,v_x,v_y) \ty
\end{multline*}

\subsection{Four objects}
\label{sec:four-objects}

Finally, suppose $\I_0=\{x,y,z,w\}$ with $x\prec y\prec z\prec w$.
Then a fibrant \I consists of
\[ \vdash \I(x) \ty\qquad
  \vdash \I(y) \ty\qquad
  \vdash \I(z) \ty\qquad
  \vdash \I(w) \ty\]
\begin{align*}
  (u_y:\I(y)), (u_x:\I(x)) &\vdash \I(y,x)(u_y,u_x) \ty\\
  (u_z:\I(z)), (u_x:\I(x)) &\vdash \I(z,x)(u_z,u_x) \ty\\
  (u_w:\I(w)), (u_x:\I(x)) &\vdash \I(w,x)(u_w,u_x) \ty
\end{align*}
\begin{multline*}
  (u_z:\I(z)), (u_y:\I(y)), \left(v_x : \tprd{u_x:\I(x)} \I(y,x)(u_y,u_x) \to \I(z,x)(u_z,u_x)\right) \\
  \vdash \I(z,y) (u_z,u_y,v_x) \ty.
\end{multline*}
\begin{multline*}
  (u_w:\I(w)), (u_y:\I(y)), \left(v_x : \tprd{u_x:\I(x)} \I(y,x)(u_y,u_x) \to \I(w,x)(u_w,u_x)\right) \\
  \vdash \I(w,y) (u_w,u_y,v_x) \ty.
\end{multline*}
\begin{multline*}
  (u_w:\I(w)), (u_z:\I(z)), \left(v_x : \tprd{u_x:\I(x)} \I(z,x)(u_z,u_x) \to \I(w,x)(u_w,u_x)\right),\\
  \Big(v_y : \tprd{u_y:\I(y)}{w_x:\tprd{u_x:\I(x)} \I(y,x)(u_y,u_x) \to \I(z,x)(u_z,u_x)}
  \I(z,y)(u_z,u_y,w_x) \\ \to \I(w,y)(u_w,u_y,v_x \circ w_x)\Big)
  \vdash \I(w,z)(u_w,u_z,v_x,v_y) \ty
\end{multline*}
Unsurprisingly, a morphism in $\I(w,z)$ is indexed both by a function $v_x$ assigning its composites with all morphisms in $\I(z,x)$, and a function $v_y$ assigning its composites with all morphisms in $\I(z,y)$.
However, since morphisms in $\I(z,y)$ and $\I(w,y)$ are indexed by their composites with morphisms in $\I(y,x)$, the output type of $v_y$ depends on $v_x$.
In this way, type dependency also encodes the \emph{associativity} of composition.
Note that this associativity is judgmental, corresponding to the categorical assumption that the associativity diagrams commute on the nose in \C (rather than up to homotopy).

\subsection{Equivariant homotopy theory}
\label{sec:equiv-homot-theory}

We end with the motivating class of examples.
Let $G$ be a topological group, and let $\OG$ be its \textbf{orbit category}, whose objects are $G$-spaces of the form $G/H$ for closed subgroups $H\le G$, and whose morphisms are $G$-maps.
There is a map $G/H \to G/K$ in $\OG$ if and only if $H$ is conjugate to a subgroup of $K$.
We regard $\OG$ as topologically enriched, so it presents a small \io-category which we also denote $\OG$.
This is an EI \io-category, as is its opposite; but for a general $G$, neither is inverse EI.
For instance, $G=\lR$ has both infinite ascending and descending chains of subgroups
\[\textstyle \lZ < \frac12\lZ < \frac14\lZ < \frac18\lZ < \cdots\qquad
 \lZ > 2\lZ > 4\lZ > 8\lZ > \cdots. \]

But if $G$ is a (finite-dimensional) \emph{compact Lie} group, then $\OG\op$ is an inverse EI \io-category
(\cite[Examples 1.8(e)]{bm:extn-reedy}),
i.e.\ such a $G$ does not have any infinite descending chain of subgroups; this can be proven by assigning to each subgroup $H$ the ordinal $\omega\cdot \dim(H) + |\pi_0(H)|$.
By~\cite{elmendorf:theorem}, the equivariant homotopy theory of $G$-spaces is equivalent to the pointwise homotopy theory of topological diagrams on $\OG\op$, i.e.\ the \io-category $\ig^{\OG\op}$; thus it models homotopy type theory.

\begin{eg}\label{eg:Cp}
  Let $G=C_p$ be the finite cyclic group with $p$ elements, for $p$ a prime.
  Then $G$ has exactly two subgroups, itself and the trivial one $e$, and in \OG we have
  \begin{alignat*}{2}
    \OG(G/e,G/e) &\cong G &\qquad
    \OG(G/G,G/e) &\cong \emptyset\\
    \OG(G/e,G/G) &\cong 1 &\qquad
    \OG(G/e,G/e) &\cong 1.
  \end{alignat*}
  Thus, $\OG\op$ as a fibrant \sSet-inverse category is an instance of \cref{eg:2obj}, with
  \begin{align*}
    \OG\op(G/e) &= BG \qquad\text{(The classifying space of $G$)}\\
    \OG\op(G/G) &= 1\\
    \OG\op(G/G,G/e) &= BG \qquad\text{(More precisely, the span $BG \ot BG \to 1$)}
  \end{align*}
  A Reedy fibrant $\OG\op$-diagram then consists of a fibration $A_{G/e}\to BG$ together with a fibration $A_{G/G} \to \Pi_{BG} A_{G/e}$, or in the type theory
  \begin{align*}
    \Gm, (u_x : BG) &\vdash A_{G/e}(u_x)\\
    \Gm, \left(v_x : \tprd{u_x:BG} A_{G/e}(u_x)\right) &\vdash A_{G/G}(v_x)
  \end{align*}
  If we regard $A_{G/e}$ as a space with a coherent $G$-action, then $\Pi_{BG} A_{G/e}$ is its space of fixed points.
  Thus, $A$ consists of a type with a $G$-action together with, for each fixed point of this action, a type of ``special reasons'' why that point should be considered fixed (which might be empty).
  That is, in passing from the naive homotopy theory of $G$-spaces to $\OG\op$-diagrams, we make ``being a fixed point'' from a property into data.
\end{eg}

\appendix

\section{On the definition of strong fibration functors}
\label{sec:fibfr}

Here I will sketch how to modify the gluing construction from~\cite{shulman:invdia} for the weaker definition of strong fibration functor from \cref{def:fibfr}.
The assumption that $G$ preserves acyclic cofibrations was used in only two places.
The first is to construct explicit factorizations of the diagonal $A \to P_B A \to A\times_B A$ of a Reedy fibration $A\to B$.
Instead, we can use the ordinary Reedy method of factorization, using the path object $(P_B A)_0 = P_{B_0} A_0$ in \C and then letting $(P_B A)_1$ be a factorization of
\begin{equation}
  A_1 \to  (A_1 \times_{B_1} A_1)\times_{(GA_0\times_{GB_0} GA_0)} G(P_{B_0} A_0)\label{eq:Pf-tofactor}
\end{equation}
as an acyclic cofibration followed by a fibration.
It is convenient to construct such a factorization explicitly as follows.
First let $g$ be a lift in the following square:
\begin{equation*}
  \vcenter{\xymatrix{
      GA_0\ar[r]^{G(\r{})}\ar[d]  &
      G(P_{B_0} A_0)\ar[d]\\
      P_{GB_0} (GA_0)\ar[r] \ar@{.>}[ur]^g &
      GA_0 \times_{GB_0} GA_0
      }}
\end{equation*}
By the 2-out-of-3 property and the fact that $G$ preserves homotopy equivalences, $g$ is a homotopy equivalence.
Factor $g$ as an acyclic cofibration followed by a fibration using the mapping path space construction in the slice category of fibrations over $GA_0 \times_{GB_0} GA_0$ (which is isomorphic to $G(A_0 \times_{B_0} A_0)$).
This produces a factorization
\begin{equation}
  \xymatrix{ P_{GB_0} (GA_0) \ar@<-2mm>[r]_-i & Q \ar@<-2mm>[l]_-{q} \ar[r] & G(P_{B_0} A_0)}\label{eq:Pffact1}
\end{equation}
in which the acyclic cofibration $i$ has a retraction $q$ that is a fibration.
Now use the method of~\cite[\S8]{shulman:invdia} but with $P_{G B_0} (G A_0)$ in place of $G(P_{B_0}A_0)$.
This works because $G A_0 \to P_{G B_0}(G A_0)$ is by assumption (unlike $G A_0 \to G(P_{B_0}A_0)$) an acyclic cofibration, and produces an (acyclic cofibration, fibration) factorization
\begin{equation}
  \xymatrix{ A_1 \ar[r]^j &R \ar[r] & (A_1 \times_{B_1} A_1) \times_{(GA_0 \times_{GB_0} GA_0)} P_{G B_0} (G A_0) .}\label{eq:Pffact2}
\end{equation}
Since~\eqref{eq:Pffact1} lies in the slice over $GA_0 \times_{GB_0} GA_0$, it is preserved by pullback along $A_1 \times_{B_1} A_1 \to GA_0 \times_{GB_0} GA_0$.
Combining this pullback factorization with~\eqref{eq:Pffact2} we have the bottom row and right column of the following diagram:
\begin{equation}
  \vcenter{\xymatrix@C=3pc{
    && (A_1 \times_{B_1} A_1)\times_{(GA_0\times_{GB_0} GA_0)} G(P_{B_0} A_0)\\
    T \ar@<-2mm>[d] \ar[r]^{j'} & S \ar[r] \ar@<-2mm>[d] \ar[ur]^p  & (A_1 \times_{B_1} A_1)\times_{(GA_0\times_{GB_0} GA_0)}  Q \ar@<-2mm>[d]_-{q'} \ar[u]\\
    A_1 \ar[r]^j \ar@<-2mm>[u]_-{i''} &
    R \ar[r] \ar@<-2mm>[u] &
    (A_1 \times_{B_1} A_1)\times_{(GA_0 \times_{GB_0} GA_0)} P_{G B_0} (G A_0)
    \ar@<-2mm>[u]_-{i'}.}}
\end{equation}
We define the objects $S$ and $T$ by pullback of $q'$.
Then $i''$ is an acyclic cofibration by \cref{thm:cat8} (since $q'$ is a fibration), and $j'$ is an acyclic cofibration since it is the pullback of $j$ along the fibration $q'$.
Our desired factorization of~\eqref{eq:Pf-tofactor} is then
\[ \xymatrix{ A_1  \ar[r]^{j' i''} & S \ar[r]^p & (A_1 \times_{B_1} A_1)\times_{(GA_0\times_{GB_0} GA_0)} G(P_{B_0} A_0). } \]
In the internal type theory, this means we take $(P_{B} A)_0$ to be
\begin{equation}
(b_0:B_0),\, (a_0: A_0(b_0)) ,\, (a_0' : A_0(b_0)) \vdash \fId(a_0,a_0') \ty\label{eq:rit0}
\end{equation}
and $(P_{B} A)_1$ to be
\begin{multline}\label{eq:rit1}
  (b_0:G B_0),\, (b_1:B_1(b_0)) ,\, (a_0: GA_0(b_0)) ,\, (a_0' : GA_0(b_0)),\, (p_0: G\fId(a_0,a_0'))\\
  (a_1 : A_1(b_0,b_1,a_0)),\, (a_1' : A_1(b_0,b_1,a_0'))\\
  \vdash
  \sm{p_0':\fId(a_0,a_0')} \fId(g(p_0'),p_0) \times \fId((p_0')_*a_1,a_1')
  \ty
\end{multline}
These explicit path-objects are used in~\cite{shulman:invdia} to show that $(\D\dn G)\f$ inherits a ``cloven structure'' from \C and \D, that ``cloven universes'' in \C and \D can be lifted to $(\D\dn G)\f$, and in the analysis of the univalence axiom in $(\D\dn G)\f$.
For the first two, the exact definition does not matter, only that they can be constructed in the internal type theory; while the third can be performed using~\eqref{eq:rit0} and~\eqref{eq:rit1} instead.
Specifically, the types of $p_1$ in~\cite[\autoref{invdia:fig:sectpath}]{shulman:invdia} and $q_1$ in~\cite[\autoref{invdia:fig:secthtpy}]{shulman:invdia} must be replaced by ones derived instead from~\eqref{eq:rit1} above.
But in the next step, we pull back along the map $GV_0 \to GE_0$ that sets $q_0(b_0)$ to $\r {b_0}$ (thereby identifying $a_0$ with $a_0'$).
With~\eqref{eq:rit0} and~\eqref{eq:rit1}, $G(\r{b_0})$ is no longer the same as $\r{G b_0}$, so we get only
\[ \tsm{p_0':\fId(a_0,a_0)} \fId(g(p_0'),G(\r{})) \times \fId((p_0')_*a_1,a_1') \]
However, since $g$ is an equivalence, this type is equivalent to
\[ \tsm{p_0':\fId(a_0,a_0)} \fId(p_0',g^{-1}(G(\r{}))) \times \fId((p_0')_*a_1,a_1'). \]
And since $\tsm{p_0':\fId(a_0,a_0)} \fId(p_0',g^{-1}(G(\r{})))$ is contractible to $\r{}$, this is equivalent to
$\fId(a_1,a_1')$.
Thus, up to equivalence we get the same reduction as in~\cite{shulman:invdia}, so that the univalence axiom still holds.

There is one more use of the assumption that $G$ preserves acyclic cofibrations, in the proof of~\cite[Proposition \ref{invdia:thm:reedy-afib}]{shulman:invdia}, where we use the fact that $\alpha^*(G(P_{A_0} B_0))$ is a path object for $s:B_{01} \to A_1$ (here $B_{01} = A_1 \times_{GA_0} GB_0$).
Under our weaker assumption, we only know that it is a factorization of $B_{01} \to \alpha^*(G(P_{A_0} B_0)) \to B_{01} \times_{A_1} B_{01}$ as an equivalence followed by a fibration.
We may now factor the equivalence $B_{01} \to \alpha^*(G(P_{A_0} B_0))$ as an acyclic cofibration followed by an acyclic fibration:
\[ B_{01} \to P_{A_1} B_{01} \to \alpha^*(G(P_{A_0} B_0)) \]
to obtain an actual path object for $s$.
Since $P_{A_1} B_{01} \to \alpha^*(G(P_{A_0} B_0))$ is an acyclic fibration, it has a deformation section.
Thus, we can lift the ``homotopy'' $hs \sim 1$ using $\alpha^*(G(P_{A_0} B_0))$ to an actual homotopy $H$ using this actual path object $P_{A_1} B_{01}$ satisfying the same equations.
We can now use $P_{A_1} B_{01}$ in place of $\alpha^*(G(P_{A_0} B_0))$ throughout the rest of the proof.

\bibliographystyle{alpha}
\bibliography{all}

\end{document}